\documentclass[11pt,twoside,reqno]{amsart}
\usepackage{amsfonts}
\usepackage{epsfig,graphics}
\usepackage{amssymb}
\usepackage{amscd}
\usepackage{amsbsy}

\addtolength{\textwidth}{28mm}\addtolength{\textheight}{25mm}
\addtolength{\oddsidemargin}{-16mm}\addtolength{\evensidemargin}{-10mm}
\addtolength{\topmargin}{-15mm}\addtolength{\footskip}{2mm}

\newtheorem{theorem}{Theorem}[section]

\newtheorem{proposition}[theorem]{Proposition}
       
\newtheorem{definition}[theorem]{Definition}

\newtheorem{corollary}[theorem]{Corollary}

\newtheorem{question}{Question}

\theoremstyle{definition}
 \newtheorem{defin}{Definition}[section]
 \newtheorem{remark}{Remark}[section]

\newcommand{\Ad}{\text{Ad }}

\newcommand{\Ker}{\text{ker }}
\newcommand{\im}{\text{im }}

\newcommand{\SL}{\text{SL}}

\newcommand{\BR}{{\bf R}}
\newcommand{\BC}{{\bf C}}

\newcommand{\lieg}{{\mathfrak{g}}}

\newcommand{\liep}{{\mathfrak{p}}}
\newcommand{\lieq}{{\mathfrak{q}}}

\newcommand{\alggl}{{\mathfrak{g}\mathfrak{l}}}
\newcommand{\algsl}{{\mathfrak{s}\mathfrak{l}}}

\newcommand{\dds}{\frac{\mathrm{d}}{\mathrm{d}s}}

\linespread{1.2}
\setlength{\parindent}{0pt}
\setlength{\parskip}{0.9ex}
\begin{document}
\pagenumbering{arabic}
\title[Strongly essential flows on parabolic geometries]{Strongly essential flows on irreducible parabolic geometries}
\author{Karin Melnick and Katharina Neusser}
\date{\today}
\begin{abstract}
We study the local geometry of irreducible parabolic geometries admitting strongly essential flows; these are flows by local automorphisms 
with higher-order fixed points. We prove several new rigidity results, and recover some old ones 
for projective and conformal structures, which show that in many cases the existence of a strongly essential flow implies local flatness of the geometry on
an open set having the fixed point in its closure. For almost c-projective and almost quaternionic structures we can moreover show flatness of 
the geometry on a neighborhood of the fixed point.
\end{abstract}

\maketitle

\section{Introduction}
Irreducible parabolic geometries are a family of differential geometric structures including conformal semi-Riemannian, projective, almost c-projective, almost quaternionic, and almost Grassmannian structures. They are Cartan geometries infinitesimally modeled on homogeneous projective varieties $G/P$, where $G$ is a semisimple Lie group and $P < G$ is a parabolic subgroup with abelian unipotent radical  (see Section \ref{sec.def.note} below for definitions and references). 
This article continues the study, initiated in \cite{cap.me.parabolictrans}, of the local geometry of parabolic geometries in the presence of a flow by \emph{strongly essential automorphisms}. For irreducible parabolic geometries, these are $1$-parameter families $\{ \varphi^t \}$ of local automorphisms fixing a point $x_0$ and having trivial differential at $x_0$. The corresponding vector field $\eta$ is called \emph{strongly essential} and is said to have a \emph{higher-order zero} at $x_0$. 

Nagano and Ochiai proved, using the canonical Cartan connection of a projective structure, that a projective manifold admitting a projective vector field with higher-order zero at $x_0$ is projectively flat in a neighorhood of $x_0$---that is, locally projectively equivalent to projective space \cite{nagano.ochiai.proj}. The analogous result was obtained for conformal semi-Riemannian structures by the first author and Frances in \cite{fm.champsconfs}, although in this case, without assuming real analyticity of the structure, the semi-Riemannian conformal manifold may only be conformally flat on an open set $U$ with the higher-order zero $x_0$ in its closure; for semi-Riemannian conformal structures the open set $U$ can be the interior or exterior of the light cone through $x_0$, or a more complicated open union of semi-cones.

The paper \cite{cap.me.parabolictrans} unifies and generalizes the methods of \cite{nagano.ochiai.proj} and \cite{fm.champsconfs} to general parabolic geometries, 
taking advantage of the relation between parabolic geometries and the representation-theoretic properties of the model pair $(\lieg,P)$ (see also \cite{cap.me.proj.conf}). 
The key product of this relation is the harmonic curvature (see Section \ref{sec.def.note} below), which arises from a completely reducible quotient of the curvature representation of the Cartan geometry, yet is the obstruction to vanishing of the full curvature on an open set. The general results of \cite{cap.me.parabolictrans} associate to the fixed point $x_0$ of a strongly essential flow an explicit, algebraically defined family of curves emanating from $x_0$ and give restrictions, in terms of the algebraic properties of the pair $(\lieg, P)$, on values of the harmonic curvature along these special curves. The general criteria of \cite{cap.me.parabolictrans} are applied to obtain rigidity results for strongly essential flows for a variety of parabolic geometries, including almost-quaternionic, contact parabolic, and strictly pseudoconvex CR structures.  In many cases, however, the methods of \cite{cap.me.parabolictrans} do not give full vanishing of the harmonic curvature along the special curves, but only of some components; moreover, in each case they involve a detailed analysis of the harmonic curvature module.

In this article, we develop new techniques showing vanishing of the harmonic curvature along these special curves, which do not require precise knowledge of the harmonic curvature representation for each type of structure. We establish a general result for irreducible parabolic geometries infinitesimally modeled on $G/P$ with $G$ simple, which shows, given a strongly essential flow, that the harmonic curvature vanishes along the special curves through the fixed point $x_0$, and moreover that the full Cartan curvature vanishes at $x_0$. 
We use this theorem to prove several new rigidity results for irreducible parabolic geometries. In fact, we show in many cases that the existence of a strongly essential flow implies that the geometry is locally flat on an open set $U$ having $x_0 \in \overline{U}$.  In some cases, such as for almost c-projective structures, we can strengthen this local flatness to a neighborhood of $x_0$.  We also recover some of the above mentioned results of \cite{fm.champsconfs} on semi-Riemannian conformal structures and the results of \cite{nagano.ochiai.proj} on projective structures, and improve the results of \cite{cap.me.parabolictrans} on almost quaternionic structures and almost Grassmannian structures of type $(2,n)$.

\subsection{Results}
\label{sec.intro.results}

This section begins with the definitions relevant to our main results.  Background on irreducible parabolic geometries and further definitions are given in Section \ref{sec.def.note} below.  We will assume throughout that $M$ is a smooth, connected manifold endowed with a smooth geometric structure or equivalent smooth Cartan geometry.

For a parabolic subgroup $P$ of a semisimple Lie group $G$, denote by $\liep$ and $\lieg$ their respective Lie algebras.
Let $(B\stackrel{\pi}{\rightarrow}M,\omega)$ be a normal Cartan geometry of irreducible parabolic type $(\lieg, P)$.  We denote by $\mathfrak{inf}(M)$ the algebra of smooth vector fields $\eta \in \mathfrak{X}(M)$ along which the flow $\{ \varphi^t_\eta \} $, where defined, is by automorphisms of $(B \stackrel{\pi}{\rightarrow}M,\omega)$; one need not assume $\eta$ complete. The unique lift of $\eta$ to $B$ is denoted $\tilde{\eta}$.

\begin{defin}
\label{def.isotropy}
Given $\eta \in \mathfrak{inf}(M)$ vanishing at $x_0 \in M$, the \emph{isotropy} of $\eta$ with respect to $b_0 \in \pi^{-1}(x_0)$ is $\omega_{b_0}(\tilde{\eta}) \in \liep$.
\end{defin}

The $P$-equivariance of the Cartan connection $\omega$ implies that a different choice of $b_0 \in \pi^{-1}(x_0)$ yields a conjugate value for the isotropy.  


Remark that $\eta$ is strongly essential if and only if its isotropy at $x_0$ with respect to any $b_0$ belongs to the nilradical $\liep_+$ of $\liep$.  Via the duality $\liep_+ \cong (\lieg / \liep)^*$ of $P$-modules, the isotropy of a strongly essential $\eta$ determines $\alpha \in T_{x_0}^*M$, because $T^*M  \cong B \times_P (\lieg/\liep)^*$.  We will also call $\alpha$ the isotropy of $\eta$ at $x_0$.



Given $b_0 \in B$ and $X \in \lieg$, there is a curve through $b_0$ 
$$ \tilde{\gamma}_X(s) = \exp_{b_0}(sX), \qquad s \in (- \epsilon, \epsilon)$$

defined for some $\epsilon > 0$; here $\exp$ is the Cartan geometry exponential map sending $(b_0, X)$ to the image of $b_0$ under the time-one flow along the $\omega$-constant vector field determined by $X$ (see Definition \ref{def.exp.map} below).  For $\tilde{\gamma}_X$ as above, $\gamma_X = \pi \circ \tilde{\gamma}_X$ is called an \emph{exponential curve} through $x_0 = \pi(b_0)$.


Our general results provide conditions under which the harmonic curvature vanishes along a family of exponential curves through the fixed point $x_0$ of a strongly essential flow, corresponding to the following subset of $\lieg$.  The subspace $\lieg_-$ is a vector space complement to $\liep$ in $\lieg$, as defined as in Section \ref{parabolic_geometries} below.

\begin{defin}
\label{def.counterpart}
For $0\neq Z \in \liep_+$ denote $T(Z)$ the set of $X \in \lieg_-$ such that $Z, A = [Z,X],$ and $X$ form an $\mathfrak{sl}_2$-triple---that is,
$$ [A,Z] = 2Z \qquad \mbox{and} \qquad [A,X] = - 2X$$
\end{defin}   

When $(\lieg,P)$ is of irreducible type, then $T(Z)$ is always nonempty; this is a consequence of the Jacobson-Morozov Theorem and Proposition 2.16 of \cite{cap.me.parabolictrans}, which says that any $X$ generating an $\algsl_2$-triple with $Z$ is equivalent modulo $\liep$ to an element of $T(Z)$.  
Let $Z \in \liep_+$ be the isotropy of a strongly essential $\eta \in \mathfrak{inf}(M)$ with respect to $b_0 \in \pi^{-1}(x_0)$.  Via the isomorphism $\lieg_- \cong \lieg/\liep$ (see Section \ref{adapted_sl2_triples}), and the bundle isomorphism $TM \cong B \times_P \lieg / \liep$, the set $T(Z)$ corresponds to a subset of $T_{x_0} M$, which we denote $T(\alpha)$.  The corresponding collection of exponential curves $\gamma_X$ through $x_0$ is denoted $\mathcal{T}(\alpha)$.  The subsets $T(\alpha)$ and the collection of curves $\mathcal T(\alpha)$ depend only on $\alpha$, and not on the choice of $b_0\in\pi^{-1}(x_0)$.

The Cartan curvature, viewed as a vector-valued function on $B$, will be denoted $\kappa$ below, while the harmonic curvature will be $\hat{\kappa}$ (see Section \ref{sec.def.note} below).  Given a strongly essential flow with fixed point $x_0$ and isotropy $Z$ with respect to $b_0 \in \pi^{-1}(x_0)$, our first general result, Proposition \ref{prop.s.to.the.k}, establishes a polynomial form for $\hat{\kappa}$ along the curves $\{ \tilde{\gamma}_X \,: \, X \in T(Z) \}$ through $b_0$.  This proposition is key in the proof of our main theorem, which follows:

\begin{theorem}
\label{thm.harmonic.vanishing}
Suppose $(B\stackrel{\pi}{\rightarrow} M,\omega)$ is a normal irreducible parabolic geometry of type $(\lieg,P)$ with $\lieg$ simple.  
Let $\eta\in\mathfrak{inf}(M)$ be a nontrivial infinitesimal automorphism with higher-order zero at $x_0\in M$ and isotropy $\alpha \in T_{x_0}^*M$.  Then
\begin{itemize}
\item $\hat\kappa$ vanishes along all curves in $\mathcal{T}(\alpha).$
\item $\kappa(x_0) = 0.$
\end{itemize}
\end{theorem}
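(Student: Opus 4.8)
The plan is to combine the automorphism-invariance of $\hat\kappa$ with the way the flow $\{\varphi^t_\eta\}$ acts on the special exponential curves, the interaction being governed by the $\algsl_2$-triple $(Z,A,X)$ of Definition \ref{def.counterpart}. First I would record the output of Proposition \ref{prop.s.to.the.k}: for $X\in T(Z)$ the function $s\mapsto\hat\kappa(\tilde\gamma_X(s))$ is polynomial, and I would organize its coefficients by their eigenvalue under the neutral element $A$, writing $\hat\kappa\circ\tilde\gamma_X=\sum_w P_w$ with $P_w(s)\in W_w$, where $W=\bigoplus_w W_w$ is the $A$-eigenspace decomposition of the harmonic module $W=H^2(\lieg_-,\lieg)$.

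Next comes the dynamical input. Since $\eta\in\mathfrak{inf}(M)$, the flow $\tilde\varphi^t$ of $\tilde\eta$ preserves $\omega$, hence the $\omega$-constant fields, so the function $\hat\kappa$ is $\tilde\varphi^t$-invariant. On the model the group $\exp(tZ)$ acts on $\gamma_X$ by the M\"obius reparametrization $s\mapsto s/(1+ts)$ (the $\algsl_2$ computation $\exp(tZ)\exp(sX)\in\exp(\tfrac{s}{1+ts}X)P$), the vertical part being $\exp(\log(1+ts)A)$ modulo $\liep_+$. I would establish the curved analogue $\tilde\varphi^t(\tilde\gamma_X(s))=\tilde\gamma_X\bigl(\tfrac{s}{1+ts}\bigr)\cdot p(t,s)$ for a $P$-valued cocycle whose reductive part is $\exp(\log(1+ts)A)$; here Proposition \ref{prop.s.to.the.k} is precisely what guarantees that the curvature corrections do not spoil this normal form. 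Because $\liep_+$ acts trivially on the completely reducible module $W$, combining invariance with equivariance yields, weight by weight, the functional equation
$$(1+ts)^{w}\,P_w(s)=P_w\!\left(\frac{s}{1+ts}\right),$$
valid for all small $s$ and all admissible $t$.

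Solving this—equivalently, letting $t\to+\infty$ so that $s/(1+ts)\to0$ and invoking continuity of $\hat\kappa$ near $b_0$—forces $P_w\equiv0$ for $w>0$, forces $P_0$ to equal the constant $\hat\kappa(b_0)$, and permits only the monomial $P_w(s)=C_w\,s^{-w}$ when $w<0$. The first bullet is thereby reduced to the representation-theoretic claim that the neutral element $A$ of every triple $(Z,A,X)$, with $0\neq Z$ an isotropy and $X\in T(Z)$, acts on $W$ with only strictly positive weights, so that the residual weight-$\le 0$ pieces simply do not occur. This is the step I expect to be the main obstacle: it is exactly here that simplicity of $\lieg$ enters, and I would prove it from Kostant's description of $H^2(\lieg_-,\lieg)$ as generated by distinguished lowest-weight vectors, pairing those weights against $A$; should a single triple not suffice, one combines the constraints from all $X\in T(Z)$ and, varying $b_0$, from the full $P$-orbit of $Z$, whose span is—by simplicity—large enough to leave no nonzero common weight-$\le 0$ vector.

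Finally, for $\kappa(x_0)=0$ I would work at $b_0=\tilde\gamma_X(0)$. Vanishing of $\hat\kappa$ along the curves gives $\hat\kappa(b_0)=0$; since for a normal geometry the lowest nonzero homogeneous component of $\kappa$ (with respect to the grading element of $\liep$) is harmonic—$\partial^*$-closed by normality and $\partial$-closed by the Bianchi identity in lowest degree—this already kills the bottom component of $\kappa(b_0)$. To remove the higher components, where the pointwise harmonic projection loses information, I would rerun the invariance argument for the full curvature function $\kappa$, on which $\liep_+$ now acts nontrivially: flow-invariance makes $\kappa(b_0)$ fixed by $\exp(tZ)$, while applying $\tilde\varphi^t$ with $t\to+\infty$ expresses the fixed value $\kappa(\tilde\gamma_X(s))$ through an expanding $\Ad$-cocycle applied to the convergent, hence bounded, quantity $\kappa(\tilde\gamma_X(\tfrac{s}{1+ts}))\to\kappa(b_0)$. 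Boundedness then annihilates every $A$-expanding component, and together with the $Z$-fixedness and the already established vanishing of the harmonic (lowest) part this forces $\kappa(b_0)=0$. The delicate point here is the bookkeeping for the unipotent $\liep_+$-part of $p(t,s)$, which mixes weights and must be controlled via the polynomial dependence on $t$ supplied by nilpotency of $Z$ and Proposition \ref{prop.s.to.the.k}.
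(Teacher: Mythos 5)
Your scaffolding (the polynomial form from Proposition \ref{prop.s.to.the.k}, the M\"obius reparametrization, the weight-by-weight functional equation $(1+ts)^{w}P_w(s)=P_w\bigl(\tfrac{s}{1+ts}\bigr)$, and its consequences: $P_w\equiv 0$ for $w>0$, $P_0\equiv\hat\kappa(b_0)$, $P_w(s)=C_ws^{-w}$ for $w<0$) agrees with the paper's setup. But the step you yourself flag as the main obstacle is a genuine gap, and it cannot be repaired in the form you propose. You reduce the first bullet to the claim that every neutral element $A=[Z,X]$ acts on the harmonic module $\widehat{\mathbb W}$ with strictly positive weights. This is false in general: the paper states explicitly at the end of Section \ref{sec.strongly.ess.flows} that in many cases $\widehat{\mathbb W}^{[ss]}(A)$ is nontrivial, and this failure is precisely why the present paper goes beyond \cite{cap.me.parabolictrans} --- if your weight-positivity claim held, the theorem would already follow from \eqref{tau_along_special_curves} with no polynomial analysis, which is the (only partially successful) strategy of that earlier paper. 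Concretely, Proposition \ref{harmonic_curvature_along_distinguished_curves1} must allow a priori nonzero coefficients $w_1^{[-1]}\in\widehat{\mathbb W}_1^{[-1]}$, $w_1^{[-2]}\in\widehat{\mathbb W}_1^{[-2]}$, $w_2^{[-1]}\in\widehat{\mathbb W}_2^{[-1]}$; these negative-weight spaces are nonzero, e.g., for null isotropy in pseudo-Riemannian conformal geometry or rank-one isotropy in almost Grassmannian geometry (exactly the cases of Corollary \ref{Cor_C(Z)_max}). Your fallback --- intersecting constraints over all $X\in T(Z)$ and over the $P$-orbit of $Z$ --- does not close the gap: the coefficients $P_w$ with $w<0$ are attached to an individual curve, so a constraint produced along $\tilde\gamma_{X'}$ says nothing about the coefficients along $\tilde\gamma_X$ for $X'\neq X$. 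The only object shared by all curves is the value $\hat\kappa(b_0)$, and even its vanishing is not a pure weight statement in the paper: it is derived from the geometric identity $Z\kappa(b_0)=0$ (flow-invariance of $\kappa$ on the fixed fiber), which places the values of $\kappa(b_0)$ in $\mathfrak z(Z)$, combined with Proposition \ref{eigenspaces_on_curvature}(a) and Corollary \ref{intersection}. That corollary, not Kostant's weight computation, is where simplicity of $\lieg$ actually enters (an ideal argument showing that the $-2$-eigenspaces, as $X$ ranges over $T(Z)$, span $\lieg_{-1}$).

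What is missing in substance is the coupling between $\hat\kappa$ and the full curvature $\kappa$, which is what actually eliminates the surviving negative-weight coefficients. The paper's proof runs: the Bianchi identity gives $\kappa_1=\hat\kappa_1$, so the polynomial form of $\hat\kappa_1$ becomes a statement about $\kappa$; the holonomy path $a_t(s)$ based at the shifted points $\tilde\gamma_X(s)e^{-\frac{1}{s}Z}$ forces $e^{\frac{1}{s}Z}\kappa(\tilde\gamma_X(s))$ into $\mathbb W^{[st]}(A)$, and the cancellation of the unstable term $Zw_1^{[-1]}$, after varying $X$ and applying Corollary \ref{intersection}, yields both $\kappa_2(b_0)=0$ and $w_1^{[-1]}=0$; then the fundamental-derivative identities of Proposition \ref{derivatives_of_curvature_at_b0} (flow-invariance of $\kappa$ at $b_0$ differentiated to second order along $\tilde\gamma_X$) give $\kappa_3(b_0)=0$ and $w_1^{[-2]}=0$; finally $w_2^{[-1]}=0$ uses the splitting $\ker(\partial^*)=\widehat{\mathbb W}\oplus\im\partial^*$. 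Your sketch for the second bullet gestures at fragments of this mechanism (the $Z$-fixedness of $\kappa(b_0)$, the expanding cocycle), but it defers exactly the unipotent bookkeeping --- the $e^{\frac{1}{s}Z}$ corrections, whose $\tfrac1s$ and $\tfrac1{s^2}$ terms blow up as $s\to 0$ and block any direct limit argument at $b_0$ --- that constitutes the real content of Steps 1--2; and in any case it presupposes the first bullet, whose proposed proof rests on the false weight-positivity claim. So the proposal, as written, would fail precisely for the geometries and isotropy types that are the paper's main new cases.
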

The equivariant functions $\kappa$ and $\hat{\kappa}$ correspond to sections of associated vector bundles.  The expression $\kappa(x_0) = 0$ denotes vanishing of this section at $x_0$, and similarly for $\hat{\kappa}$.

We apply Theorem \ref{thm.harmonic.vanishing} to obtain two general results.  The first, Proposition \ref{Case_isolated_fixed_point}, establishes that in the presence 
of a strongly essential flow with \emph{smoothly isolated} higher-order fixed point (see Definition \ref{defin_isol_zero} below) the curvature always vanishes on a nonempty open set with the fixed point in its closure.  Proposition \ref{Case_isolated_fixed_point} applies in particular to any strongly essential flow of a projective, almost c-projective, or almost quaternionic structure.
As for projective structures in \cite{nagano.ochiai.proj}, we can improve the curvature vanishing to a neighborhood of the fixed point for these latter two structures; for example:

\begin{theorem}
\label{thm.almost.cproj}
Let $M^{2n}, n \geq 2,$ be endowed with a smooth almost c-projective structure.  Suppose $0 \neq \eta \in \mathfrak{inf}(M)$ is a c-projective vector field with a 
higher-order zero at $x_0 \in M$.  Then there exists a neighborhood of $x_0$ on which $M$ is locally c-projectively flat---that is, locally isomorphic to ${\bf CP}^n$
equipped with its standard c-projective structure.
\end{theorem}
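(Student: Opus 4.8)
The plan is to derive Theorem \ref{thm.almost.cproj} from Theorem \ref{thm.harmonic.vanishing} and Proposition \ref{Case_isolated_fixed_point}, and then to upgrade the flatness they provide from an open set having $x_0$ in its closure to an honest neighborhood of $x_0$, following the strategy of Nagano and Ochiai in \cite{nagano.ochiai.proj}. First I would verify the hypotheses: an almost c-projective structure on $M^{2n}$ is a normal irreducible parabolic geometry of type $(\lieg,P)$ with $\lieg=\mathfrak{sl}(n+1,\BC)$, which is simple as a real Lie algebra, and a c-projective vector field is exactly an element of $\mathfrak{inf}(M)$; a higher-order zero at $x_0$ means that the isotropy lies in $\liep_+$, so $\eta$ is strongly essential with some nonzero isotropy $\alpha\in T^*_{x_0}M$. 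For $n\geq 2$ the harmonic curvature $\hat\kappa$ of an almost c-projective structure has two irreducible components, the Nijenhuis torsion and the c-projective Weyl curvature, and $\hat\kappa$ vanishes on an open set precisely when the structure is locally c-projectively flat there, i.e.\ locally isomorphic to ${\bf CP}^n$. Theorem \ref{thm.harmonic.vanishing} then gives $\kappa(x_0)=0$ together with the vanishing of $\hat\kappa$ along every curve of $\mathcal T(\alpha)$, and, since Proposition \ref{Case_isolated_fixed_point} applies to strongly essential flows of almost c-projective structures, we obtain an open set $U$ with $x_0\in\overline U$ on which $M$ is c-projectively flat.

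The crux is the passage from flatness on $U$ to flatness on a full neighborhood, and this is where the structural difference between c-projective and conformal geometry enters. Via $\liep_+\cong(\lieg/\liep)^*$ the isotropy is a nonzero complex-linear functional $\alpha$ on $T_{x_0}M\cong\BC^n$, and a short computation with the grading shows that $T(\alpha)$, as in Definition \ref{def.counterpart}, is the complex affine hyperplane $\{X:\alpha(X)=1\}$; consequently the directions of the curves in $\mathcal T(\alpha)$ fill only the real hypersurface $\{Y:\alpha(Y)\in\BR^*\}$, whose complement within a punctured neighborhood is governed by the complex hyperplane $\{\alpha=0\}$ of real codimension two. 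This is exactly where c-projective behaviour diverges from the semi-Riemannian conformal case of \cite{fm.champsconfs}, in which the corresponding degenerate locus is the null cone: a real hypersurface that separates a neighborhood into the components on which one gets flatness, explaining why there one obtains only a cone-bounded open set.

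To complete the upgrade I would develop the flat region $U$ into ${\bf CP}^n$, extend $\eta$ to the global projective field on ${\bf CP}^n$ generated by the nilpotent isotropy $Z$, and use that $\{\varphi^t_\eta\}$ is intertwined with $\exp(tZ)$ and preserves the (open) flat locus. Because the degenerate locus of this flow is the complex hyperplane $\{\alpha=0\}$ of real codimension two, it does not disconnect a punctured neighborhood of $x_0$; combining the openness and flow-invariance of the flat locus with the vanishing of $\hat\kappa$ along $\mathcal T(\alpha)$, I would argue that the developing map extends across this codimension-two set, so that $\hat\kappa$ vanishes on an entire punctured neighborhood of $x_0$. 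The equality $\kappa(x_0)=0$ then fills in the point, yielding flatness on a genuine neighborhood.

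The step I expect to be the main obstacle is precisely this last propagation: ruling out nonflat points accumulating at $x_0$ from within the complex hyperplane $\{\alpha=0\}$, along which the curves of $\mathcal T(\alpha)$ give no direct information. Showing that the flat locus is both open and relatively closed in the punctured neighborhood---equivalently, that the developing map extends across the real-codimension-two degenerate set by a removable-singularity argument---is the technical heart, and is exactly what makes almost c-projective structures behave like projective ones, giving a full neighborhood rather than the cone-bounded region of the conformal case.
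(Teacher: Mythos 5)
Your reduction to the ``upgrade'' step matches the paper's: almost c-projective structures are normal irreducible parabolic geometries of type $(\algsl(n+1,\BC),P)$ with $\algsl(n+1,\BC)$ simple as a real Lie algebra, there is a single geometric type of strongly essential isotropy with $C(\alpha)=\{0\}$, and Theorem \ref{thm.harmonic.vanishing} gives $\hat\kappa(x_0)=0$ together with vanishing along $\mathcal T(\alpha)$. You also correctly compute that $T(\alpha)$ is a complex affine hyperplane and that the curves of $\mathcal T(\alpha)$ sweep only the real-codimension-one set of directions $\{V:\alpha(V)\in\BR^*\}$. However, your mechanism for the upgrade has a genuine gap, located exactly where you predicted the ``technical heart'' would be. The region where flatness is unknown after applying Theorem \ref{thm.harmonic.vanishing} and Proposition \ref{Case_isolated_fixed_point} is not the complex hyperplane $\{\alpha=0\}$ of real codimension two: it is the set of directions $\{V:\alpha(V)\in\BC\setminus\BR\}$, which is open and dense near $x_0$ (minus whatever portion the unspecified open set $U$ happens to cover). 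So there is nothing for a removable-singularity argument to extend across; conversely, if you did know vanishing of $\hat\kappa$ off a set of codimension two, you would be done immediately by continuity of $\hat\kappa$ and the relation $S(\hat\kappa)=\kappa$, with no developing map needed. The flow-invariance argument cannot be repaired as stated either: flow-invariance of the flat locus only helps if a point $\exp_{x_0}(V)$ with $\alpha(V)\notin\BR$ can be shown to enter $U$ under $\varphi^t_\eta$, and this requires knowing both where the actual (not model) flow sends such points and that $U$ contains an entire attracting basin---neither is available, since Proposition \ref{Case_isolated_fixed_point} provides only \emph{some} open set with $x_0\in\overline U$.

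What is actually needed---and what the paper supplies in Proposition \ref{prop.complex.multiples}---is an extension of the holonomy argument of \cite{cap.me.parabolictrans} from real to \emph{complex} multiples of $X\in T(Z)$. Because $\lieg=\algsl(n+1,\BC)$ has a complex structure, the $\algsl_2$-triple $\{X,A,Z\}$ generates a copy of $\algsl(2,\BC)$, and the $SL(2,\BC)$ identity \eqref{decomposition_SL2} yields $e^{tZ}e^{svX}=e^{c_t(s)X}a_t(s)u_t(s)$ with $c_t(s)=sv(1+stv)^{-1}$ for every $v\in\BC$. Transferring this identity from $G$ to $B$ is the nontrivial dynamical step: for $v\notin\BR$ the curve $s\mapsto e^{c_t(s)X}$ is no longer a reparametrization of a single real exponential curve, so the paper solves a family of nonautonomous ODEs in the trivialization $TB\cong B\times\lieg$, using Picard--Lindel\"of and a uniform arc-length estimate to show the solutions $\beta_t$ exist on a uniform interval and converge to $b_0$, making $a_t(s)$ a holonomy path with attractor $b_0$. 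This gives $\hat\kappa(\exp(b_0,svX))\in\widehat{\mathbb W}^{[ss]}(A)=\{0\}$ for all $v\in\BC$, and since the nonzero complex multiples of $T(Z)$ fill $\lieg_{-1}\setminus\ker(Z)$---an open dense set of directions---continuity of $\hat\kappa$ gives vanishing on a full neighborhood. Your codimension-two observation is indeed the right explanation for why a full neighborhood is attainable here but not in the conformal case, but the proof runs through this complex-holonomy input, not through developing maps or removable singularities.
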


We obtain the analogous result for almost quaternionic structures in Theorem \ref{thm.almost.quat} below.


Proposition \ref{C(Z)_max_prop}  treats strongly essential flows at the other extreme from isolated zeros, rather with maximal \emph{strongly fixed sets} (see Definition \ref{defin_isol_zero} below).  For these, we show that the curves in $\mathcal T(\alpha)$, along which the curvature vanishes by Theorem \ref{thm.harmonic.vanishing}, always fill up a nonempty open set.  We thus obtain new rigidity results for almost Grassmannian, almost Lagrangean and almost spinorial structures (see 
Corollary \ref{Cor_C(Z)_max}); combining Proposition \ref{Case_isolated_fixed_point} and Corollary \ref{Cor_C(Z)_max} we obtain for example:

\begin{theorem}  
\label{thm.almost.grassmannian}
Let $M$ be endowed with a smooth $(2,n)$-almost Grassmannian structure, $n \geq 2$.  Suppose that $0 \neq \eta \in \mathfrak{inf}(M)$ has a higher-order zero 
at $x_0 \in M$.  Then there is an open subset $U \subset M$ with $x_0 \in \overline{U}$ on which $M$ is locally flat---that is, locally equivalent to 
the Grassmannian variety $\mathrm{Gr}(2,n+2)$. 
\end{theorem}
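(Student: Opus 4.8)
The plan is to combine the two general results mentioned in the excerpt—Proposition \ref{Case_isolated_fixed_point} (the isolated-zero case) and Corollary \ref{Cor_C(Z)_max} (the maximal strongly-fixed-set case)—and show that for a $(2,n)$-almost Grassmannian structure every possible isotropy $\alpha$ falls into the scope of one or the other. The structure is modeled on $G/P$ with $\lieg = \mathfrak{sl}(n+2)$ and $P$ the stabilizer of a $2$-plane, so $\lieg$ is simple and Theorem \ref{thm.harmonic.vanishing} applies: given the strongly essential $\eta$ with higher-order zero at $x_0$ and isotropy $\alpha$, the harmonic curvature $\hat\kappa$ vanishes along all curves in $\mathcal T(\alpha)$ and $\kappa(x_0)=0$. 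The goal is to upgrade this vanishing-along-curves to vanishing on a nonempty open set with $x_0$ in its closure.

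First I would set up the relevant algebra. Writing the isotropy $Z \in \liep_+$ in terms of its rank as a nilpotent element of $\mathfrak{sl}(n+2)$—equivalently, by the $P$-module identification $\liep_+ \cong (\lieg/\liep)^* \cong \Hom(\BR^2, \BR^n)$ (a $2\times n$ matrix block), the rank $r$ of $Z$ as a linear map. The two extreme invariants are the rank-one case and the full-rank case. I expect the rank of $Z$ to dichotomize the analysis: when $Z$ has maximal rank the strongly fixed set is as large as possible and Corollary \ref{Cor_C(Z)_max} applies, giving that the exponential curves $\mathcal T(\alpha)$ fill a nonempty open set; when $Z$ has rank one the zero at $x_0$ is smoothly isolated and Proposition \ref{Case_isolated_fixed_point} applies directly. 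The key structural fact to verify is that for $(2,n)$-Grassmannian type these two cases exhaust all conjugacy classes of nonzero $Z \in \liep_+$, or at least that every intermediate-rank case can be subsumed under one of the two propositions after restricting attention to an appropriate subvariety.

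With the algebraic trichotomy in hand, the analytic step runs as follows. In whichever regime $\alpha$ lies, Theorem \ref{thm.harmonic.vanishing} gives $\hat\kappa \equiv 0$ along the curves of $\mathcal T(\alpha)$, and by either Proposition \ref{Case_isolated_fixed_point} or Corollary \ref{Cor_C(Z)_max} these curves sweep out an open set $U$ with $x_0\in\overline U$. On $U$ the harmonic curvature vanishes identically. The final implication uses the central fact recalled in the introduction: for parabolic geometries $\hat\kappa$, though arising from a completely reducible quotient, is the \emph{complete obstruction} to local flatness—vanishing of $\hat\kappa$ on an open set forces vanishing of the full Cartan curvature $\kappa$ there, hence the geometry is locally equivalent to the homogeneous model $\mathrm{Gr}(2,n+2)$ on $U$. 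This yields precisely the claimed conclusion.

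The main obstacle I anticipate is the rank-classification step for the isotropy together with confirming that each conjugacy class lands cleanly in the hypotheses of one of the two propositions. In particular, the notions of \emph{smoothly isolated} zero and \emph{maximal strongly fixed set} are defined via the dynamics of the flow $\{\varphi_\eta^t\}$ and the geometry of the fixed-point set, not purely algebraically, so I would need to translate the algebraic rank of $Z$ into these dynamical conditions. For intermediate ranks (when $2 < n$ permits $1 < r < 2$ this is vacuous, but one must still handle $r=2$ against $r=1$) the delicate point is checking that the set $T(Z)$ of $\mathfrak{sl}_2$-counterparts, and hence the family $\mathcal T(\alpha)$, genuinely fills an open set rather than a lower-dimensional subvariety; this requires a dimension count comparing $\dim T(Z)$ against $\dim T_{x_0}M = 2n$. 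I would carry out that dimension count explicitly for each rank class to certify that the hypotheses of Corollary \ref{Cor_C(Z)_max} are met, which is where the special feature $\dim\BR^2 = 2$ of the $(2,n)$ case does the real work.
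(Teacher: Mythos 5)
Your overall strategy is exactly the paper's: for a $(2,n)$-almost Grassmannian structure, identified with a normal parabolic geometry of type $(\algsl(n+2,\BR),P(2))$, the nonzero isotropy $Z\in\liep_+\cong \textrm{L}(\BR^n,\BR^2)$ has rank $1$ or $2$, and each case is handled by one of the two general results. But you have attached the two ranks to the \emph{wrong} results, and this is not a labeling slip---it inverts the algebra. The rank of $Z$ and the size of the commutant $C(Z)=\{X\in\lieg_{-1} : ZX=0,\ XZ=0\}$ vary oppositely. If $\mbox{rk}(Z)=2$, then $\textrm{im}(Z)=\BR^2$, so $XZ=0$ forces $X=0$; hence $C(Z)=\{0\}$, the zero is \emph{smoothly isolated}, and it is Proposition \ref{Case_isolated_fixed_point}, via Corollary \ref{Cor_isolated_zero}(2) (which explicitly requires $\mbox{rk}_{\bf K}(\alpha)=p$), that applies. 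If instead $\mbox{rk}(Z)=1$, then $C(Z)\cong \textrm{L}(\BR^2/\textrm{im}(Z),\ker(Z))$ has dimension $n-1>0$: the zero is \emph{not} smoothly isolated; this is the maximally degenerate case, in which $\lieg_{-1}^{[-2]}=\BR X$ is one-dimensional and Proposition \ref{C(Z)_max_prop} / Corollary \ref{Cor_C(Z)_max}(1) (which requires $\mbox{rk}(\alpha)=1$) applies. As written, both branches of your argument fail at precisely the verification step you flag as the ``main obstacle'': rank-one isotropy does not give an isolated zero, and full-rank isotropy gives trivial $C(Z)$ and a $\lieg_{-1}^{[-2]}$ of dimension $4$, so neither hypothesis you invoke is satisfied in the case to which you assign it.

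A secondary inaccuracy: in the smoothly isolated (rank $2$) case, the mechanism is not that the curves of $\mathcal{T}(\alpha)$ sweep out an open set. Rather, $C(Z)=\{0\}$ forces $\lieg_{-1}=\lieg_{-1}^{[-2]}\oplus\lieg_{-1}^{[-1]}$, hence $\mathbb{W}^{[ss]}(A)=\{0\}$; combined with $\hat\kappa(x_0)=0$ from Theorem \ref{thm.harmonic.vanishing}, open-set flatness comes from Corollary 2.14 of \cite{cap.me.parabolictrans}. Only in the rank-one case does the argument run by showing that the map $(Y,X,t)\mapsto \pi(\exp(\exp(b_0,Y),tX))$ on $C(Z)\times T(Z)\times\BR$ has surjective differential, so that the special curves emanating from the strongly fixed set fill an open set with $x_0$ in its closure; this is where your proposed dimension count genuinely lives. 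Once the two cases are matched to the correct results, your final step---vanishing of $\hat\kappa$ on an open set implies vanishing of $\kappa$ there, hence local equivalence to $\mathrm{Gr}(2,n+2)$---is exactly the paper's conclusion.
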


\subsection{Structure of the article}

Section \ref{sec.def.note} provides some background on irreducible parabolic geometries and summarizes the relevant results of \cite{cap.me.parabolictrans} on strongly essential flows.  The main result of Section \ref{sec.polynomial} is Proposition \ref{prop.s.to.the.k}, which estabishes a polynomial form for the harmonic curvature along the curves in $\mathcal{T}(\alpha)$.  Theorem \ref{thm.harmonic.vanishing} is proved in Section \ref{sec.main.proof}; the proof uses Proposition \ref{prop.s.to.the.k} and an analysis of the decomposition of $\lieg$ into irreducible components under the action of the $\algsl_2$-subalgebras of $\lieg$ determined by $T(\alpha)$.  
Section \ref{sec.applications} starts with a presentation of all the significant examples of irreducible parabolic geometries.  We then prove Propositions \ref{Case_isolated_fixed_point} and \ref{C(Z)_max_prop}, which lead to new rigidity results for a variety of geometries and strongly essential flows in Corollaries \ref{Cor_isolated_zero} and \ref{Cor_C(Z)_max}. Theorems \ref{thm.almost.cproj} and \ref{thm.almost.quat} are also proved in this section.
The paper concludes with a counter example to our rigidity results for higher-graded parabolic geometries, due to Kruglikov and The, and with the statement of some open questions.
\\\\
{\bf{Acknowledgments}}
\\We would like to thank Andreas \v Cap, Andre Chatzistamatiou, Michael Eastwood, Boris Kruglikov, Colleen Robles, and Dennis The for helpful discussions and comments.  Melnick was partially supported during work on this project by a
Centennial Fellowship from the American Mathematical Society and by NSF grants DMS-1007136 and 1255462.
 We thank the Mathematical Sciences Institute (MSI) at ANU for financially supporting with an MSRVP grant the visit of the first author to MSI in 2014. 

\section{Higher-order zeroes of irreducible parabolic geometries}
\label{sec.def.note}

We first briefly review some background on parabolic Cartan geometries. 
The reader is referred to \cite{sharpe} for the definition and basic examples of Cartan geometries.  This section briefly presents material on parabolic Cartan geometries as it will be used below.  The comprehensive reference on parabolic geometries is \cite{cap.slovak.book.vol1}.
Section \ref{sec.strongly.ess.flows} recalls some background on higher-order zeroes 
of infinitesimal automorphisms of parabolic geometries and the techniques from \cite{cap.me.parabolictrans} to study 
the local geometry around higher-order zeros.

\subsection{Parabolic geometries}\label{parabolic_geometries}
A convenient definition of parabolic subalgebras in semisimple Lie algebras is based on \cite[Lem 4.2]{Grothendieck.parab.subalgebra}. It reads as follows and is stated in this form in \cite{calderbank.diemer.soucek.ricci}:
\begin{defin}
Suppose $\lieg$ is a real or complex semisimple Lie algebra. A subalgebra $\liep \subset \lieg$ is called a \emph{parabolic subalgebra}
if the orthogonal complement $\liep^{\perp}$ of $\liep$ in $\lieg$ with respect to the Killing form coincides with the nilradical $\liep_+$ of 
$\liep$.
\end{defin}

It follows that the quotient $\liep/ \liep^{\perp}=\liep/ \liep_+$ is reductive, which is called
the \emph{Levi factor} of $\liep$ and is denoted $\lieg_0$, and that the Killing form induces an isomorphism 
$\liep_+\cong(\lieg/\liep)^*$ of $\liep$-modules. We will use this isomorphism without further mention 
to identify these two $\liep$-modules. Note that if $\lieg$ has nontrivial parabolic subalgebras, it is necessarily of non-compact type.


Let $k\geq 1$ be the degree of nilpotency of the nilpotent Lie algebra $\liep_+$. The 
lower central series of $\liep_+$ then equips $\lieg$ with the structure of a filtered Lie algebra
\begin{equation}\label{filtration}
\lieg=\lieg ^{-k}\supset \cdots \supset \lieg^{-1}\supset \lieg^0\supset \lieg^{1}\supset \cdots \supset\lieg^k \quad\quad [\lieg^i, \lieg^j] \subseteq \lieg^{i+j} \ \forall \ i,j\in\bold{Z},
\end{equation}
where $\lieg^1=\liep_+$, $\lieg^i=[\lieg^{i-1}, \liep_+]$ for $i\geq 2$ and $\lieg^{-j+1}=(\lieg^{j})^\perp$ for $j\geq 1$. Note that $\lieg^0=\liep$, whence
the filtration \eqref{filtration} is in particular $\liep$-invariant. If $\liep_+$ is abelian, the filtration takes the simple form 
$\lieg=\lieg^{-1}\supset \lieg^0\supset \lieg^{1}$ with $\lieg^1=\liep_+$.

The associated graded Lie algebra of the filtered Lie algebra \eqref{filtration} is a \emph{$|k|$-graded Lie algebra} 
\begin{equation}\label{associated_graded}
\textrm{gr}(\lieg)=\lieg_{-k}\oplus \cdots \oplus \lieg_0\oplus \cdots \oplus\lieg_k \quad\quad [\lieg_i, \lieg_j] \subseteq \lieg_{i+j} \ \forall \ i,j\in\bold{Z}
\end{equation}
where $\lieg_i= \lieg^{i}/\lieg^{i+1}$, and $\lieg_{-1}$ generates the subalgebra $\lieg_-=\oplus_{i\geq 1}\lieg_{-i}$.

It is easy to see that there exists a unique element $E_0\in\textrm{gr}(\lieg)$, called the \emph{grading element} of $\textrm{gr}(\lieg)$, 
such that $\textrm{ad}(E)$ acts by multiplication by $i$ on $\lieg_{i}$ for $-k\leq i\leq k$ (see \cite{cap.slovak.book.vol1}). Note that $E_0$ must 
lie in the center $\mathfrak z(\lieg_0)$ of $\lieg_0$. 

\begin{remark}
The filtration \eqref{filtration} is split, and a choice of such a splitting gives an identification of $\lieg$ with $\textrm{gr}(\lieg)$. There is however no canonical splitting. 
In \cite{calderbank.diemer.soucek.ricci} splittings of \eqref{filtration}
are called \emph{algebraic Weyl structures}. 
Without further mention,  we assume in this article that for any parabolic pair $(\lieg,\liep)$, an algebraic Weyl structure is fixed, so we have fixed 
an identification $\lieg \cong \textrm{gr}(\lieg)$. The results of this article are clearly independent of such a choice.
\end{remark}

Suppose $G$ is a real or complex semisimple Lie group with Lie algebra $\lieg$, and let $\liep$ be a parabolic subalgebra of $\lieg$.
Then any subgroup $P< G$ with Lie algebra $\liep$ is isogeneous to the stabilizer in $G$ of the filtration \eqref{filtration} under 
the adjoint representation $\textrm{Ad}$ of $G$ and is called a \emph{parabolic subgroup}.
We write $P_+=\exp(\liep_+)$ for the Ad-unipotent radical of $P$, and $G_0=P/P_+$ for the Levi factor. 
Having fixed a splitting of \eqref{filtration}, we can identify $G_0$ with the subgroup of $P$ preserving the grading on $\lieg\cong\textrm{gr}(\lieg)$.

\begin{defin} Let $P$ be a parabolic subgroup of a real or complex semisimple Lie group $G$. 
A \emph{parabolic geometry} of type $(\lieg,P)$ on a manifold $M$  is a real, smooth Cartan geometry $(B\stackrel{\pi}{\rightarrow}M,\omega)$ of type 
$(\lieg,P)$ on $M$.
\end{defin}

The homogeneous space $G/P$ equipped with the Maurer--Cartan form $\omega_G$ of $G$ is called the \emph{homogeneous model} of 
Cartan geometries of type $(\lieg, P)$. Assuming $G$ is an algebraic group, $P$ is a cocompact algebraic subgroup of $G$, so that $G/P$ is 
a closed projective variety. 



Representations of $P$ such that $P_+$ acts trivially are in bijective correspondence with representations of $G_0=P/P_+$.
A representation $\mathbb V$ of $P$ is completely reducible if and only if $P_+$ acts trivially on $\mathbb V$ and $\mathbb V$ is completely reducible as $G_0$-module.  Since $G_0$ is reductive, this last condition holds if and only if the center of $G_0$ acts by a character on $\mathbb V$. We will often identify sections of an associated vector bundle $V=B\times_P\Bbb V$ with smooth \emph{$P$-equivariant functions} $f: B\rightarrow \Bbb V$---that is, $f(bp)=p^{-1}f(b)$ for any $b\in B$ and $p\in P$. 

This article deals mainly with \emph{irreducible parabolic geometries}, defined by the property that the unipotent nilradical $P_+$ of $P$ is abelian. 
Such parabolic geometries are in the literature also called \emph{abelian parabolic geometries}, \emph{$|1|$-graded parabolic geometries}, or \emph{almost hermitian symmetric structures}.  

The Cartan connection of an irreducible parabolic geometry of type $(\lieg, P)$ induces a morphism between the $G_0$-principal bundle $B_0=B/P_+$ and the frame bundle 
of $M$, corresponding to the group homomorphism $G_0\rightarrow GL(\lieg/\liep)$. Hence, the Cartan connection induces a first-order $G_0$-structure on $M$. 
Under some homological condition on the pair $(\lieg,\liep)$, the prolongation procedures of \cite{tanaka.parabolic}, \cite{morimoto} and \cite{cap.schichl.equiv}, 
associate to a first-order $G_0$-structure on $M$ a canonical Cartan connection of type $(\lieg, P)$, called the \emph{normal Cartan connection}. 
There is thus an equivalence of categories between 
normal irreducible parabolic geometries of type $(\lieg,P)$ and first order $G_0$-structures on $M$.  For projective and almost c-projective structures, this homological condition is not satisfied, 
but these structures nonetheless determine a canonical irreducible parabolic geometry (see \cite[3.1.16 ]{cap.slovak.book.vol1}).

Thanks to the categorical equivalence above, infinitesimal automorphisms $\mathfrak{inf}(M)$ of underlying geometric structures on $M$ lift to infinitesimal automorphisms $\mathfrak{inf}(B,\omega)$ of the associated Cartan geometry.
The latter are $P$-invariant vector fields $\tilde{\eta} \in \mathfrak{X}(B)$ satisfying $\mathcal L_{\tilde\eta}\omega=0$. 
The flow $\varphi^t_{\tilde{\eta}}$, where defined, acts by automorphisms of the Cartan geometry, which are defined analogously.

Let us now explain the notion of normality of a parabolic geometry. 
The curvature of the Cartan connection $K \in \Omega^2(B,\lieg)$ can be identified via $\omega$ with a $P$-equivariant function
$$\kappa: B \rightarrow \Lambda^2 (\lieg/\liep)^* \otimes \lieg\cong \Lambda^2\liep_+\otimes \lieg.$$
The $P$-module $\Lambda^2\liep_+\otimes \lieg$ belongs to a complex of $P$-modules $\Lambda^* \liep_+ \otimes \lieg$, computing the 
Lie algebra homology of $\liep_+$ with coefficients in $\lieg$. 
A parabolic geometry is \emph{normal} if $\kappa$ has values in the kernel of $\partial^*$. 
The quotient $\Ker\partial^*/ \im\partial^* \cong H_*(\liep_+,\lieg)$ is a completely reducible representation of $P$ (see e.g.\,\cite{cap.slovak.book.vol1}), which therefore factors through $G_0$.  

The projection of the curvature $\kappa$ of a normal parabolic geometry to the quotient $\Ker \partial^* / \im \partial^*$ is called
the \emph{harmonic curvature}.  Having identified $G_0$ with a subgroup of $P$, Kostant's description of $H_*(\liep_+,\lieg)$ in 
\cite{kostant.harmonic.curv} yields a natural identification of the $G_0$-module $H_2(\liep_+,\lieg)$ with a $G_0$-submodule 
$\widehat {\mathbb W}$ of $\mathbb W=\Lambda^2\liep_+\otimes \lieg$ such that as $G_0$-modules,
\begin{equation}\label{ker_partial*}
\Ker \partial^*=\widehat{\mathbb W}\oplus \im \partial^*.
\end{equation} Hence,
we may view the harmonic curvature as a $G_0$-equivariant function
$$\hat\kappa: B\rightarrow \widehat{\mathbb W}\subset \Ker \partial^*\subset \mathbb W,$$
which is constant along the fibers of $B\rightarrow B_0=B/P_+$. 

For normal, irreducible parabolic geometries, the full curvature is related to the harmonic curvature via a differential operator $S$---that is, $S(\hat{\kappa}) = \kappa$.  Therefore vanishing of $\hat{\kappa}$ over on open set $U \subset M$ implies vanishing of $\kappa$ over $U$ (see \cite[Theorem 3.1.12]{cap.slovak.book.vol1}, \cite{calderbank.diemer}).


\subsection{Strongly essential infinitesimal automorphisms}
\label{sec.strongly.ess.flows}
We recall now some results of \cite{cap.me.parabolictrans}, in the case of irreducible parabolic geometries.  

The higher-order zeroes of strongly essential automorphisms can be classified by their geometric types:

\begin{defin}\label{geometric.type} Suppose $(B\stackrel{\pi}{\rightarrow}M,\omega)$ is a normal irreducible parabolic geometry. Assume $\eta\in\mathfrak{inf}(M)$ is strongly essential with a higher-order zero at $x_0\in M$.  Let $Z$ be the isotropy of $\eta$ with respect to $b_0 \in \pi^{-1}(x_0)$, with corresponding $\alpha\in T_{x_0}^*M$. The $P$-orbit of $Z$ in $\liep_+$ is called the \emph{geometric type} of $\alpha$. 
\end{defin}
The geometric type is clearly independent of the choice of $b_0\in\pi^{-1}(x_0)$.  For irreducible geometries, $\liep_+$ is a completely reducible $P$-module, so the $P$-orbits coincide with the 
$G_0$-orbits. 


On a parabolic homogeneous model $(G \rightarrow G/P, \omega_G)$ the left action of a 1-parameter subgroup $e^{tZ}$, $Z \in \liep_+$, is a strongly essential flow.  The isotropy of this flow at $x_0 = 1_GP$ with respect to $1_G$ is $Z$.  The methods of \cite{cap.me.parabolictrans} lie in a comparison of strongly essential flows with their corresponding isotropy flows on the homogeneous model.

Any $X \in \lieg$ defines a vector field $\widetilde{X}$ on $B$ by $\omega(\tilde{X}) \equiv X$.  The exponential map of a Cartan geometry is given 
by the time-one flow along these $\omega$-constant vector fields.
\begin{defin}
\label{def.exp.map}
Let $(B \stackrel{\pi}{\rightarrow} M, \omega)$ be a Cartan geometry.  The \emph{exponential map} at $b \in B$ is 
$$ \exp_b(X) = \varphi^1_{\widetilde{X}}(b) \in B$$
for $X$ in a sufficiently small neighborhood of $0$ in $\lieg$.
\end{defin}
The restriction of $\exp_b$ to a sufficiently small neighborhood of $0$ in $\lieg$ is a 
diffeomorphism onto a neighborhood of $b$ in $B$. The map $\pi\circ \exp_b$ induces a diffeomorphism from a neighborhood of $0$ in $\lieg_-$ to a neighborhood of $x=\pi(b)$. Projections to $M$ of exponential curves 
$s\mapsto\exp(b,sX)$ for $X\in\lieg_-$ are called \emph{distinguished curves}. 

Suppose now that $\eta\in\mathfrak{inf}(M)$ has a 
higher-order zero at $x_0$ and that the isotropy of $\eta$ with respect to $b_0\in\pi^{-1}(x_0)$ is $Z\in\liep_+$. Assume that for any fixed $t \in \BR$ and $X \in \lieg$, the following equation holds in $G$:
\begin{equation}\label{isotropy_flow}
e^{tZ}e^{sX}=e^{c_t(s)X}p_t(s) \qquad \forall s \in I
\end{equation} 
Here $I$ is an interval containing $0$, and $c_t: I \stackrel{\sim}{\rightarrow} I'$ is a diffeomorphism fixing $0$; $p_t: I\rightarrow P$ is a smooth path with $p_t(0)=e^{tZ}$.  Hence, the flow $e^{tZ}$ acts on the curve $e^{sX}P$ in $G/P$ by a reparametrization. 
Then it follows from \cite[ Prop 4.3]{fm.nilpconf} or \cite[Prop 2.1]{cap.me.parabolictrans} that the analogous equation holds in $B$:
\begin{equation}\label{action_flow_in_B}
\varphi^t_{\tilde\eta}\exp(b_0, sX)=\exp(b_0, c_t(s)X)p_t(s)\quad \forall s\in I.
\end{equation} 

\begin{defin}
The \emph{commutant} of $Z \in \liep_+$ is
$$C(Z)=\{X\in\lieg_- \ : \ [Z,X]=0\}$$
\end{defin}

It follows from equation \eqref{isotropy_flow} that for $X\in C(Z)$, the curve $\gamma_X$ consists of higher-order fixed points of $\varphi^t_\eta$ of the same geometric type as $x_0$; in fact, the isotropy of $\eta$ at $\gamma_X(s)$ with respect to $\exp(b_0,sX)$ equals $Z$ (see \cite[Prop 2.5]{cap.me.parabolictrans}). 
As with $T(\alpha)$, the commutant with respect to $b_0$ determines a well-defined subset $C(\alpha) \subset T_{x_0}M$, independent of $b_0\in\pi^{-1}(x_0)$.  

\begin{defin}\label{defin_isol_zero} 
Suppose $\eta\in\mathfrak{inf}(M)$ has a higher-order zero at $x_0\in M$.
\begin{itemize}
\item The \emph{strongly fixed component} of $x_0$ in a neighbhorhood $U$ is the set of all endpoints of smooth curves in $U$ emanating from $x_0$ consisting of higher-order zeroes of $\eta$ of the same geometric type as $x_0$. 
\item
The higher-order zero $x_0$ is called \emph{smoothly isolated} if it equals its strongly fixed component in some neighborhood. 
\end{itemize}
\end{defin}
By \cite[Prop 2.5]{cap.me.parabolictrans}, a higher-order zero with isotropy $\alpha$ is smoothly isolated if and only if $C(\alpha)=\{0\}$.


\begin{definition} Suppose $\varphi^t_{\tilde\eta}$ is a flow by automorphisms of a parabolic geometry $(B \stackrel{\pi}{\rightarrow} M, \omega)$.
A \emph{holonomy path} for $\{\varphi^t_{ \tilde\eta} \}$ at $b \in B$ with \emph{attractor} $b_0$ is a path $p_t \in P, t \geq 0$, so that 
for some path $b_t \stackrel{t \rightarrow \infty}{\longrightarrow} b$, 
$$\varphi^t_{\tilde\eta\,} b_t p_t^{-1} \rightarrow b_0\qquad \textrm{ as } t\rightarrow\infty .$$
\end{definition}

Suppose $\mathbb V$ is a representation of $P$, and $\tau$ 
a $P$-equivariant, $\varphi^t_{\tilde{\eta}}$-invariant function $B\rightarrow \Bbb V$. 
If $p_t$ is a holonomy path at $b$ with attractor $b_0$, then
$$\tau(\varphi^t_{\tilde{\eta}} b_t p_t^{-1}) = p_t \cdot \tau(b_t) \rightarrow \tau(b_0) \quad \textrm{ as } t \rightarrow \infty.$$
Since $\tau(b_t) \stackrel{t \rightarrow \infty}{\longrightarrow} \tau(b)$, it is clear that $\tau(b)$ must be bounded under 
$p_t$ as $t \rightarrow \infty$. Hence, a holonomy path at $b$ gives restrictions on the values of an invariant section of the associated bundle $B \times_P \Bbb{V}$ at $\pi(b)$.

Given an irreducible normal parabolic geometry, assume $\eta\in\mathfrak{inf}(M)$ has a higher-order zero at $x_0\in M$,
and let $Z\in\liep_+$ be the isotropy of $\eta$ with respect to $b_0\in\pi^{-1}(x_0)$. For any $X \in T(Z)\subset\lieg_{-1}$ as defined in Section \ref{sec.intro.results}, $A=[Z,X]\in\lieg_0$ and, by \cite[Prop 2.12]{cap.me.parabolictrans}, 
the action of $e^{tZ}$ on $e^{sX}$ in $G$ satisfies
$$e^{tZ}e^{sX}=e^{\frac{s}{1+st}X}a_t(s)u_t(s)\quad\quad s,t\in \BR, \ st > 0$$
where $a_t(s)=e^{\log(1+st)A}\in G_0$ and $u_t(s)=e^{\frac{t}{1+st} Z}\in P_+$.
(See also Section \ref{sec.cproj.improvement} below, where this formula is derived in a slightly more general setting.)  Hence \eqref{action_flow_in_B} implies the following key equation for $X \in T(Z)$:
\begin{eqnarray}
\label{eqn.sl2.holo}
\varphi^t_{\tilde\eta} \tilde{\gamma}_X (s) = \tilde{\gamma}_X \left( \frac{s}{1+st} \right) a_t(s) u_t(s) \quad \forall s\in I, \ st > 0
\end{eqnarray}
where $I$ is the domain of  ${\tilde\gamma}_X$.
Thus for $s > 0$, $p_t(s) = a_t(s) u_t(s)\in P$ is a holonomy path for $\varphi^t_{\tilde\eta}$ at $\tilde{\gamma}_X(s)$ with attractor $b_0$; for $s <0$, the same holds for $\varphi^{-t}_{\tilde{\eta}}$ (see \cite[Prop 2.12]{cap.me.parabolictrans}).  Alternatively, for $s > 0$, 
$a_t(s)\in G_0$ is a holonomy path for $\varphi^t_{\tilde\eta}$ at $\tilde{\gamma}_X(s) e^{- \frac{1}{s} Z}$ with attractor $b_0$ (respectively for $s <0$, $\varphi^{-t}_{\tilde{\eta}}$).  We will use both forms of the holonomy path.

Because, as above, $A \in \lieg_0$, representation theory of $\algsl(2)$ gives that $A$ acts diagonalizably with integer eigenvalues on any completely reducible representation of $\lieg_0$.  The holonomy path $a_t(s)$ therefore acts diagonalizably with eigenvalues $(1+st)^\ell$ on any completely reducible $G_0$-module $\mathbb V$, where $\ell$ is an eigenvalue of $A$ on $\mathbb V$.
Write $\Bbb V^{[\ell]}(A)$ for the $\ell$-eigenspace of $A$ on $\mathbb{V}$.
\begin{defin}
The \emph{stable subspace} and \emph{strongly stable subspace} of $A$ on $\Bbb{V}$ are, respectively,
$$\Bbb V^{[st]}(A) = \oplus_{\ell \leq 0}\Bbb V^{[\ell]} \qquad \mbox{and} \qquad \Bbb V^{[ss]}(A) = \oplus_{\ell < 0} \Bbb V^{[\ell]}$$
\end{defin}
If $A$ is understood, we will also simply write $\Bbb V^{[st]}$ or $\Bbb V^{[ss]}$.

Now assume $\Bbb V$ is a completely reducible $P$-module, so $P_+$ acts trivially on it.
Let $\tau: B\rightarrow \Bbb V$ be a $P$-equivariant function that is invariant under a strongly essential flow $\varphi^t_{\tilde\eta}$. Then 
we deduce from equation \eqref{eqn.sl2.holo} (see \cite[Prop 2.15]{cap.me.parabolictrans}) that for any $X\in T(Z)$, 
\begin{align}\label{tau_along_special_curves}
&\bullet\, \tau(\tilde{\gamma}_X(s))\in\Bbb V^{[st]}(A) \quad \textrm{ for all } s\in I\\ 
&\bullet\, \tau(\tilde{\gamma}_X(s))\in\Bbb V^{[ss]}(A)\quad \textrm{ for all } s\in I \textrm{ provided } \tau(b_0)=0\nonumber.
\end{align}


In \cite{cap.me.parabolictrans} the authors apply \eqref{tau_along_special_curves} to the harmonic curvature $\hat\kappa: B\rightarrow \widehat{\Bbb W}$.
They verify for various parabolic geometries admitting strongly essential flows that $\widehat {\Bbb W}^{[st]}(A)=\{0\}$, or that 
$\hat\kappa(b_0)=0$ and $\widehat {\Bbb W}^{[ss]}(A)=\{0\}$, for any $X\in T(Z)$, which implies vanishing of $\hat\kappa$
along the curves $\gamma_X$ for $X\in T(Z)$. In many cases however $\widehat{\Bbb W}^{[ss]}(A)$ for $X\in T(Z)$ is nontrivial and so stronger techniques 
are required to obtain vanishing of the harmonic curvature along the curves $\gamma_X$. The main result of this article, Theorem \ref{thm.harmonic.vanishing}, 
shows that for normal irreducible parabolic geometries the harmonic curvature always vanishes along the curves $\gamma_X$ emanating from the fixed point of a strongly essential flow.

\section{Vanishing of the harmonic curvature along special curves}
\label{sec.main.proof}

This section contains the proof of Theorem \ref{thm.harmonic.vanishing}.  The first step of the proof is Proposition \ref{prop.s.to.the.k}, which establishes that $\hat\kappa(\tilde{\gamma}_X(s))$ is a polynomial in $s$ with respect to a parallel moving frame along the curve $\tilde{\gamma}_X$.  We then use Proposition \ref{prop.s.to.the.k} to show that for normal irreducible parabolic geometries 
of type $(\lieg, P)$ with $\lieg$ simple, existence of an infinitesimal automorphism with a higher-order zero implies vanishing 
of the harmonic curvature along the curves in $\mathcal T(\alpha)$. 
The proof of Theorem \ref{thm.harmonic.vanishing} also shows that the full Cartan curvature vanishes at the zero. A unified proof of 
Theorem \ref{thm.harmonic.vanishing} for all irreducible parabolic geometries requires first 
a thorough study of $\algsl(2)$-triples $\{X, A, Z\}$ of simple $|1|$-graded Lie algebras $\lieg$ 
that are adapted to the grading in such a way that $X\in\lieg_{-1}$, $A\in\lieg _0$ and $Z\in\lieg_1$.
Using the representation theory of $\algsl(2)$, we prove general facts about the 
eigenvalues of the action of semisimple elements $A\in\lieg_0$ 
of adapted $\algsl(2)$-triples on the grading components 
of $\lieg$.

\subsection{Polynomial expression of invariant sections along special curves}
\label{sec.polynomial}
In this section $(B\stackrel{\pi}{\rightarrow}M,\omega)$ is an arbitrary parabolic geometry admitting a nontrivial infinitesimal automorphism $\tilde{\eta} \in \mathfrak{inf}(B,\omega)$.  We suppose that $\omega(\tilde\eta(b_0)) = Z \in \liep_+$ for some $b_0\in B$. 


\begin{proposition}\label{prop.s.to.the.k} 
Suppose $V=B\times_P\mathbb V$ is a completely reducible vector bundle and suppose $\tau: B\rightarrow \Bbb V$ corresponds to a 
$\{ \varphi^t_{\tilde\eta} \}$-invariant section of $V$. Then for any $X \in T(Z)$ such that $A=[Z,X]\in\lieg_0$ and any integer $k \geq 0$, the component of 
$\tau$ in $\mathbb{V}^{[-k]}(A)$ satisfies
$$ (\tau \circ \tilde{\gamma}_X)^{[-k]}(s) = s^k v^{[-k]}\quad\textrm{ for some }\, v^{[-k]} \in\Bbb{V}^{[-k]}(A).$$
Therefore, for all $s$ in the domain $I$ of $\tilde{\gamma}_X$, 
$$(\tau\circ \tilde{\gamma}_X)(s) = \sum_{k \geq 0} s^{k} v^{[-k]}\quad \textrm{ where }\, v^{[-k]} \in \Bbb{V}^{[-k]}(A).$$
\end{proposition}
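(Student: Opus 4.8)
The plan is to extract a functional equation for $\tau\circ\tilde\gamma_X$ from the holonomy relation \eqref{eqn.sl2.holo}, solve it eigenspace by eigenspace, and then glue the one-sided solutions across $s=0$ using smoothness of $\tau\circ\tilde\gamma_X$.

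First I would combine the flow-invariance of $\tau$ with its $P$-equivariance. Applying $\tau$ to both sides of \eqref{eqn.sl2.holo} and using $\tau\circ\varphi^t_{\tilde\eta}=\tau$ together with $\tau(bp)=p^{-1}\cdot\tau(b)$ gives, for $st>0$,
$$(\tau\circ\tilde\gamma_X)(s)=(a_t(s)u_t(s))^{-1}\cdot(\tau\circ\tilde\gamma_X)\!\left(\tfrac{s}{1+st}\right).$$
Since $\mathbb V$ is completely reducible, $P_+$ acts trivially, so the factor $u_t(s)=e^{\frac{t}{1+st}Z}\in P_+$ drops out. Writing $f=\tau\circ\tilde\gamma_X$ and projecting onto the eigenspace $\mathbb V^{[\ell]}(A)$, on which $a_t(s)^{-1}=e^{-\log(1+st)A}$ acts by the scalar $(1+st)^{-\ell}$, this yields the scalar functional equation
$$f^{[\ell]}(s)=(1+st)^{-\ell}\,f^{[\ell]}\!\left(\tfrac{s}{1+st}\right),\qquad st>0,\ \tfrac{s}{1+st}\in I.$$

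Next I would solve this equation for $\ell=-k\le 0$. Setting $g(s)=f^{[-k]}(s)/s^k$ for $s\neq 0$, the relation collapses to $g(s)=g(s/(1+st))$. For fixed $s>0$ and $t$ ranging over the values with $st>0$, the point $s/(1+st)$ sweeps out the whole open interval $(0,s)$; concretely, given $0<s'<s$ the choice $t=\tfrac1{s'}-\tfrac1s>0$ produces $s/(1+st)=s'$. Hence $g$ is constant on $I\cap(0,\infty)$ and, by the same argument with $t<0$, constant on $I\cap(-\infty,0)$, so $f^{[-k]}(s)=c_+s^k$ for $s>0$ and $c_-s^k$ for $s<0$ with $c_\pm\in\mathbb V^{[-k]}(A)$. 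For $\ell>0$ I would instead let $t\to\pm\infty$: the factor $(1+st)^{-\ell}\to 0$ while $f^{[\ell]}(s/(1+st))\to f^{[\ell]}(0)$ stays bounded, forcing $f^{[\ell]}\equiv 0$; this recovers the first bullet of \eqref{tau_along_special_curves} and shows only non-positive eigenvalues contribute.

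The \emph{main obstacle} is matching the one-sided constants $c_+$ and $c_-$, since the functional equation (valid only for $st>0$) never relates $s>0$ to $s<0$. Here I would invoke smoothness of $f=\tau\circ\tilde\gamma_X$: a function equal to $c_+s^k$ for $s\ge 0$ and $c_-s^k$ for $s<0$ has all derivatives of order $<k$ vanishing at $0$, while its $k$-th one-sided derivatives at $0$ are $k!\,c_+$ and $k!\,c_-$ respectively, so $C^k$-regularity forces $c_+=c_-=:v^{[-k]}$. This gives $f^{[-k]}(s)=s^k v^{[-k]}$ on all of $I$, the first assertion. Summing over the (non-positive integer) eigenvalues of $A$ then produces $(\tau\circ\tilde\gamma_X)(s)=\sum_{k\ge 0}s^k v^{[-k]}$, which is the desired expression.
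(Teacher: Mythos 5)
Your proof is correct and follows essentially the same route as the paper's: the same functional equation extracted from \eqref{eqn.sl2.holo} via invariance and $P$-equivariance (with $u_t(s)$ dropping out because $P_+$ acts trivially on a completely reducible module), solved on each half-interval $I\cap(0,\infty)$ and $I\cap(-\infty,0)$ to give $c_\pm s^k$, and with the two constants matched by comparing $k$-th derivatives at $s=0$. The only cosmetic difference is that you re-derive the vanishing of the positive-eigenvalue components by letting $t\to\pm\infty$, whereas the paper simply cites \eqref{tau_along_special_curves}.
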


\begin{proof}  Fix $X \in T(Z)$ and write $\tilde{\gamma} = \tilde{\gamma}_X$.  By \eqref{tau_along_special_curves} the curve $\tau\circ\tilde\gamma$ 
has values in $\Bbb V^{[st]}=\Bbb V^{[st]}(A)$ for $A = [Z,X]$. Let $k \geq 0$ be an integer and $e_1, 
\ldots, e_N$ be a basis of $\mathbb{V}^{[-k]}=\mathbb{V}^{[-k]}(A)$.  The goal is to show that for some constants $c^i$, 
$$ (\tau \circ \tilde{\gamma})^{[-k]}(s) = s^k \sum_i c^i e_i$$
Begin by writing
$$ (\tau \circ \tilde{\gamma})^{[-k]}(s) = \sum_i f^i(s) e_i.$$

Complete reducibility of $\Bbb{V}$ and the invariance of $\tau$ gives, by formula (\ref{eqn.sl2.holo}),
\begin{eqnarray*}
(\tau \circ \tilde{\gamma})^{[-k]}(s) & = & \tau^{[-k]} (\varphi^t \tilde{\gamma}(s))\\
 &=&  \tau^{[-k]} (\tilde{\gamma} \left( \frac{s}{1+st} \right) a_s(t) e^{\frac{t}{1+st} Z} ) \\
 & = & a_s(t)^{-1} \cdot (\tau \circ \tilde{\gamma} )^{[-k]} \left( \frac{s}{1+st} \right) \\
 & = & a_s(t)^{-1} \cdot \sum f^i \left( \frac{s}{1+st} \right)  e_i \\
 & = & (1+st)^{k} \sum f^i \left( \frac{s}{1+st} \right)  e_i
 \end{eqnarray*}
 
 because $e_i \in \mathbb{V}^{[-k]}$.  Thus
 \begin{eqnarray}
 \label{eqn.f.equivariance}
  f^i \left( \frac{s}{1+st} \right) = (1+st)^{-k} f^i(s)
  \end{eqnarray} 
 
Fix $0 \leq s_0 \in I$.  For $t \in [0,\infty)$, the variable $x = \frac{s_0}{1+s_0t}$ ranges from $s_0$ to $0$.  Equation (\ref{eqn.f.equivariance}) reads
$$f^i(x) = \frac{x^k}{s_o^k} f^i(s_0)$$ 
Thus for $s \in [0,s_0]$, $f^i(s) = C_+ s^k$, where $C_+ = f^i(s_0)/s_0^k$.  The same holds for negative $s \in I$, for some $C_-$, when $t$ ranges from $0$ to $- \infty$.  Now note that 
$$ k! C_+ = \lim_{s \rightarrow 0^+} \frac{\textrm{d}^k}{\textrm{d}s^k} f^i(s) = \lim_{s \rightarrow 0^-} \frac{\textrm{d}^k}{\textrm{d}s^k} f^i(s) = k! C_-$$
so $C_+ = C_-$.  
\end{proof}  

\begin{remark}
It is possible to prove a discrete-time version of this proposition.  More precisely, if there is a sequence of automorphisms $\{h_k \}$ of $(M,B,\omega)$ fixing $x_0$ and a curve $\tilde{\gamma}_X$ through $b_0 \in \pi^{-1}(x_0)$ for which 
$$ h_k \tilde{\gamma}_X(s) = \tilde{\gamma}_X \left(\frac{s}{1+st_k} \right) a_s(t_k) e^{\frac{t_k}{1+st_k}Z} $$
then the conclusion of Proposition \ref{prop.s.to.the.k} holds.  The proof can be found in a previous version of this paper at \cite{me.neusser.parabolic.arxiv}.
\end{remark}

 \subsection{Recall of representation theory of  $\algsl(2)$}
 \label{Recall_sl2}

Let us briefly recall the representation theory of $\algsl(2,\bold{K})$ for $\bold{K}=\BC,\BR$. 
It is well known that for any integer $\ell\geq 0$ 
there exists an irreducible $\algsl(2,\bold{K})$-module $\mathbb V(\ell)$ 
of $\bold{K}$-dimension $\ell+1$, unique up to isomorphism.  Denote  
the standard generators of $\algsl(2,\bold{K})$ by
\begin{equation}\label{standard_generators}
X= \left(\begin{array}{cc}
0& 0\\ 
1& 0\\
\end{array}
\right),\,\,
Z= \left(\begin{array}{cc}
0& 1\\ 
0& 0\\
\end{array}
\right)\, \textrm{ and }\,\,
A=[Z,X]= \left(\begin{array}{cc}
1& 0\\ 
0& -1\\
\end{array}
\right).
\end{equation} 

With respect to the action of the semisimple element $A$ 
the module $\mathbb V(\ell)$ decomposes into a direct sum of $1$-dimensional weight spaces with weights 
$\ell, \ell-2,\ldots,-\ell+2,-\ell$. In particular, $\ell$ is the highest weight of $\mathbb V(\ell)$.
Let $v_0$ be a highest weight vector of $\mathbb V(\ell)$ and set $v_j=\frac{1}{j!}X^j v_0$ for $j=1,\ldots,\ell$. Then
the elements $\{v_0, v_1, v_2,\ldots,v_\ell\}$ form a basis of $\mathbb V(\ell)$ that consists of weight vectors of $A$. 
The action of the standard generators on these basis elements is given by
\begin{equation}\label{standard_generators2}
 Av_j=(\ell-2j)v_j\qquad
Xv_j=(j+1)v_{j+1}\qquad
 Zv_j=(\ell-j+1)v_{j-1},
\end{equation} 
where we decree $v_{-1}=0$ and $v_{\ell+1}=0$. In particular, this implies that $v_0=\frac{1}{\ell!}Z^{\ell}v_{\ell}$.



\subsection{Adapted $\algsl(2)$-triples in $|1|$-graded semisimple Lie algebras}
\label{adapted_sl2_triples}
Suppose $\lieg$ is a complex or real semisimple Lie algebra. Let $0\neq Z\in\lieg$ be a nilpotent element, meaning $\textrm{ad}(Z)$ 
is nilpotent on $\lieg$. We have already remarked that $Z$ can be completed 
to an $\algsl(2)$-triple $\{X, A, Z\}$ of the following form:
\begin{equation}\label{sl2}
A=[Z,X]\qquad [A,Z]=2Z\qquad [A,X]=-2X.
\end{equation}
This implies in particular that $Z$ has degree of nilpotency $\geq 3$ in $\lieg$. Conversely, by elementary representation theory 
(see also Proposition \ref{eigenvalues} below), for any $\algsl(2)$-triple as in \eqref{sl2}, the element $Z\in\lieg$ is a nilpotent element 
in $\lieg$ of degree of nilpotency $\geq 3$. The element $A$ acts 
diagonally on $\lieg$ with integer eigenvalues; the $j$-eigenspace is denoted $\lieg^{[j]}$ (cf. Section \ref{sec.strongly.ess.flows}). 
We write $\lieg^{[j]}$ even if $j\in\bold Z$ is not an eigenvalue of $A$, in which case this space is $\{0 \}$. 

\begin{proposition}\label{eigenvalues}  
Suppose $\lieg$ is a complex or real semisimple Lie algebra and let $\{X, A, Z\}$ be an $\algsl(2)$-triple in $\lieg$ as in \eqref{sl2}. 
If $Z$ has degree of nilpotency $3$, then $A$ acts diagonally on $\lieg$ with integer eigenvalues between $-2$ and $2$. The operators $\emph{ad}(Z)$ and $\emph{ad}(X)$induce inverse 
isomorphisms between $\lieg^{[-1]}$ and $\lieg^{[1]}$, and $\frac{1}{2}\emph{ad}^2(Z)$ and $\frac{1}{2}\emph{ad}^2(X)$ induce inverse isomorphisms between $\lieg^{[-2]}$ and $\lieg^{[2]}$.
\end{proposition}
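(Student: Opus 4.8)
The plan is to treat $\lieg$ as a finite-dimensional module over the subalgebra $\mathfrak{s}=\langle X,A,Z\rangle\cong\algsl(2,\bold{K})$ via the adjoint action; the bracket relations \eqref{sl2} guarantee that $\{X,A,Z\}$ is an honest $\algsl(2)$-triple. By Weyl's complete reducibility theorem, valid over any field of characteristic zero and hence for $\bold{K}=\BR$ as well, I would decompose $\lieg$ as a direct sum of irreducible submodules, each isomorphic to some $\mathbb{V}(\ell)$ as recalled in Section \ref{Recall_sl2}. On each summand $A$ acts diagonally with weights $\ell,\ell-2,\dots,-\ell$, so $A$ acts diagonally on all of $\lieg$ with integer eigenvalues, and the weight spaces decompose as $\lieg^{[j]}=\bigoplus_i\mathbb{V}(\ell_i)^{[j]}$.

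The crux is then to use the nilpotency hypothesis to bound the highest weights. From \eqref{standard_generators2}, on $\mathbb{V}(\ell)$ the operator $\mathrm{ad}(Z)$ raises the $A$-weight by $2$ and satisfies $v_0=\frac{1}{\ell!}Z^{\ell}v_{\ell}$, whence $\mathrm{ad}(Z)^{\ell}\neq0$ but $\mathrm{ad}(Z)^{\ell+1}=0$ on $\mathbb{V}(\ell)$. Since $Z$ has degree of nilpotency $3$, i.e. $\mathrm{ad}(Z)^3=0$ on $\lieg$, every summand satisfies $\ell_i\le 2$. Thus only the types $\mathbb{V}(0),\mathbb{V}(1),\mathbb{V}(2)$ occur, whose weights lie in $\{-2,-1,0,1,2\}$; this proves the eigenvalue bound.

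For the isomorphism claims I would observe which irreducible types contribute to each weight space: a weight of $\pm1$ occurs only in copies of $\mathbb{V}(1)$, and a weight of $\pm2$ only in copies of $\mathbb{V}(2)$. On a single copy of $\mathbb{V}(1)$, formula \eqref{standard_generators2} gives $Zv_1=v_0$ and $Xv_0=v_1$, so $\mathrm{ad}(Z)$ and $\mathrm{ad}(X)$ restrict to mutually inverse maps between the weight lines $\mathbb{V}(1)^{[-1]}$ and $\mathbb{V}(1)^{[1]}$; summing over all copies yields mutually inverse isomorphisms $\mathrm{ad}(Z)\colon\lieg^{[-1]}\to\lieg^{[1]}$ and $\mathrm{ad}(X)\colon\lieg^{[1]}\to\lieg^{[-1]}$. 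On a single copy of $\mathbb{V}(2)$, the same formula gives $Z^2v_2=2v_0$ and $X^2v_0=2v_2$, so $\tfrac12\mathrm{ad}^2(Z)$ and $\tfrac12\mathrm{ad}^2(X)$ are mutually inverse between $\mathbb{V}(2)^{[-2]}$ and $\mathbb{V}(2)^{[2]}$; summing again produces the stated inverse isomorphisms between $\lieg^{[-2]}$ and $\lieg^{[2]}$.

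I expect the only genuine subtlety to be the real case: a priori one reasons over $\BC$, but since the operators $\mathrm{ad}(X)$, $\mathrm{ad}(Z)$, $A$ and the weight spaces $\lieg^{[j]}$ are all defined over $\bold{K}$, the diagonalizability, the integrality of the eigenvalues, and the inverse-isomorphism statements descend from the complexification to the real form; alternatively one invokes directly the $\bold{K}=\BR$ representation theory recorded in Section \ref{Recall_sl2}. The remaining work is the routine bookkeeping of the normalization constants in the $\mathbb{V}(2)$ computation that justifies the factors of $\tfrac12$.
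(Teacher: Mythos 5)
Your proof is correct and follows essentially the same route as the paper's: decompose $\lieg$ into irreducible modules for the $\algsl(2)$-subalgebra spanned by $\{X,A,Z\}$, observe that degree of nilpotency $3$ forces every summand to be $\mathbb{V}(0)$, $\mathbb{V}(1)$, or $\mathbb{V}(2)$, and read off the inverse isomorphisms from the formulas \eqref{standard_generators2}. The paper is merely terser on the last step (phrasing it as ``nonzero elements of $\lieg^{[1]}$ and $\lieg^{[2]}$ are highest weight vectors''), while you carry out the normalization computations explicitly; the content is identical.
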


\begin{proof}
Suppose $\lieg$ is a complex or real semisimple Lie algebra, let $\{X, A, Z\}$ be an $\algsl(2)$-triple in $\lieg$, and 
denote by $\mathfrak a$ the complex or real $\algsl(2)$ subalgebra generated by $X$, $A$, and $Z$. 
As an $\mathfrak a$-module, $\lieg$ decomposes into irreducible components.  From Section \ref{Recall_sl2}, each irreducible component is isomorphic to $\mathbb V(\ell)$ for some $\ell\geq 0$. Choose for each component 
an adapted basis of weight vectors as in \eqref{standard_generators2}.
It is immediately apparent that $Z$ is a nilpotent element in $\lieg$ of degree $\ell_0+1$ if and only if $\ell_0$ is the largest positive integer such that $\mathbb V(\ell_0)$ occurs as an irreducible submodule 
in $\lieg$. Hence, if $Z$ has degree of nilpotency $3$, then all irreducible components in $\lieg$ are isomorphic to either $\mathbb V(0)$, $\mathbb V(1)$, or $\mathbb V(2)$. Thus all eigenvalues of $A$ are integers between $-2$ and $2$.
Moreover, provided that $\lieg^{[1]}\neq \{0\}$, any nonzero elements in $\lieg^{[1]}$ and $\lieg^{[2]}$ are highest weight vectors of the $\mathfrak a$-module $\lieg$.  The second claim follows from \eqref{standard_generators2}.
\end{proof}

Suppose now that $\lieg$ is a complex or real semisimple Lie algebra equipped with a $|1|$-grading.
Let $Z\in\lieg_1$ be nonzero and fix an element $X\in T(Z)$. 
Since $\textrm{ad}^3(Z)=0$, the elements $X\in\lieg_{-1}$, $A=[Z,X]\in\lieg_0$ and $Z\in\lieg_1$ form 
an $\algsl(2)$-triple in $\lieg$ such that $Z$ has degree of nilpotency $3$. Hence, by Proposition
\ref{eigenvalues} the semisimple element $A$ acts diagonally on $\lieg$ with integer eigenvalues 
between $-2$ and $2$. Also $A \in \lieg_0$, so $A$ preserves each $\lieg_i$, with eigenspace decomposition
\begin{equation}\label{eigenspace_decomposition}
\lieg_i=\bigoplus_ {j=-2}^{2} \lieg_i^{[j]}\quad i=1,0-1.
\end{equation}
By the Jacobi identity, 
\begin{equation}\label{compatibility_grading_eigenspaces}
[ \lieg_i^{[j]}, \lieg_k^{[\ell]}]\subset  \lieg_{i+k}^{[j+\ell]} 
\end{equation}
for all $-1 \leq i,k \leq 1$ and 
$-2 \leq j,\ell \leq 2$, where we decree that $\lieg_{r}^{[s]}=\{0\}$ for $|r|>2$ or $|s|>3$.

As above, let $\mathfrak a$ be the $\algsl(2)$ subalgebra generated by $X$, $A$, and $Z$, and let $\mathfrak z(Z)$ be the centralizer of $Z$ in $\lieg$.
Note that $\mathfrak z(Z)$ is precisely the subspace of all the highest weight vectors of irreducible $\mathfrak a$-modules in $\lieg$. By the Jacobi identity, $\mathfrak z(Z)$ is 
an $A$-invariant subalgebra of $\lieg$. By Proposition \ref{eigenvalues} its decomposition into eigenspaces of $A$ is therefore given by
\begin{equation}\label{center_decomposition}
\mathfrak z(Z)=\lieg^{[2]}\oplus\lieg^{[1]}\oplus(\lieg^{[0]}\cap\mathfrak z(Z)). 
\end{equation}


Decomposing $\lieg$ into irreducible components 
for the action of $\mathfrak a$, Proposition \ref{eigenvalues} and \eqref{compatibility_grading_eigenspaces}
immediately give detailed information about the eigenspace decomposition of $A$ 
on the individual grading components of $\lieg$:

\begin{proposition} \label{eigenvalues_on_grading_components}
Suppose $\lieg=\lieg_{-1}\oplus\lieg_0\oplus\lieg_1$ is a complex or real $|1|$-graded semisimple Lie algebra. 
Let $Z\in\lieg_1$ be nonzero. Fix an element $X\in T(Z)$ and set $A=[Z,X]\in\lieg_0$. Then:
\begin{itemize}
\item[(a)] The linear operator $\emph{ad}(Z)$ induces isomorphisms $\lieg_{-1}^{[-1]}\cong\lieg_0^{[1]}$
and $\lieg_{0}^{[-1]}\cong\lieg_{1}^{[1]}$, the inverses of which are induced by $\emph{ad}(X)$.
\item[(b)] The linear operator $\emph{ad}^2(Z)$ induces an isomorphism $\lieg_{-1}^{[-2]}\cong\lieg_1^{[2]}$, and $\emph{ad}^2(X)$ induces an isomorphism the other way.
\item[(c)] The possible eigenvalues of $A$ on $\lieg_{-1}$ are $-2$, $-1$, and $0$; moreover, $\lieg_{-1}^{[0]}=C(Z)$. 
\item[(d)] The possible eigenvalues of $A$ on $\lieg_{1}$ are $2$, $1$, and $0$.
\item[(e)] The possible eigenvalues of $A$ on $\lieg_0$ are $-1$, $0$, and $1$; moreover, 
$$\lieg_0^{[1]}=[\lieg_{-1}^{[-1]},Z]=\mathfrak z(Z)\cap[\lieg_{-1},Z]\subset \lieg_0,$$ which is an abelian 
ideal of the subalgebra $\mathfrak z(Z)\cap\lieg_0$.  
\end{itemize}
\end{proposition}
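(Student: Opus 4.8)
The plan is to combine Proposition \ref{eigenvalues}, which controls the $\mathfrak{a}$-module structure of $\lieg$, with two elementary bookkeeping facts about how $\mathrm{ad}(Z)$ and $\mathrm{ad}(X)$ interact with the bigrading by grading degree and $A$-weight. Since $Z\in\lieg_1$ and $[A,Z]=2Z$, the Jacobi identity \eqref{compatibility_grading_eigenspaces} shows that $\mathrm{ad}(Z)$ carries $\lieg_i^{[j]}$ into $\lieg_{i+1}^{[j+2]}$, while $\mathrm{ad}(X)$ carries $\lieg_i^{[j]}$ into $\lieg_{i-1}^{[j-2]}$. The key leverage is that in a $|1|$-grading $\lieg_r=\{0\}$ for $|r|\geq 2$. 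First I would feed this into Proposition \ref{eigenvalues}. The operator $\mathrm{ad}(Z)$ is an isomorphism $\lieg^{[-1]}\stackrel{\sim}{\rightarrow}\lieg^{[1]}$; on grading-refined pieces it sends $\lieg_{-1}^{[-1]}\to\lieg_0^{[1]}$, $\lieg_0^{[-1]}\to\lieg_1^{[1]}$ and $\lieg_1^{[-1]}\to\lieg_2^{[1]}=\{0\}$, so injectivity forces $\lieg_1^{[-1]}=\{0\}$, and symmetrically (using $\mathrm{ad}(X)$) $\lieg_{-1}^{[1]}=\{0\}$. Applying the same reasoning to the isomorphism $\tfrac12\mathrm{ad}^2(Z)\colon\lieg^{[-2]}\stackrel{\sim}{\rightarrow}\lieg^{[2]}$ forces $\lieg_0^{[\pm2]}=\lieg_1^{[-2]}=\lieg_{-1}^{[2]}=\{0\}$.

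These vanishing statements already yield most of the proposition. The eigenvalue restrictions in (c), (d) and the first assertion of (e) are exactly the vanishings just listed. For (a) and (b), the global isomorphisms of Proposition \ref{eigenvalues} respect the grading shift, so once the overflow pieces are known to vanish they restrict to the claimed isomorphisms $\lieg_{-1}^{[-1]}\cong\lieg_0^{[1]}$, $\lieg_0^{[-1]}\cong\lieg_1^{[1]}$ and $\lieg_{-1}^{[-2]}\cong\lieg_1^{[2]}$, with inverses induced by $\mathrm{ad}(X)$ and $\mathrm{ad}^2(X)$. For the identity $\lieg_{-1}^{[0]}=C(Z)$ in (c), I would argue by two inclusions. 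Since $\mathrm{ad}(Z)$ maps $\lieg_{-1}^{[0]}$ into $\lieg_0^{[2]}=\{0\}$, every weight-$0$ vector in $\lieg_{-1}$ is centralized by $Z$, hence lies in $C(Z)$ (recall $\lieg_-=\lieg_{-1}$ here). Conversely $C(Z)=\lieg_{-1}\cap\mathfrak{z}(Z)$, and by \eqref{center_decomposition} the centralizer $\mathfrak{z}(Z)$ carries only nonnegative $A$-weights while $\lieg_{-1}$ carries only nonpositive ones, so $C(Z)\subseteq\lieg_{-1}^{[0]}$.

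The hard part is the structural description of $\lieg_0^{[1]}$ in (e). Part (a) already gives $\lieg_0^{[1]}=\mathrm{ad}(Z)(\lieg_{-1}^{[-1]})=[\lieg_{-1}^{[-1]},Z]$, and since $\lieg_0^{[1]}\subseteq\lieg^{[1]}\subseteq\mathfrak{z}(Z)$ by \eqref{center_decomposition}, the inclusion $\lieg_0^{[1]}\subseteq\mathfrak{z}(Z)\cap[\lieg_{-1},Z]$ is immediate. For the reverse inclusion I would take $w\in\mathfrak{z}(Z)\cap[\lieg_{-1},Z]$ and decompose it by $A$-weight; as $\mathfrak{z}(Z)$ has nonnegative weights, $w=w^{[0]}+w^{[1]}$ with $w^{[j]}\in\lieg_0^{[j]}$. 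Writing $w^{[0]}=[Z,u]$ with $u\in\lieg_{-1}^{[-2]}$ (the only grading piece feeding $\lieg_0^{[0]}$ under $\mathrm{ad}(Z)$), the condition $w^{[0]}\in\mathfrak{z}(Z)$ reads $\mathrm{ad}^2(Z)u=0$; but (b) says $\mathrm{ad}^2(Z)$ is injective on $\lieg_{-1}^{[-2]}$, so $u=0$ and $w=w^{[1]}\in\lieg_0^{[1]}$. This injectivity is the crux of the whole part, which is why (b) must be established first.

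Finally, that $\lieg_0^{[1]}$ is an abelian ideal of the subalgebra $\mathfrak{z}(Z)\cap\lieg_0$ follows by weight-counting with \eqref{compatibility_grading_eigenspaces} (the intersection is a subalgebra, being that of the centralizer subalgebra $\mathfrak{z}(Z)$ with the grading subalgebra $\lieg_0$). The bracket of two weight-$1$ elements lands in $\lieg_0^{[2]}=\{0\}$, giving abelianness; and since $\mathfrak{z}(Z)\cap\lieg_0$ has weights only in $\{0,1\}$, bracketing a weight-$1$ element of $\lieg_0^{[1]}$ with the weight-$0$, resp.\ weight-$1$, component of an element of $\mathfrak{z}(Z)\cap\lieg_0$ lands in $\lieg_0^{[1]}$, resp.\ $\lieg_0^{[2]}=\{0\}$, giving the ideal property.
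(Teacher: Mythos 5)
Your proof is correct, and its skeleton is the same as the paper's: both decompose $\lieg$ as a module over the $\algsl(2)$-subalgebra $\mathfrak a$ spanned by $\{X,A,Z\}$, invoke Proposition \ref{eigenvalues}, and exploit the compatibility \eqref{compatibility_grading_eigenspaces} of the bigrading; your explicit ``overflow vanishing'' mechanism is exactly the natural way to fill in the paper's one-line deduction of (a), (b), (d) and the eigenvalue bounds, and your proof of the second half of (e) hinges, like the paper's, on injectivity of $\mathrm{ad}^2(Z)$ on $\lieg_{-1}^{[-2]}$ from (b). The genuine divergence is in the identity $\lieg_{-1}^{[0]}=C(Z)$ in (c). The paper argues module-theoretically: $C(Z)\subset\lieg_{-1}^{[0]}$ via the Jacobi identity and abelianness of $\lieg_{-1}$, and the reverse inclusion by observing that a weight-$0$ vector of $\lieg_{-1}$ not generating a trivial $\mathfrak a$-submodule would have the form $[X,V]$ for a highest weight vector $V\in\lieg^{[2]}$, forcing $[X,[X,V]]\neq 0$ and contradicting $[\lieg_{-1},\lieg_{-1}]=0$. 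You instead get both inclusions by pure weight bookkeeping: $[Z,\lieg_{-1}^{[0]}]\subset\lieg_0^{[2]}=\{0\}$ gives $\lieg_{-1}^{[0]}\subset C(Z)$, and comparing the nonnegative weights of $\mathfrak z(Z)$ from \eqref{center_decomposition} with the nonpositive weights on $\lieg_{-1}$ gives the converse. Your route is more uniform and mechanical; the paper's has the small merit of exhibiting $C(Z)$ explicitly as the sum of the trivial $\mathfrak a$-submodules of $\lieg_{-1}$, which makes the representation-theoretic meaning transparent. One cosmetic point in your step for (e): when you decompose $w\in\mathfrak z(Z)\cap[\lieg_{-1},Z]$ into $A$-weight components and keep working inside that intersection, you should note that $[\lieg_{-1},Z]$ is $A$-invariant (immediate from $A\in\lieg_0$ and $[A,Z]=2Z$), or alternatively that $w^{[1]}\in\lieg_0^{[1]}=[\lieg_{-1}^{[-1]},Z]$ by (a), so that $w^{[0]}=w-w^{[1]}$ indeed lies in $[\lieg_{-1},Z]\cap\mathfrak z(Z)$; this is a one-line check, not a gap.
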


\begin{proof}
Let $0 \neq Z \in \lieg_1$, fix $X\in T(Z)$ and set $A=[Z,X]\in\lieg_0$. 
Denote by $\mathfrak a$ the $\algsl(2)$ subalgebra of $\lieg$ generated by 
$X$, $A$, and $Z$. Since $\textrm{ad}^3(Z)=0$, we have already noticed that Proposition \ref{eigenvalues} applies
and hence that $\lieg$ decomposes as an $\mathfrak a$-module into a direct sum 
of irreducibles, each isomorphic to either $\mathbb V(0)$, $\mathbb V(1)$, or $\mathbb V(2)$. 
In particular, the eigenvalues of $A$ on $\lieg$ are integers between $-2$ and $2$.  The fact that 
$A\in\lieg_0$ implies that the decomposition into eigenspaces of $A$ of the individual grading components 
is given by \eqref{eigenspace_decomposition}.  From Proposition \ref{eigenvalues}, $\textrm{ad}(Z)$ and $\textrm{ad}(X)$, 
respectively $\frac{1}{2}\textrm{ad}(Z)$ and $\frac{1}{2}\textrm{ad}^2(X)$, induce inverse isomorphisms between $\lieg^{[-1]}$ and $\lieg^{[1]}$, respectively between $\lieg^{[-2]}$ 
and $\lieg^{[2]}$. Since $Z\in\lieg_1^{[2]}$ and $X\in\lieg_{-1}^{[-2]}$, we therefore 
conclude from \eqref{compatibility_grading_eigenspaces} 
that $(a)$, $(b)$, $(d)$ and the first statements of $(c)$ and $(e)$ hold 
(note that $(d)$ also follows immediately from the first statement of $(c)$, 
since $\lieg_{-1}^*\cong\lieg_1$ as $\lieg_0$-modules). 

To complete the proof of $(c)$ it remains to show that $\lieg_{-1}^{[0]}=C(Z)$. Suppose first that $W \in C(Z)$, so $[Z,W]=0$. Then the Jacobi identity and the fact that $\lieg_{-1}$ is an abelian subalgebra of $\lieg$ immediately 
imply that $[X, W]=0$ and $[A,W]=0$. Hence, $C(Z)\subset \lieg_{-1}^{[0]}$, and if $C(Z)\neq 0$, any choice of basis for $C(Z)$ identifies $C(Z)$ with a direct sum of copies of the trivial representation $\mathbb V(0)$ of $\mathfrak a$. 
Conversely, assume now that $W\in\lieg_{-1}^{[0]}$ does 
not generate a copy of the trivial representation, so $W \in \lieg_{-1}^{[0]}\setminus C(Z)$. Then $W$ has 
to be of the form $W=[X,V]$ for a highest weight vector $V\in\lieg^{[2]}$, which implies that $[X,W]=[X, [X,V]]\neq 0$. 
But this is impossible, since $\lieg_{-1}$ is abelian. Hence $C(Z)=\lieg_{-1}^{[0]}$.
 
Now let us prove the second assertion in $(e)$. Note first that $(a)$ implies that  $\lieg_0^{[1]}=[\lieg_{-1}^{[-1]}, Z]$. 
Since $\lieg_{-1}^{[0]}=C(Z)$ by $(c)$ and $\textrm{ad}^2(Z)(Y)\neq 0$ for any nonzero element $Y\in\lieg_{-1}^{[-2]}$ by $(b)$, 
we deduce that $\mathfrak z(Z)\cap[\lieg_{-1},Z]=\mathfrak z(Z)\cap[\lieg_{-1}^{[-1]},Z]$. The latter space in turn equals 
$[\lieg_{-1}^{[-1]}, Z]=\lieg_{0}^{[1]}$, since $\lieg_{0}^{[1]}$ consists of highest weight vectors.
Since $[\lieg_0^{[1]}, \lieg_0^{[1]}]\subset \lieg_{0}^{[2]}=\{0\}$ by \eqref{compatibility_grading_eigenspaces} 
and the first statement of $(e)$, the subspace $\lieg_{0}^{[1]}$ is 
an abelian subalgebra of $\lieg_0$.  Note $\mathfrak{z}(Z) \cap \lieg_0 \subset \lieg_0^{[0]} \oplus \lieg_0^{[1]}$ by \eqref{center_decomposition}; with the Jacobi identity, one sees that $\lieg_0^{[1]}$ is an abelian ideal in here, as claimed. 
\end{proof}

Suppose $0 \neq Z \in \lieg_1$ for $\lieg$ a $|1|$-graded semisimple Lie algebra, and let $X \in T(Z)$. From Proposition \ref{eigenvalues_on_grading_components}, the $1$-eigenspace $\lieg_0^{[1]}$ of $A=[Z,X]$ on $\lieg_0$ is an abelian subalgebra of $\lieg_0$. Hence, the exponential
group $G_{0}^{[1]}=\exp({\lieg_0^{[1]}})$ is an abelian subgroup of $G_0$, which acts on $\lieg_{-1}$ by restriction of 
the adjoint action. For any $U\in\lieg_{0}^{[1]}$ and any $Y\in\lieg_{-1}$,
\begin{equation}
\exp(U)(Y)=Y+[U, Y]+\frac{1}{2}[U,[U,Y]]\in\lieg_{-1},
\end{equation}
since $\textrm{ad}^3(U)(Y)=0$ by \eqref{compatibility_grading_eigenspaces} and $(c)$ of Proposition \ref{eigenvalues_on_grading_components}. 
The following result gives a description of the set $T(Z)$, which can be seen as a specialization to our setting of Kostant's description of $\algsl(2)$-triples in 
semisimple Lie algebras with the same nilpositive element (Theorem 3.6 of \cite{kostant.sl2.triples}).

\begin{proposition}\label{description_of_T(Z)}  
Suppose $\lieg=\lieg_{-1}\oplus\lieg_0\oplus\lieg_1$ is a complex or real $|1|$-graded semisimple Lie algebra and
let $Z\in\lieg_1$ be nonzero. Fix an element $X\in T(Z)$ and set $G_{0}^{[1]}=\exp(\lieg_0^{[1]})$.
Then $G_0^{[1]}$ acts simply transitively on $T(Z)$. In particular,
$T(Z)=\mbox{\emph{Ad}}(G_{0}^{[1]})(X) \subset\lieg_{-1}$.
\end{proposition}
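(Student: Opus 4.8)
The plan is to realize $T(Z)$ as a single free orbit by studying the orbit map $\Phi\colon G_0^{[1]}\to\lieg_{-1}$, $g\mapsto\textrm{Ad}(g)X$, and showing that its image is exactly $T(Z)$ and that $\Phi$ is injective. First I would verify that $\Phi$ takes values in $T(Z)$. By Proposition~\ref{eigenvalues_on_grading_components}(e) we have $\lieg_0^{[1]}\subset\mathfrak{z}(Z)$, so every $g\in G_0^{[1]}$ fixes $Z$ under $\textrm{Ad}$; since $g\in G_0$ also preserves the grading, $\textrm{Ad}(g)X\in\lieg_{-1}$. As $\textrm{Ad}(g)$ is a Lie algebra automorphism fixing $Z$, it sends the triple $\{X,[Z,X],Z\}$ to $\{\textrm{Ad}(g)X,[Z,\textrm{Ad}(g)X],Z\}$, again an $\algsl(2)$-triple with the same nilpositive element $Z$; hence $\textrm{Ad}(g)X\in T(Z)$.

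The main computational device is the $A$-weight decomposition $\lieg_{-1}=\lieg_{-1}^{[-2]}\oplus\lieg_{-1}^{[-1]}\oplus\lieg_{-1}^{[0]}$. For $U\in\lieg_0^{[1]}$ one has $\textrm{ad}^3(U)|_{\lieg_{-1}}=0$, so
$$\textrm{Ad}(\exp U)X=X+[U,X]+\frac{1}{2}[U,[U,X]],$$
and, as $X\in\lieg_{-1}^{[-2]}$ and $\textrm{ad}(U)$ raises the $A$-weight by one, the three summands lie in $\lieg_{-1}^{[-2]}$, $\lieg_{-1}^{[-1]}$, $\lieg_{-1}^{[0]}$ respectively. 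By Proposition~\ref{eigenvalues_on_grading_components}(a), $\textrm{ad}(X)$ restricts to an isomorphism from $\lieg_0^{[1]}$ onto $\lieg_{-1}^{[-1]}$ (the inverse of $\textrm{ad}(Z)$). Injectivity of this map yields freeness at once: if $\textrm{Ad}(\exp U)X=X$, comparing $\lieg_{-1}^{[-1]}$-components gives $[U,X]=0$, so $U=0$.

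For transitivity I would first pin down the leading term of an arbitrary $X'\in T(Z)$. With $A'=[Z,X']$ the triple relations give $\textrm{ad}^2(Z)X'=[Z,A']=-2Z$; since $\textrm{ad}^2(Z)$ annihilates $\lieg_{-1}^{[-1]}\oplus\lieg_{-1}^{[0]}$ and is injective on $\lieg_{-1}^{[-2]}$ (Proposition~\ref{eigenvalues_on_grading_components}(b)), and $\textrm{ad}^2(Z)X=-2Z$ as well, the $\lieg_{-1}^{[-2]}$-component of $X'$ must equal $X$. Thus $X'=X+Y_{-1}+Y_0$ with $Y_{-1}\in\lieg_{-1}^{[-1]}$ and $Y_0\in\lieg_{-1}^{[0]}=C(Z)$. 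Using the isomorphism above, I would choose the unique $U\in\lieg_0^{[1]}$ with $[U,X]=Y_{-1}$ and set $X''=\textrm{Ad}(\exp U)X\in T(Z)$; by construction $X'$ and $X''$ share their $\lieg_{-1}^{[-2]}$- and $\lieg_{-1}^{[-1]}$-components, so $W:=X'-X''\in\lieg_{-1}^{[0]}=C(Z)$.

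The crux, and the step I expect to be the main obstacle, is showing that this residual term $W$ vanishes---that is, that matching the weight $-1$ part already forces equality. The resolution I plan to use is a rigidity argument. Since $W\in C(Z)$ we have $[Z,X']=[Z,X'']$, so the semisimple elements $A'=[Z,X']$ and $A''=[Z,X'']$ coincide; call the common value $\bar A$. Subtracting the relations $[\bar A,X']=-2X'$ and $[\bar A,X'']=-2X''$ gives $[\bar A,W]=-2W$. Now $\bar A=A+B$ with $B=[Z,Y_{-1}]\in\lieg_0^{[1]}$, while $[A,W]=0$ because $W\in\lieg_{-1}^{[0]}$, and $[B,W]\in\lieg_{-1}^{[1]}=\{0\}$ by Proposition~\ref{eigenvalues_on_grading_components}(c). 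Hence $[\bar A,W]=0$, forcing $W=0$ and $X'=\textrm{Ad}(\exp U)X$. This establishes surjectivity, and together with freeness it proves that $G_0^{[1]}$ acts simply transitively on $T(Z)$, so $T(Z)=\textrm{Ad}(G_0^{[1]})(X)$. Finally I would remark that this is the graded incarnation of Kostant's Theorem~3.6 of \cite{kostant.sl2.triples}: the constraint $X'\in\lieg_{-1}$ cuts his unipotent stabilizer $\exp(\lieg^{[1]}\oplus\lieg^{[2]})$ of $Z$ down to its grading-preserving part $G_0^{[1]}$.
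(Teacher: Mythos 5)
Your proposal is correct and takes essentially the same approach as the paper: the paper likewise shows the orbit lies in $T(Z)$ via $\lieg_0^{[1]}\subset\mathfrak z(Z)$, constructs the connecting element $U=A-A'\in\lieg_0^{[1]}$ (which is exactly your $U=-[Z,Y_{-1}]$), reduces to a residual difference lying in $C(Z)$, kills it by exhibiting it as simultaneously a $-2$- and a $0$-eigenvector of the common semisimple element, and deduces freeness from the isomorphism $\textrm{ad}(X)\colon\lieg_0^{[1]}\to\lieg_{-1}^{[-1]}$. The only cosmetic difference is that the paper pins down $U$ by matching the semisimple elements (using $A-A'\in\mathfrak z(Z)\cap[Z,\lieg_{-1}]=\lieg_0^{[1]}$ from part (e) of Proposition \ref{eigenvalues_on_grading_components}) rather than the $\lieg_{-1}^{[-1]}$-components, so it never needs your $\textrm{ad}^2(Z)$ normalization of the leading term.
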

\begin{proof}
Suppose $U \in \lieg_0^{[1]}$, and set $X'=\exp(U)X$, $A'=\exp(U)(A)$, and $Z'=\exp(U)Z$. 
Since $[U,Z]=0$ by $(e)$ 
of Proposition \ref{eigenvalues_on_grading_components}, we have $Z'=Z$. 
Because $\exp(U)$ acts by a Lie algebra homomorphism, $X'$ is in $T(Z)$, with $A'=[Z,X']$. 

Conversely, suppose $X' \in T(Z)$ and set $A'=[Z,X']$. Since $[A, Z]=2Z=[A', Z]$, 
we have $A-A'\in\mathfrak z(Z)\cap\lieg_0$. From $[Z, X-X']=A-A'$, we see that $A-A'\in[Z,\lieg_{-1}]$. Therefore, it follows from $(e)$ of
Proposition \ref{eigenvalues_on_grading_components} that $A-A' \in \lieg_0^{[1]}$. For $U=A-A'\in\lieg_0^{[1]}$ we therefore obtain
$$\exp(U)(A)=A+[A-A', A]=A+A'-A=A'.$$ Set $X''=\exp(U)(X)$. Then $X''$ and $X'$ are both elements of $T(Z)$ such that 
$[Z,X'']=A'=[Z,X']$. Hence, $X''-X'\in C(Z)$. But $X'-X''\in\lieg_{-1}$ is also in the $-2$-eigenspace of $A'$ 
on $\lieg_{-1}$, which implies by $(c)$ of Proposition \ref{eigenvalues_on_grading_components} that $X''=X'$.
Hence, $G_0^{[1]}$ acts transitively on $T(Z)$. 

It remains to show that the action is free.
Note that $\exp(U)(X)=X\in\lieg_{-1}^{[-2]}$ for some $U\in\lieg_{0}^{[1]}$ if and only if 
the elements $[U, X]\in\lieg_{-1}^{[-1]}$ and $[U,[U,X]]\in\lieg_{-1}^{[0]}$ are zero. 
Since $\textrm{ad}(X)$ induces an isomorphism  between 
$\lieg_0^{[1]}$ and $\lieg_{-1}^{[-1]}$ by $(a)$ of Proposition \ref{eigenvalues_on_grading_components}, 
these elements vanish if and only if $U=0$. Since the action is transitive, it now follows that the action is free.
\end{proof}

For simple $|1|$-graded Lie algebras, the description of $T(Z)$ in Proposition 
\ref{description_of_T(Z)} implies:

\begin{corollary} \label{intersection}
Suppose $\lieg=\lieg_{-1}\oplus\lieg_0\oplus\lieg_1$ is a complex or real $|1|$-graded simple Lie algebra, and
let $0 \neq Z \in \lieg_1$. Denote by $S$ the set of all elements in $\lieg_{-1}$ that are contained in the $-2$-eigenspace 
of a semisimple element $A=[Z,X]$ for some $X\in T(Z)$. Then the linear span of $S$ coincides with $\lieg_{-1}$. In particular, if $\alpha$ is 
an element of $\Lambda^2\lieg_{-1}^*$ such that $\alpha\lrcorner Y=0$ for all elements $Y\in S$, then $\alpha=0$.
\end{corollary}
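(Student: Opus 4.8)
The plan is to prove the equivalent, and more natural, assertion that the linear span $W$ of $S$ equals all of $\lieg_{-1}$; the final statement about $\alpha$ is then immediate, since $\alpha\lrcorner Y=0$ for every $Y\in S$ puts every element of the spanning set $S$, hence all of $\lieg_{-1}$, into the radical of $\alpha$.

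First I would make $S$ explicit using Proposition \ref{description_of_T(Z)}. Fix $X\in T(Z)$ and $A=[Z,X]$, and recall from Proposition \ref{eigenvalues_on_grading_components} that $\lieg^{[-2]}=\lieg_{-1}^{[-2]}$, since $-2$ is not an eigenvalue of $A$ on $\lieg_0$ or $\lieg_1$. For $U\in\lieg_0^{[1]}$ one has $\exp(U)Z=Z$ and $A'=[Z,\exp(U)X]=\exp(U)A$, so the $-2$-eigenspace of $A'$ on $\lieg_{-1}$ is $\exp(\textrm{ad}\,U)\bigl(\lieg_{-1}^{[-2]}\bigr)$. As $T(Z)=\textrm{Ad}(G_0^{[1]})(X)$, the set $S$ is exactly the union over $U\in\lieg_0^{[1]}$ of the subspaces $\exp(\textrm{ad}\,U)\bigl(\lieg_{-1}^{[-2]}\bigr)$. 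For $Y_0\in\lieg_{-1}^{[-2]}$ the series $\exp(\textrm{ad}\,U)Y_0=Y_0+[U,Y_0]+\tfrac12[U,[U,Y_0]]$ terminates, with graded pieces in $\lieg_{-1}^{[-2]},\lieg_{-1}^{[-1]},\lieg_{-1}^{[0]}$. Replacing $U$ by $tU$ and extracting coefficients in $t$, each graded piece lies in $W$ separately, and using that $\lieg_0^{[1]}$ is abelian (Proposition \ref{eigenvalues_on_grading_components}(e)) the quadratic terms span $[\lieg_0^{[1]},[\lieg_0^{[1]},\lieg_{-1}^{[-2]}]]$. I therefore obtain
\begin{equation*}
W=\lieg_{-1}^{[-2]}\oplus[\lieg_0^{[1]},\lieg_{-1}^{[-2]}]\oplus Q,\qquad Q:=[\lieg_0^{[1]},[\lieg_0^{[1]},\lieg_{-1}^{[-2]}]].
\end{equation*}
By part (a) of Proposition \ref{eigenvalues_on_grading_components}, $\textrm{ad}(X)\colon\lieg_0^{[1]}\to\lieg_{-1}^{[-1]}$ is an isomorphism and $X\in\lieg_{-1}^{[-2]}$, so the middle summand is all of $\lieg_{-1}^{[-1]}$, whence $Q=[\lieg_0^{[1]},\lieg_{-1}^{[-1]}]\subseteq\lieg_{-1}^{[0]}$.

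The heart of the argument is to show that $W$ is a $\lieg_0$-submodule of $\lieg_{-1}$. Writing $\lieg_0=\lieg_0^{[-1]}\oplus\lieg_0^{[0]}\oplus\lieg_0^{[1]}$ and using the grading compatibility \eqref{compatibility_grading_eigenspaces}, invariance under $\lieg_0^{[-1]}$ and $\lieg_0^{[1]}$ is formal: since $W$ contains all of $\lieg_{-1}^{[-2]}$ and $\lieg_{-1}^{[-1]}$, these operators send each summand of $W$ into summands already contained in $W$ (note $[\lieg_0^{[1]},\lieg_{-1}^{[-1]}]=Q$ and $[\lieg_0^{[1]},\lieg_{-1}^{[0]}]\subseteq\lieg_{-1}^{[1]}=0$). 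The only nontrivial point is $[\lieg_0^{[0]},Q]\subseteq Q$, which follows from the Jacobi identity: for $H\in\lieg_0^{[0]}$,
\begin{equation*}
[H,[\lieg_0^{[1]},\lieg_{-1}^{[-1]}]]\subseteq[[H,\lieg_0^{[1]}],\lieg_{-1}^{[-1]}]+[\lieg_0^{[1]},[H,\lieg_{-1}^{[-1]}]]\subseteq[\lieg_0^{[1]},\lieg_{-1}^{[-1]}]=Q,
\end{equation*}
again using \eqref{compatibility_grading_eigenspaces} to keep the brackets inside $\lieg_0^{[1]}$ and $\lieg_{-1}^{[-1]}$. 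Hence $W$ is $\lieg_0$-invariant.

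To finish I would invoke simplicity: for a simple $|1|$-graded $\lieg$ the module $\lieg_{-1}$ is an irreducible $\lieg_0$-module \cite{cap.slovak.book.vol1}. Since $W$ is a nonzero ($X\in W$) $\lieg_0$-submodule, $W=\lieg_{-1}$, as claimed. I expect the main obstacle to be the identification $Q=[\lieg_0^{[1]},\lieg_{-1}^{[-1]}]$ together with the verification of its $\lieg_0^{[0]}$-invariance, as everything else is bookkeeping with the eigenspace grading or the standard irreducibility input. It is precisely this last input that fails for semisimple but non-simple $\lieg$—where $Z$ may lie in a single simple ideal and $W$ fills only that ideal's $\lieg_{-1}$—which accounts for the simplicity hypothesis.
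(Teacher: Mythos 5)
Your proof is correct, but it closes the argument by a genuinely different mechanism than the paper. The first half coincides with the paper's proof: both you and the authors use Proposition \ref{description_of_T(Z)} to write $S=\bigcup_{U\in\lieg_0^{[1]}}\exp(\textrm{ad}\,U)\bigl(\lieg_{-1}^{[-2]}\bigr)$, separate the three graded pieces of $\exp(\textrm{ad}\,U)Y$ (you by scaling $U\mapsto tU$ and extracting coefficients, the paper by combinations such as $\exp(U)X-\exp(-U)X=2[U,X]$ together with $\lieg_{-1}^{[-2]}\subset S$), and both use the same polarization identity on the abelian algebra $\lieg_0^{[1]}$ to identify the span of the quadratic terms with $Q=[\lieg_0^{[1]},\lieg_{-1}^{[-1]}]$. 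The divergence is in how simplicity is used to capture $\lieg_{-1}^{[0]}=C(Z)$. The paper proves the sharper identity $[\lieg_0^{[1]},\lieg_{-1}^{[-1]}]=\lieg_{-1}^{[0]}$ directly: it verifies that $[\lieg^{[-2]},\lieg^{[2]}]\oplus[\lieg^{[-1]},\lieg^{[1]}]\oplus\bigoplus_{j\neq0}\lieg^{[j]}$ is a nonzero ideal of $\lieg$, so that simplicity forces $[\lieg^{[-1]},\lieg^{[1]}]$ and $[\lieg^{[-2]},\lieg^{[2]}]$ to span $\lieg^{[0]}$, and then observes via Proposition \ref{eigenvalues_on_grading_components} $(d)$ that the only contribution to the $\lieg_{-1}$-component is $[\lieg_{-1}^{[-1]},\lieg_0^{[1]}]$. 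You instead show that the span $W$ is a $\lieg_0$-submodule---your Jacobi computation for $[\lieg_0^{[0]},Q]\subseteq Q$ is the only non-formal step, and it is right---and then quote irreducibility of $\lieg_{-1}$ as a $\lieg_0$-module. That fact is indeed standard for $|1|$-graded simple $\lieg$ (it is the origin of the name ``irreducible parabolic geometry''), but you should note that you need it over the ground field actually in play: for genuinely real forms it follows by complexifying, and when $\lieg$ is a complex algebra viewed as real (as for c-projective structures) one needs the small extra observation that a complex-linear, complex-irreducible, nontrivial action is also real-irreducible. The trade-off: the paper's route is self-contained and identifies $Q$ exactly as $C(Z)$, while yours is shorter and more conceptual at the final step but imports an external, if standard, representation-theoretic fact. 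Your closing remark explaining why simplicity is indispensable (a $Z$ lying in one simple ideal of a semisimple $\lieg$ only fills that ideal's $\lieg_{-1}$) matches the paper's hypothesis precisely.
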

\begin{proof} Fix $X\in T(Z)$ and set $A=[Z,X]$. By Proposition \ref{description_of_T(Z)},
$$S=\{\exp(\lieg_0^{[1]})(Y): Y\in\lieg_{-1}^{[-2]}\}.$$ 
Note that for any $U\in\lieg_{0}^{[1]}$ and $Y\in\lieg_{-1}^{[-2]}$,
\begin{equation}
\exp(U)(Y)=Y+[U, Y]+\frac{1}{2}[U[U,Y]]\in\lieg_{-1},
\end{equation}
where $[U,Y]\in\lieg_{-1}^{[-1]}$ and $\frac{1}{2}[U,[U,Y]]\in\lieg_{-1}^{[0]}=C(Z)$. By definition $S$ contains $\lieg_{-1}^{[-2]}$, which obviously
equals $\exp(0)(\lieg_{-1}^{[-2]})$. By $(a)$ of Proposition \ref{eigenvalues_on_grading_components} the map $\textrm{ad}(X)$ induces 
an isomorphism $\lieg_{0}^{[1]}\cong\lieg_{-1}^{[-1]}$. Since $\exp(U)(X)-\exp(-U)(X)=2[U,X]$ for any $U\in\lieg_{0}^{[1]}$, 
we therefore deduce that $\lieg_{-1}^{[-1]}$ is contained in the span of $S$. This immediately implies that also all elements in 
$\lieg_{-1}^{[0]}$ of the form 
\begin{equation}\label{e1}
[U,[U,Y]]\quad \textrm{ for } U\in\lieg_{0}^{[1]} \textrm{ and } Y\in\lieg_{-1}^{[-2]}
\end{equation}
are contained in the span of $S$. 

It remains to show that the elements in (\ref{e1}) span $\lieg_{-1}^{[0]}=C(Z)$. 
Let $U_1$ and $U_2$ be elements in $\lieg_{0}^{[1]}$ and $Y$ be an element in $\lieg_{-1}^{[-2]}$. Then
by the Jacobi identity and $(e)$ of Proposition \ref{eigenvalues_on_grading_components} we have
$[U_1,[U_2, Y]]=[U_2,[U_1,Y]]$, which implies that 
$$[U_1+U_2, [U_1+U_2, Y]=[U_1,[U_1, Y]]+2[U_1,[U_2,Y]]+[U_2,[U_2, Y]].$$
Therefore, $[U_1,[U_2,Y]]$ can be written as a linear combination of elements 
of the form (\ref{e1}).  Since $[\lieg_{0}^{[1]},\lieg_{-1}^{[-2]}]=\lieg_{-1}^{[-1]}$ by $(a)$ of 
Proposition \ref{eigenvalues_on_grading_components}, it is therefore sufficient to show that 
$[\lieg_{0}^{[1]},\lieg_{-1}^{[-1]}]=\lieg_{-1}^{[0]}$. By the Jacobi identity one verifies directly that
\begin{equation*}
[\lieg^{[-2]},\lieg^{[2]}]\oplus[\lieg^{[-1]},\lieg^{[1]}]\oplus\bigoplus_{j\neq 0}\lieg^{[j]}
\end{equation*}
is an nonzero ideal in $\lieg$. Since $\lieg$ is simple, we therefore deduce that $[\lieg^{[-2]},\lieg^{[2]}]$
and $[\lieg^{[-1]},\lieg^{[1]}]$ span $\lieg^{[0]}$. Hence, $(d)$ of Proposition \ref{eigenvalues_on_grading_components}
implies in particular that $[\lieg^{[-1]}_{-1},\lieg^{[1]}_{0}]=\lieg_{-1}^{[0]}$.
\end{proof}

\begin{remark} Suppose $\lieg$ is a simple Lie algebra equipped with a $|1|$-grading. Then $\lieg^0 = \liep$ corresponds to a single simple root, so $\mathfrak z(\lieg_0)$ is $1$-dimensional (see \cite[Thm 3.2.1, Prop 3.2.7]{cap.slovak.book.vol1}). Note that for any $0\neq Z\in\lieg_1$ and $X\in T(Z)$, 
the action of the corresponding semisimple element $A=[Z,X]\in\lieg_0$ on $\lieg$ might not have $1$ as an eigenvalue, in which case $-1$ is also not an eigenvalue.  If 
$\lieg^{[1]}=\lieg^{[-1]}=\{0\}$, then $(d)$ of Proposition \ref{eigenvalues_on_grading_components} gives $\lieg_0=\lieg_0^{[0]}$, and hence 
$A\in\mathfrak z(\lieg_0)$. Then $A$ must be twice the grading element, and $\lieg_{-1}=\lieg_{-1}^{-2}$.
\end{remark}

Recall that for any irreducible parabolic geometry the curvature has values in the $P$-module
$$\mathbb W=\Lambda^2(\lieg/\mathfrak p)^*\otimes\lieg\cong \Lambda^2\lieg_{1}\otimes\lieg,$$ 
Since $(\lieg/\mathfrak p)^*$ is completely reducible as a $P$-module and isomorphic to $\lieg_{-1}^*$ as $G_0$-module, 
we can make $\lieg_{-1}^*$ into a $P$-module isomorphic to $(\lieg/\liep)^*$ by defining the action 
of $P_+$ on $\lieg_{-1}^*$ to be trivial. In this way, $\Lambda^2\lieg_{-1}^*\otimes\lieg$  can be viewed as a $P$-module, 
which is isomorphic to $\mathbb W$. Moreover, identifying $\Lambda^2\lieg_{-1}^*\otimes\lieg$ with $\mathbb W$,
the grading on $\lieg$ induces a vector space grading on $\mathbb W$
by homogeneities as follows: 
\begin{equation}\label{grading_on_W}
\mathbb W=\mathbb W_1\oplus\mathbb W_2\oplus \mathbb W_2,
\end{equation}
where $\mathbb W_i=\Lambda^2\lieg_{-1}^*\otimes\lieg_{-2+i}$ for $i=1,2, 3$.
Note that $\mathbb W_i$ is a $G_0$-submodule of $\mathbb W$. 

We shall later need
some facts about the eigenspace decomposition of $\mathbb W$ 
under the action of $A=[Z,X]$, for $Z\in\lieg_1$ and $X\in T(Z)$. Since $A\in\lieg_0$, the action of 
$A$ preserves the decomposition \eqref{grading_on_W} of $\mathbb W$.  The eigenspace 
decomposition of each homogeneous component of $\mathbb W$ with respect to the action of $A$ 
is therefore of the form:
\begin{equation*}
\mathbb W_i=\bigoplus_{j\in\bold Z}\mathbb W^{[j]}_i\qquad 
\textrm{ with }\quad \mathbb W^{[j]}_i=\mathbb W_i\cap\mathbb W^{[j]} \qquad i = 1,2,3.
\end{equation*}

\begin{proposition}\label{eigenspaces_on_curvature} 
Suppose $\lieg=\lieg_{-1}\oplus\lieg_0\oplus\lieg_1$ is a complex or real $|1|$-graded semisimple Lie algebra, and
let $Z\in\lieg_1$ be nonzero. Fix an element $X\in T(Z)$ and set $A=[Z,X]$. Then for the action of $A$ on $\mathbb W$, the following holds:
\begin{itemize}
\item[(a)] $\mathbb W^{[st]}\cap(\Lambda^2\lieg_{-1}^*\otimes\bigoplus_{i\geq -1}\lieg^{[i]})\subset\ker(\llcorner Y)$ 
for all $Y\in\lieg_{-1}^{[-2]}$
\item[(b)] $\mathbb W^{[ss]}\subset\ker(\llcorner Y)$ and $Z\ker(\llcorner Y)\subset\ker(\llcorner Y)$
for all $Y\in\lieg_{-1}^{[-2]}$
\item[(c)] $\mathbb W^{[1]}_3\subset\ker(\llcorner Y)$
for all $Y\in\lieg_{-1}^{[-2]}$,
\end{itemize}
\end{proposition}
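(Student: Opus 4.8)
The entire statement rests on one contraction lemma about how $\lieg_{-1}^*$ pairs with the lowest-weight line of $\lieg_{-1}$; once it is in place, each of $(a)$--$(c)$ is a short eigenvalue count.

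\emph{The contraction lemma.} First I would record how $A$ acts on the factor $\lieg_{-1}^*$. Since the natural pairing $\lieg_{-1}^*\times\lieg_{-1}\to\bold{K}$ is $A$-invariant, the eigenspace $(\lieg_{-1}^*)^{[k]}$ pairs nontrivially only with $\lieg_{-1}^{[-k]}$. By $(c)$ of Proposition \ref{eigenvalues_on_grading_components} the eigenvalues of $A$ on $\lieg_{-1}$ are $-2,-1,0$, so those on $\lieg_{-1}^*$ are $0,1,2$, all nonnegative, and for $Y\in\lieg_{-1}^{[-2]}$ the contraction $\llcorner Y$ annihilates every covector outside $(\lieg_{-1}^*)^{[2]}$. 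A two-form in the $A$-eigenspace $(\Lambda^2\lieg_{-1}^*)^{[m]}$ is a sum of wedges $\xi\wedge\zeta$ with $\deg\xi+\deg\zeta=m$, each degree in $\{0,1,2\}$; if $m\leq 1$ then neither factor can have degree $2$, whence $(\xi\wedge\zeta)\llcorner Y=\xi(Y)\zeta-\zeta(Y)\xi=0$. This yields the lemma I will use repeatedly:
\begin{equation*}
(\Lambda^2\lieg_{-1}^*)^{[m]}\otimes\lieg\subset\ker(\llcorner Y)\qquad\text{for all }m\leq 1,\ Y\in\lieg_{-1}^{[-2]}.
\end{equation*}

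\emph{Parts $(a)$, $(c)$, and the first claim of $(b)$.} I would decompose any element of $\mathbb W$ into joint eigencomponents in $(\Lambda^2\lieg_{-1}^*)^{[m]}\otimes\lieg^{[n]}$, for which the total $A$-eigenvalue is $m+n$ and $m\geq 0$ always. By the lemma and linearity of $\llcorner Y$ it then suffices to verify $m\leq 1$ for every eigencomponent that occurs. For $(a)$, membership in $\mathbb W^{[st]}$ gives $m+n\leq 0$ and the hypothesis $n\geq -1$ gives $m\leq -n\leq 1$; here the restriction to $\bigoplus_{i\geq -1}\lieg^{[i]}$ is exactly what is needed, since allowing $n=-2$ would permit $m=2$. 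For the first claim of $(b)$, membership in $\mathbb W^{[ss]}$ gives $m+n\leq -1$, while the global bound $n\geq -2$ of Proposition \ref{eigenvalues} already forces $m\leq -1-n\leq 1$, so no restriction on the $\lieg$-factor is required. For $(c)$, the component $\mathbb W_3^{[1]}$ has its $\lieg$-factor in $\lieg_1$, so $n\geq 0$ by $(d)$ of Proposition \ref{eigenvalues_on_grading_components}, and $m+n=1$ then gives $m\leq 1$. In each case the lemma applies.

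\emph{The second claim of $(b)$.} Here I would prove $Z$-invariance of $\ker(\llcorner Y)$. In the fixed $P$-module structure on $\mathbb W=\Lambda^2\lieg_{-1}^*\otimes\lieg$, the radical $P_+$---in particular $Z\in\lieg_1$---acts trivially on the factor $\lieg_{-1}^*\cong(\lieg/\liep)^*$, hence trivially on $\Lambda^2\lieg_{-1}^*$; so $Z$ acts on $\mathbb W$ only through $\text{ad}(Z)$ on the $\lieg$-factor. Since $\llcorner Y$ touches only the form factor, it commutes with this action, giving $(Z\cdot\phi)\llcorner Y=Z\cdot(\phi\llcorner Y)$ for all $\phi$. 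Thus $\phi\llcorner Y=0$ implies $(Z\cdot\phi)\llcorner Y=0$, that is, $Z\ker(\llcorner Y)\subset\ker(\llcorner Y)$.

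\emph{The main obstacle.} There is little difficulty beyond the contraction lemma: the real content is the observation that contraction by the lowest-weight vector $Y$ detects only the top-weight covector direction $(\lieg_{-1}^*)^{[2]}$, so that form-eigenvalue $\leq 1$ already forces vanishing. The only point demanding care is the bookkeeping that ties each hypothesis---the homogeneity $\mathbb W_3$, the stable and strongly stable conditions, and the range restriction on the $\lieg$-factor---precisely to the bound $m\leq 1$, using the eigenvalue ranges of Proposition \ref{eigenvalues_on_grading_components}.
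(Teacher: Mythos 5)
Your proof is correct and is in substance the paper's own: all three parts come down to the same eigenvalue bookkeeping from Propositions \ref{eigenvalues} and \ref{eigenvalues_on_grading_components}, and your argument for the second claim of $(b)$ --- that $Z$ acts only through the $\lieg$-factor and hence commutes with $\llcorner Y$ --- is exactly theirs. The only difference is presentational: you decompose $\Lambda^2\lieg_{-1}^*$ into $A$-eigenspaces and isolate a contraction lemma, whereas the paper evaluates a form $\alpha$ on eigenvector pairs $(Y,V)$ and reads off the $A$-eigenvalue of the value $\alpha(Y,V)\in\lieg$ from the Leibniz rule, then invokes the same eigenvalue bounds on $\lieg$; these are dual formulations of the identical computation.
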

\begin{proof}
$(a)$ Suppose first that $\alpha$ is an element of $\mathbb W^{[-j]}\cap(\Lambda^2\lieg_{-1}^*\otimes\bigoplus_{i\geq -1}\lieg^{[i]})$ 
for some integer $j\geq 0$. Recall that by $(c)$ of Proposition \ref{eigenvalues_on_grading_components}  the possible 
eigenvalues of $A$ on $\lieg_{-1}$ are $-2$, $-1$, and $0$. Now let $Y \in \lieg_{-1}^{[-2]}$ and $V \in \lieg_{-1}^{[-\ell]}$ for some integer $0\leq\ell\leq 2$. 
Then 
$$-j\alpha(Y,V)=(A\cdot\alpha)(Y,V)=[A,\alpha(Y,V)]-\alpha(AY, V)-\alpha(Y, AV)$$
and so
\begin{equation}\label{f1}
[A,\alpha(Y, V)]=-(j+2+\ell)\alpha(Y,V).
\end{equation}
But $-(j+2+\ell)\leq -2$, which contradicts the assumption that $\alpha$ has values in $\bigoplus_{i\geq -1}\lieg^{[i]}$, 
unless $\alpha(Y,V)= 0$. Now $(a)$ follows by linearity.
\\$(b)$ 
Note that for $\alpha\in\mathbb W^{[-j]}\subset \mathbb W^{[ss]}$ with $j>0$ some integer, the equation \eqref{f1} takes the form
$[A,\alpha(Y, V)]=-(j+2+\ell)\alpha(Y,V)$ with $-(j+2+\ell)< -2$. Since all eigenvalues on $\lieg$ are $\geq-2$ by Proposition 
\ref{eigenvalues}, we conclude that $\alpha(Y,V)=0$. Hence, by linearity the first statement of $(b)$ holds.
Since $Z$ acts trivially on $\Lambda^2\lieg_{-1}^*$, its action on $\mathbb W$ preserves the space of forms in $\mathbb W$ that vanish 
upon insertion of an element of $\lieg_{-1}^{[-2]}$; this completes $(b)$. 

For $(c)$, note that for $\alpha\in\Bbb W_3^{[1]}$, the equation (\ref{f1}) reads as 
$$[A,\alpha(Y, V)]=-(1+\ell)\alpha(Y,V) \qquad l \geq 0,$$
which implies that $\alpha(Y,V)=0$, since $A$ has nonnegative eigenvalues on $\lieg_1$ by $(d)$ of Proposition 
\ref{eigenvalues_on_grading_components}. Again by linearity, $\alpha\in\ker(\llcorner Y)$
for all $Y\in\lieg_{-1}^{[-2]}$.
\end{proof}

\subsection{Fundamental derivative}
In the sequel we will use the notion of the fundamental derivative of parabolic geometries.
Let us therefore recall its definition and basic properties; for more details see for instance
\cite{cap.slovak.book.vol1}, \cite{calderbank.diemer} or \cite{me.frobenius}. Suppose 
$(B\stackrel{\pi}{\rightarrow} M,\omega)$ is a parabolic geometry. Consider an associated vector 
bundle $V=B\times_P\mathbb V$ and identify its space of sections with
$P$-equivariant smooth functions $C^\infty(B,\mathbb V)^P$. Differentiating equivariant
functions $f\in C^\infty(B,\mathbb V)^P$ in the direction of $\omega$-constant vector fields gives 
rise to a differential operator
\begin{center}
$D: C^\infty(B,\mathbb V)^P\rightarrow C^\infty(B,\lieg^*\otimes \mathbb V)^P$
\\
$Df(b)(X)=(\tilde X\cdot f)(b)$,
\end{center}
where $\tilde X$ denotes the $\omega$-constant vector field with value $X\in\lieg$. 
It is called the \emph{fundamental derivative} on $V$.

Since the fundamental derivative is defined on any associated bundle, we can iterate it and define the $m^{\mathrm{th}}$ fundamental derivative on $V$ by
\begin{center}
$D^m: C^\infty(B,\mathbb V)^P\rightarrow C^\infty(B,\otimes^m\lieg^*\otimes \mathbb V)^P$
\\
$D^mf(b)(X_1,\ldots,X_m)=(\tilde X_1 \cdots \tilde X_m\cdot f)(b)$.
\end{center}
Observe first that for any $X\in\liep$, the corresponding $\omega$-constant vector field 
$\tilde X$ is the fundamental vector field generated by the right action of $X$ on $B$. Thus 
\begin{equation}\label{D_Property 1}
Df(b)(X)=(\tilde{X} \cdot f)(b)=-X(f(b))\quad \textrm{ for all } X\in\liep,
\end{equation}
which implies that for all $m\geq 1$, $X_1,\ldots, X_{m-1}\in\lieg$, and $X_m\in\liep$,
\begin{equation}\label{D_Property 2}
D^mf(b)(X_1,\ldots,X_m)=-X_m(D^{m-1}(b)(X_1,\ldots,X_{m-1}))
\end{equation}

Since the curvature $\kappa$ of a Cartan geometry satisfies 
$\omega^{-1}([X_1, X_2]-\kappa(X_1, X_2))=[\tilde X_1,\tilde X_2]$, the second fundamental derivative satisfies the following property for all $X_1, X_2\in\lieg$, which is sometimes referred 
to as the \emph{Ricci identity of the fundamental derivative}:
\begin{equation}\label{Ricci_identity}
D^2f(X_1,X_2)-D^2f(X_2, X_1)=Df([X_1,X_2])-Df(\kappa(X_1,X_2)).
\end{equation}
Because the curvature is semibasic, it follows that 
\begin{equation}\label{Ricci_identity2}
D^2f(X_1,X_2)-D^2f(X_2, X_1)=Df([X_1,X_2]) \qquad \forall X_1\in\lieg, X_2\in\liep.
\end{equation}

Recall that a vector field $\tilde\eta\in\mathfrak{inf}(B,\omega)$ commutes 
with all $\omega$-constant vector fields. Hence, if $\tilde\eta\cdot f=0$ for some equivariant function 
$f\in C^\infty(B,\mathbb V)^P$, then $\tilde\eta\cdot (D^mf)=0$ for all 
$m\geq 0$, which in turn is equivalent to
$$ (D^m f)(\varphi^t_{\tilde{\eta}} b)  = (D^mf)(b)\quad\textrm{ for all } b\in B.$$

Since any infinitesimal automorphism of a Cartan geometry 
preserves the curvature function, 
\begin{equation}\label{Inf_aut_preserves_curvature}
\tilde\eta\cdot D^m\kappa=0\quad\textrm{ for all } m\geq0, \ \tilde\eta\in\mathfrak{inf}(B,\omega)
\end{equation}
Assume now that $\tilde\eta\in\mathfrak{inf}(B,\omega)$ has a higher-order zero at 
$x_0\in M$, and let $Z$ be the isotropy with respect to $b_0 \in \pi^{-1}(x_0)$.
Then equations (\ref{D_Property 1}) and (\ref{Inf_aut_preserves_curvature}) imply that at the point $b_0$,
\begin{equation}\label{Inf_aut_preserves_curvature2}
0=(\tilde\eta\cdot D^m\kappa)(b_0)=-Z(D^m\kappa(b_0)).
\end{equation}
Note that for any $X\in\lieg$, the derivative $D^m\kappa(b_0)(X,\ldots,X)$ equals the $m^{\mathrm{th}}$ 
derivative at $0$ of $\kappa$ along the exponential curve $\tilde{\gamma}_X = \exp_{b_0}(sX)$---that is,
$$D^m\kappa(b_0)(X,\ldots,X)=\left. \frac{d^m}{ds^m}\right|_{s=0}\kappa(\tilde{\gamma}_X(s).$$
 
The properties of the fundamental derivative mentioned above and the identity (\ref{Inf_aut_preserves_curvature2}) 
give the following information on the derivatives at $0$ of $\kappa$ along the curves in $\mathcal T(\alpha)$:

\begin{proposition}\label{derivatives_of_curvature_at_b0} 
Suppose $(B\stackrel{\pi}{\rightarrow} M,\omega)$ is a parabolic geometry of type $(\lieg, P)$. 
Assume $0 \neq \tilde\eta\in\mathfrak{inf}(B,\omega)$ has a higher-order zero at $x_0\in M$. 
Let $Z$ be the istropy with respect to $b_0 \in \pi^{-1}(x_0)$. Assume in addition 
that there exists $X\in T(Z)\subset\lieg_-$ such that $A=[Z,X]\in\lieg_0$. Then:

\begin{enumerate}
\item[(a)] $Z\kappa(b_0)=0$
\item[(b)] $Z(D\kappa(b_0)(X))=-A\kappa(b_0)$
\item[(c)] $Z(D^2\kappa(b_0)(X,X))=-2(A+Id)(D\kappa(b_0)(X))$
\end{enumerate}
\end{proposition}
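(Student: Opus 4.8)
The plan is to exploit the two foundational identities already assembled: the invariance relation \eqref{Inf_aut_preserves_curvature2}, which says $Z(D^m\kappa(b_0))=0$ for every $m$, and the properties \eqref{D_Property 1}, \eqref{D_Property 2}, together with the Ricci identity \eqref{Ricci_identity}. Throughout, the key structural fact is that $A=[Z,X]$ and $Z\in\liep$, so that differentiating in the $\tilde X$-direction and then applying $Z$ can be reorganized using the commutator $[Z,X]=A$. I would treat the three statements as successive applications of the same bookkeeping, each feeding into the next by increasing the number of $X$-slots.

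First, statement (a) is immediate: it is exactly \eqref{Inf_aut_preserves_curvature2} in the case $m=0$, since $D^0\kappa=\kappa$ and the equation reads $0=-Z(\kappa(b_0))$. For (b) I would start from $0=Z(D\kappa(b_0))$, which is \eqref{Inf_aut_preserves_curvature2} at $m=1$ as an identity of maps $\lieg\to\mathbb W$. The point is that ``$Z(D\kappa(b_0))$'' denotes the action of $Z\in\liep$ on the equivariant function $D\kappa$, i.e.\ $-\tilde Z\cdot(D\kappa)$ evaluated at $b_0$, and I want to relate this to $Z(D\kappa(b_0)(X))$, the $Z$-action on the value in the slot $X$. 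These differ precisely by the derivative of the slot: since $D\kappa(b_0)(X)$ is the function $b\mapsto D\kappa(b)(X)$ with a fixed $X\in\lieg$, the Leibniz-type interaction between the $Z$-action on $\mathbb W$-values and on the $\lieg^*$-slot produces the term $-D\kappa(b_0)([Z,X])=-D\kappa(b_0)(A)$. Now by \eqref{D_Property 1}, since $A\in\liep$, one has $D\kappa(b_0)(A)=-A(\kappa(b_0))=-A\kappa(b_0)$. Combining, $Z(D\kappa(b_0)(X))=-A\kappa(b_0)$, as claimed. Concretely I would write the $\liep$-action on $\otimes\lieg^*\otimes\mathbb W$ and read off that the slot contributes via the coadjoint action, giving the $[Z,X]$ term.

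For (c) I would iterate the same mechanism with two $X$-slots, now invoking $0=Z(D^2\kappa(b_0))$. Acting with $Z$ on the value $D^2\kappa(b_0)(X,X)$ and transferring $Z$ past the two frozen $X$-slots yields, by the same coadjoint bookkeeping as in (b), the two terms coming from inserting $[Z,X]=A$ into each slot, namely $-2\,D^2\kappa(b_0)(A,X)$ (using symmetry of the two $X$-arguments), plus the pure value-action term which vanishes by the invariance identity. It then remains to identify $D^2\kappa(b_0)(A,X)$ with $(A+\mathrm{Id})(D\kappa(b_0)(X))$. Here I would use \eqref{D_Property 2} with $X_m=A\in\liep$, which gives $D^2\kappa(b_0)(A,X)=-A(D\kappa(b_0)(X))$ up to a correction coming from the non-commutativity of the slots; the extra $\mathrm{Id}$ term should emerge from the Ricci identity \eqref{Ricci_identity2} applied with $X_1=X,\ X_2=A$, whose right-hand side is $D\kappa(b_0)([X,A])=D\kappa(b_0)(2X)=2\,D\kappa(b_0)(X)$, accounting for the shift. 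Assembling these pieces gives $Z(D^2\kappa(b_0)(X,X))=-2(A+\mathrm{Id})(D\kappa(b_0)(X))$.

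The main obstacle is the careful bookkeeping of the $\liep$-action in (b) and especially (c): one must track exactly how the operator $Z$ commutes past the fixed arguments $X$ in $D^m\kappa(b_0)(X,\dots,X)$, distinguishing the action on the $\mathbb W$-values (which the invariance identity kills) from the action on each $\lieg^*$-slot (which reproduces $A=[Z,X]$). The subtle point in (c) is getting the precise coefficient $+\mathrm{Id}$ right, which hinges on correctly using both the slot-ordering correction in \eqref{D_Property 2} and the Ricci identity \eqref{Ricci_identity2}, since the second fundamental derivative is not symmetric and its antisymmetric part is governed exactly by $[X,A]=2X$. I expect the manipulation to be routine once the action on the tensor product $\otimes^m\lieg^*\otimes\mathbb W$ is written out explicitly, but verifying the numerical coefficients is where a sign or factor error would most easily creep in.
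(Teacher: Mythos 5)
Your parts (a) and (b) are correct and coincide with the paper's own proof: (a) is precisely the invariance identity \eqref{Inf_aut_preserves_curvature2} at $m=0$, and (b) is obtained exactly as you describe, by expanding $0=(Z\cdot D\kappa(b_0))(X)=Z(D\kappa(b_0)(X))-D\kappa(b_0)([Z,X])$ and evaluating $D\kappa(b_0)(A)=-A\kappa(b_0)$ via \eqref{D_Property 1}.

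Part (c), however, contains a genuine error: the claim that the two insertion terms combine to $-2\,D^2\kappa(b_0)(A,X)$ ``using symmetry of the two $X$-arguments.'' The second fundamental derivative is \emph{not} symmetric, and the failure of symmetry here is exactly what produces the $\mathrm{Id}$ term you are trying to recover: by the Ricci identity \eqref{Ricci_identity2} with $X_1=X$, $X_2=A\in\liep$,
$$D^2\kappa(b_0)(X,A)-D^2\kappa(b_0)(A,X)=D\kappa(b_0)([X,A])=2\,D\kappa(b_0)(X),$$
which is nonzero in general. Consequently neither insertion term can be identified with $\pm(A+\mathrm{Id})(D\kappa(b_0)(X))$: the formula \eqref{D_Property 2} applies only when the $\liep$-element occupies the \emph{last} slot, giving $D^2\kappa(b_0)(X,A)=-A(D\kappa(b_0)(X))$, while the Ricci swap then yields $D^2\kappa(b_0)(A,X)=-A(D\kappa(b_0)(X))-2\,D\kappa(b_0)(X)=-(A+2\,\mathrm{Id})(D\kappa(b_0)(X))$; so your proposed identification of $D^2\kappa(b_0)(A,X)$ with $(A+\mathrm{Id})(D\kappa(b_0)(X))$ is false. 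If you symmetrize first and then apply the Ricci correction to the doubled term, you count the correction twice and land on $-2(A+2\,\mathrm{Id})(D\kappa(b_0)(X))$; if you instead symmetrize onto $D^2\kappa(b_0)(X,A)$ and use \eqref{D_Property 2} alone, you miss the correction entirely and get $-2A(D\kappa(b_0)(X))$. The correct bookkeeping---which is the paper's proof, and otherwise follows your strategy---keeps the two terms distinct:
$$Z(D^2\kappa(b_0)(X,X))=D^2\kappa(b_0)(A,X)+D^2\kappa(b_0)(X,A)=2\,D^2\kappa(b_0)(X,A)+D\kappa(b_0)([A,X])=-2(A+\mathrm{Id})(D\kappa(b_0)(X)),$$
with the Ricci term appearing exactly once. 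The precise coefficient is not cosmetic: Step 2 of the proof of Theorem \ref{thm.harmonic.vanishing} uses (c) in exactly the form $-2(A+\mathrm{Id})$.
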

\begin{proof}
The identity in $(a)$ is just the identity (\ref{Inf_aut_preserves_curvature2}) for $m=0$. 
By (\ref{Inf_aut_preserves_curvature2}) 
and equation (\ref{D_Property 1}),
$$0=(ZD\kappa(b_0))(X)=Z(D\kappa(b_0)(X))-D\kappa(b_0)([Z,X])=Z(D\kappa(b_0)(X))+A\kappa(b_0),$$ 
which proves $(b)$. For $(c)$ note that (\ref{Inf_aut_preserves_curvature2}) implies that
$$0=(ZD^2\kappa(b_0))(X,X)=Z(D^2\kappa(b_0)(X,X))-D^2\kappa(b_0)([Z,X],X)-D^2\kappa(b_0)(X,[Z,X]).$$
By the Ricci identity (\ref{Ricci_identity2}) and equation (\ref{D_Property 2}), we obtain
\begin{align*}
&D^2\kappa(b_0)([Z,X],X)+D^2\kappa(b_0)(X,[Z,X])=\\
&2D^2\kappa(b_0)(X,A)+D\kappa(b_0)([A,X])=-2A(D\kappa(b_0)(X))-2D\kappa(b_0)(X).
\end{align*}
and hence $(c)$ holds. 
\end{proof}

\subsection{Curvature vanishing result for irreducible parabolic geometries}
Suppose now that $(B\stackrel{\pi}{\rightarrow} M,\omega)$ is a normal irreducible parabolic 
geometry. Assume $0 \neq \eta\in\mathfrak{inf}(M)$ has a higher-order zero at $x_0\in M$, and that the isotropy with respect to $b_0 \in \pi^{-1}(x_0)$ is $Z$. 


Recall from \eqref{ker_partial*} that the harmonic curvature can be viewed as a $G_0$-equivariant function
$$\hat\kappa: B\rightarrow \widehat{\mathbb W}\subset\ker(\partial^*)\subset \mathbb W,$$
which is constant along the fibers of $B\rightarrow B/P_+$. 


In accordance with the grading of $\mathbb W$ into homogeneous components as in \eqref{grading_on_W},
we write $\kappa_i$, $i=1,2,3$, for the components of the curvature.
Since the grading \eqref{grading_on_W} on $\mathbb W$ is $G_0$-invariant,
it induces a corresponding grading on the $G_0$-submodule $\widehat{\mathbb W}$:
\begin{equation}\label{harm_curv_components}
\widehat{\mathbb W}=\widehat{\mathbb W}_1 \oplus\widehat{\mathbb W}_2\oplus\widehat{\mathbb W}_3 \ \ \mbox{where} \ \  \widehat{\mathbb W}_i=\widehat{\mathbb W}\cap\mathbb W_i, \ i=1,2,3.
\end{equation}
Decompose the harmonic curvature accordingly into homogeneous components $\hat\kappa_i$, for $i=1,2,3$.
Note that $A = [Z,X]$, for any $X \in T(Z)$, lies in $\lieg_0$, so the action of $A$ preserves each component
$\widehat{\mathbb W}_i$. Hence, the decomposition of 
$\widehat{\mathbb W}_i$ into eigenspaces of $A$ is of the form:
\begin{equation}\label{hat_W_i^j}
\widehat{\mathbb W}_i=\bigoplus_{j\in\bold Z}\widehat{\mathbb W}^{[j]}_i\qquad 
\textrm{ with }\quad \widehat{\mathbb W}^{[j]}_i=\widehat{\mathbb W}\cap\mathbb W^{[j]}_i, \ i=1,2,3 
\end{equation}

\begin{proposition}\label{harmonic_curvature_along_distinguished_curves1} 
Suppose $(B\stackrel{\pi}{\rightarrow} M,\omega)$ is a normal irreducible parabolic geometry of type $(\lieg, P)$ with $\lieg$ simple.  Let
$0 \neq \eta\in\mathfrak{inf}(M)$ have a higher-order zero at $x_0\in M$, and let the isotropy with respect to $b_0\in \pi^{-1}(x_0)$ be $Z$. Then for any $X\in T(Z)$, 
$$(\hat{\kappa} \circ \tilde{\gamma}_X)(s) \in \widehat{\mathbb W}^{[ss]}\subset \Bbb{W}^{[ss]} \qquad \textrm{ for all } s\in I,$$ 
where $I$ is the domain of $\tilde{\gamma}_X$. Therefore, for all $s\in I$,
\begin{itemize}
\item $\hat{\kappa}_1(\tilde{\gamma}_X(s)) = s w_{1}^{[-1]} + s^2 w_{1}^{[-2]}$ 
\item $\hat\kappa_{2} (\tilde{\gamma}_X(s)) = s w_2^{[-1]}$  
\item $\hat\kappa_{3} (\tilde{\gamma}_X(s)) =0$,
\end{itemize}
for some elements $w_{1}^{[-1]}\in\widehat{\mathbb W}^{[-1]}_1$, $w_1^{[-2]}\in\widehat{\mathbb W}^{[-2]}_1$ 
and $w_2^{[-1]}\in\widehat{\mathbb W}^{[-1]}_2$.
\end{proposition}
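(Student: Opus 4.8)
The plan is to prove the statement in four stages: record the polynomial shape of $\hat\kappa$ along $\tilde\gamma_X$; reduce the strongly stable assertion to the vanishing $\hat\kappa(b_0)=0$; establish that vanishing through the stronger fact $\kappa(b_0)=0$; and finally read off the three displayed formulas by a weight count. First I would apply Proposition \ref{prop.s.to.the.k} to $\hat\kappa$, which corresponds to a $\{\varphi^t_{\tilde\eta}\}$-invariant section of the completely reducible bundle $B\times_P\widehat{\mathbb W}$. Fixing $X\in T(Z)$ and $A=[Z,X]\in\lieg_0$, this gives $(\hat\kappa\circ\tilde\gamma_X)(s)=\sum_{k\geq0}s^k v^{[-k]}$ with $v^{[-k]}\in\widehat{\mathbb W}^{[-k]}$, and in particular $\hat\kappa(b_0)=v^{[0]}\in\widehat{\mathbb W}^{[0]}$. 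By \eqref{tau_along_special_curves}, the asserted containment $(\hat\kappa\circ\tilde\gamma_X)(s)\in\widehat{\mathbb W}^{[ss]}$ holds precisely once $\hat\kappa(b_0)=0$, so everything reduces to this vanishing.

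Since $\hat\kappa$ is the $G_0$-equivariant projection of $\kappa$, it suffices to prove $\kappa(b_0)=0$, and here I would combine two complementary constraints. On one hand, $p_t(s)=a_t(s)u_t(s)$ is a holonomy path for $\varphi^t_{\tilde\eta}$ at $\tilde\gamma_X(s)$ with attractor $b_0$; feeding $\kappa$ into \eqref{eqn.sl2.holo}, invariance and equivariance give $p_t(s)\cdot\kappa(\tilde\gamma_X(s))=\kappa(\tilde\gamma_X(\frac{s}{1+st}))\to\kappa(b_0)$. Because $P_+$ acts trivially on the form factor $\Lambda^2(\lieg/\liep)^*$ of $\mathbb W$ (as $[\lieg_1,\lieg_{-1}]\subset\liep$, so $u_t(s)\to e^{\frac{1}{s}Z}$ acts boundedly) while $a_t(s)=(1+st)^A$ scales $\mathbb W^{[j]}$ by $(1+st)^j$, this convergence forces $\kappa(b_0)\in\mathbb W^{[st]}$ (in fact $\mathbb W^{[0]}$). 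On the other hand, Proposition \ref{derivatives_of_curvature_at_b0}(a) gives $Z\kappa(b_0)=0$, i.e.\ $\kappa(b_0)$ takes values in $\mathfrak z(Z)$, which by \eqref{center_decomposition} lies in $\bigoplus_{i\geq0}\lieg^{[i]}$. Hence $\kappa(b_0)\in\mathbb W^{[st]}\cap\big(\Lambda^2\lieg_{-1}^*\otimes\bigoplus_{i\geq-1}\lieg^{[i]}\big)$, and Proposition \ref{eigenspaces_on_curvature}(a) yields $\kappa(b_0)\lrcorner Y=0$ for every $Y\in\lieg_{-1}^{[-2]}$.

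The \emph{main obstacle} is that for a single $X$ this is too weak: when $C(Z)=\lieg_{-1}^{[0]}\neq0$ (the non-smoothly-isolated case) the eigenspace $\lieg_{-1}^{[-2]}$ is a proper subspace of $\lieg_{-1}$, so contracting against it cannot detect all of $\kappa(b_0)$. The resolution is to let $X$ range over all of $T(Z)$: the contraction $\kappa(b_0)\lrcorner Y$ then vanishes for every $Y$ in the set $S$ of Corollary \ref{intersection}, and, crucially because $\lieg$ is simple, the span of $S$ is all of $\lieg_{-1}$. Applying Corollary \ref{intersection} to each $\lieg$-valued component of $\kappa(b_0)$ forces $\kappa(b_0)=0$, whence $\hat\kappa(b_0)=0$.

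Finally, with $\hat\kappa(b_0)=0$ established, \eqref{tau_along_special_curves} places the values of $\hat\kappa\circ\tilde\gamma_X$ in the strongly stable subspace $\widehat{\mathbb W}^{[ss]}$, and Proposition \ref{prop.s.to.the.k} writes them as $\sum_{k\geq1}s^kv^{[-k]}$. The three formulas then drop out of a weight count on $\widehat{\mathbb W}_i\subset\Lambda^2\lieg_{-1}^*\otimes\lieg_{-2+i}$: by Proposition \ref{eigenvalues_on_grading_components}(c) the weights of $A$ on $\lieg_{-1}$ lie in $\{-2,-1,0\}$, so $\Lambda^2\lieg_{-1}^*$ carries only nonnegative weights; the strictly negative weights available are therefore $\{-1,-2\}$ on $\widehat{\mathbb W}_1$, only $\{-1\}$ on $\widehat{\mathbb W}_2$, and none on $\widehat{\mathbb W}_3$. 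This gives $\hat\kappa_1=s\,w_1^{[-1]}+s^2w_1^{[-2]}$, $\hat\kappa_2=s\,w_2^{[-1]}$, and $\hat\kappa_3=0$, as claimed.
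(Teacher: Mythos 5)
Your Stages 1, 2, and 4 --- the application of Proposition \ref{prop.s.to.the.k}, the reduction of the strongly stable containment to $\hat\kappa(b_0)=0$, and the final weight count giving the three displayed formulas --- are correct and coincide with the paper's argument. The genuine gap is in Stage 3, in the claim that the holonomy convergence forces $\kappa(b_0)\in\mathbb W^{[st]}$ (let alone $\mathbb W^{[0]}$). The convergence reads $a_t(s)u_t(s)\cdot\kappa(\tilde\gamma_X(s))=\kappa\bigl(\tilde\gamma_X(\tfrac{s}{1+st})\bigr)\rightarrow\kappa(b_0)$. Put $w_t=u_t(s)\cdot\kappa(\tilde\gamma_X(s))$, so $w_t\rightarrow w_\infty=e^{\frac{1}{s}Z}\cdot\kappa(\tilde\gamma_X(s))$, and decompose into $A$-weight components. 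For $j>0$ the factor $(1+st)^j$ blows up, which forces $w_\infty^{[j]}=0$: this is a constraint on the value \emph{along the curve} (it is exactly the containment $e^{\frac{1}{s}Z}\kappa(\tilde\gamma_X(s))\in\mathbb W^{[st]}$ that the paper uses in Step 1 of the proof of Theorem \ref{thm.harmonic.vanishing}). At the attractor the weights work the other way around: for $j<0$ one has $(1+st)^jw_t^{[j]}\rightarrow0$, so the limit satisfies $\kappa(b_0)\in\bigoplus_{j\geq0}\mathbb W^{[j]}$; but for $j>0$ the products $(1+st)^jw_t^{[j]}$ are of the form ``$\infty\cdot 0$'' and may converge to anything nonzero, precisely because $P_+$ acts nontrivially on the $\lieg$-factor of $\mathbb W$, so that $w_t^{[j]}$ genuinely depends on $t$. (For a completely reducible module $u_t$ acts trivially, $w_t$ is constant in $t$, and one does get $\tau(b_0)\in\mathbb V^{[0]}$; this is why \eqref{tau_along_special_curves} is stated only for completely reducible $\mathbb V$.) So the inclusion you actually obtain at $b_0$ is the \emph{opposite} of the one you need, and Proposition \ref{eigenspaces_on_curvature}(a) does not apply: an element of $\bigoplus_{j\geq0}\mathbb W^{[j]}$ with values in $\mathfrak z(Z)$ need not be killed by contraction with $\lieg_{-1}^{[-2]}$ --- for instance a component of total weight $2$ with values in $\lieg^{[0]}\cap\mathfrak z(Z)$, contracted with $Y,V$ of weights $-2,0$, is perfectly consistent with \eqref{f1}. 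With this link broken, $\kappa(b_0)\lrcorner Y=0$ and hence $\kappa(b_0)=0$ do not follow.

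A sanity check that this shortcut cannot work: it would yield $\kappa(b_0)=0$ outright, i.e.\ the second bullet of Theorem \ref{thm.harmonic.vanishing}, which the paper obtains only \emph{after} this Proposition, via two further steps involving second fundamental derivatives (Proposition \ref{derivatives_of_curvature_at_b0}(b),(c)) and the Bianchi identity. The paper's way around the difficulty is to run the vanishing argument on $\hat\kappa$ rather than $\kappa$: since $P_+$ acts trivially on $\widehat{\mathbb W}$, \eqref{tau_along_special_curves} applies directly and gives $\hat\kappa(b_0)=\hat\kappa(\tilde\gamma_X(0))\in\widehat{\mathbb W}^{[st]}$, so the stable containment \emph{does} hold at $b_0$ for the harmonic curvature; the constraint that $\kappa(b_0)$ has values in $\mathfrak z(Z)\subset\bigoplus_{i\geq0}\lieg^{[i]}$, which you derive correctly, is then transferred to $\hat\kappa(b_0)$ using the $A$-invariance of the splitting \eqref{ker_partial*}; and Proposition \ref{eigenspaces_on_curvature}(a) together with Corollary \ref{intersection} (varying $X$ over $T(Z)$, where simplicity of $\lieg$ enters) gives $\hat\kappa(b_0)=0$. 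If you replace your Stage 3 by this argument for $\hat\kappa(b_0)=0$, and drop the unneeded claim $\kappa(b_0)=0$, your Stages 1, 2, and 4 complete the proof exactly as in the paper.
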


\begin{proof}
By Proposition \ref{derivatives_of_curvature_at_b0} $(a)$, $Z\kappa(b_0)=0$. Since $Z$ acts trivially 
on $(\lieg/\liep)^*\cong\lieg_{-1}^*$, this implies that $\kappa(b_0)$ has values in $\mathfrak z(Z)$. 
From \eqref{center_decomposition}, $\kappa(b_0)$ is an element of $\Lambda^2\lieg_{-1}^*\otimes\bigoplus_{i\geq 0}\lieg^{[i]}$.
By $A$-invariance of $\widehat{\Bbb{W}}$, also
$$\hat\kappa(b_0)\in\Lambda^2\lieg_{-1}^*\otimes\bigoplus_{i\geq 0}\lieg^{[i]}.$$ 
By \eqref{tau_along_special_curves}, $\hat{\kappa}(\tilde{\gamma}_X(s))\in\widehat{\mathbb W}^{[st]}\subset \Bbb{W}^{[st]}$ for all $s\in I$. 
Proposition \ref{eigenspaces_on_curvature} $(a)$ give that, for all elements $Y\in\lieg_{-1}^{[-2]}$,
$$\hat\kappa(b_0)\lrcorner Y=0.$$ 
Letting $X$ vary over $T(Z)$ and applying Corollary \ref{intersection} gives that $\hat\kappa(b_0)=0$. Hence, \eqref{tau_along_special_curves} implies
$$(\hat{\kappa} \circ \tilde{\gamma}_X)(s) \in \widehat{\mathbb W}^{[ss]}\subset \Bbb{W}^{[ss]}$$ for all $s\in I$ as claimed. 
By $(c)-(e)$ of Proposition \ref{eigenvalues_on_grading_components} the possible negative 
eigenvalues on $\mathbb W_1$ are $-2$ and $-1$, on $\mathbb W_2$ just $-1$, and on $\mathbb W_3$ all eigenvalues 
are nonnegative. Hence, the second statement follows immediately from \eqref{hat_W_i^j} and Proposition 
\ref{prop.s.to.the.k}.
\end{proof}

It is a consequence of the Bianchi identities of the Cartan curvature $\kappa$ that the lowest homogeneous component of $\kappa$ 
always coincides with the lowest homogeneous component of the harmonic curvature (see \cite[Theorem 3.1.12]{cap.slovak.book.vol1}). 
This lowest homogeneity component of the curvature 
is $\kappa_1$, which coincides with the torsion of the geometry. Thus 
$\kappa_1=\hat\kappa_1$.
Now Proposition \ref{harmonic_curvature_along_distinguished_curves1} implies that
\begin{equation*}
\kappa_{1} (\tilde{\gamma}_X(s)) =\hat\kappa_{1} (\tilde{\gamma}_X(s))=
s w_{1}^{[-1]} + s^2 w_{1}^{[-2]}. 
\end{equation*}
Let us remark that $\kappa_1$ might be identically zero, implying that $\hat\kappa_1$ is identically zero. In this case, 
the Bianchi identity again gives $\kappa_2=\hat\kappa_2$ \cite[Theorem 3.1.12]{cap.slovak.book.vol1}. If  $\kappa_2$
is also identically zero, then similarly $\kappa_3=\hat\kappa_3$.

We now complete the proof of Theorem \ref{thm.harmonic.vanishing}.

\begin{proof}[Proof of Theorem \ref{thm.harmonic.vanishing}]
Fix a point $b_0\in\pi^{-1}(x_0)$ to identify the isotropy of $\eta\in\mathfrak{inf}(M)$ with $Z\in\liep_+$. Fix $X\in T(Z)$, and write $I$ for the domain of $\tilde\gamma_X$. From Proposition \ref{harmonic_curvature_along_distinguished_curves1}, $\kappa_1(b_0)=\hat\kappa_1(b_0)=0$, $\hat\kappa_2(b_0)=0$, and $\hat\kappa_3\circ\tilde\gamma_X\equiv 0$. The remaining claims of the theorem follow in 3 steps.
\\\textbf{1. Step}: \emph{$\kappa_{2} (b_0)=0$ and $w_{1}^{[-1]} = 0$ in the expression of 
$\kappa_1(\tilde{\gamma}_X(s))=\hat\kappa_1(\tilde{\gamma}_X(s))$ in Proposition 
\ref{harmonic_curvature_along_distinguished_curves1}.}
\\
From equation (\ref{eqn.sl2.holo}), $a_s(t)$ is a holonomy path for the flow $\{ \varphi_{\tilde\eta}^t \}$ at $\tilde{\gamma}_X(s) e^{- \frac{1}{s} Z}$, for $s > 0$ (and for $\{ \varphi^{-t}_{\tilde{\eta}} \}$ if $s < 0$). Therefore, 
$\kappa(\tilde{\gamma}_X(s) e^{-\frac{1}{s} Z})=e^{\frac{1}{s}Z}\kappa(\tilde{\gamma}_X(s))$ is in $\Bbb{W}^{[st]}$ for all $s \neq 0$. The action of 
$Z$ on $\mathbb W$ raises homogeneities by $1$, so, as an operator on $\mathbb W$, we have $e^{\frac{1}{s}Z} = 1 + \frac{1}{s} Z + \frac{1}{2 s^2} Z^2$. 
From the polynomial expression of $\kappa_1(\tilde{\gamma}_X(s))=\hat\kappa_1(\tilde{\gamma}_X(s))$ in Proposition 
\ref{harmonic_curvature_along_distinguished_curves1}, the component of homogeneity $2$ of $\kappa$ at $\tilde{\gamma}_X(s) e^{- \frac{1}{s} Z}$ is 
\begin{equation}\label{eq2}
 \kappa_{2} (\tilde{\gamma}_X(s)) + \frac{1}{s} Z\kappa_{1}(\tilde{\gamma}_X(s)) =  \kappa_{2} (\tilde{\gamma}_X(s)) + Z w_{1}^{[-1]} + s Z w_{1}^{[-2]},
 \end{equation}
 where $Z w_{1}^{[-1]}\in \Bbb{W}^{[1]}_2$ and $Z w_{1}^{[-2]}\in\Bbb{W}^{[0]}_2$.
Since the expression (\ref{eq2}) lies in $\Bbb{W}^{[st]}_2$ for all $s \neq 0$ and the term $Z w_{1}^{[-1]}$ on the right hand side
is unstable, $Z w_{1}^{[-1]}$ must cancel with a term of $\kappa_{2} (\tilde{\gamma}_X(s))$.  Therefore, by continuity in $s$, 
\begin{equation}\label{eq3}
\kappa_{2} (\tilde{\gamma}_X(s))  \in - Z w_{1}^{[-1]} + \Bbb{W}^{[st]}_{2} \qquad \forall s \in I
\end{equation}
We then deduce from Proposition \ref{eigenvalues_on_grading_components} $(e)$ and Proposition \ref{eigenspaces_on_curvature} $(a)$ and $(b)$
that the expression (\ref{eq3}) lies in $\ker(\llcorner Y)$ for all $Y\in\lieg_{-1}^{[-2]}$. Varying $X\in T(Z)$ and applying Corollary \ref{intersection} gives $\kappa_{2} (b_0)=0$ as desired.
Moreover, it follows that
$$Z w_{1}^{[-1]}= 0\quad \textrm{ and }\quad \kappa_{2}(\tilde{\gamma}_X(s)) \in \Bbb{W}_{2}^{[st]}\quad \textrm{ for all } s\in I.$$  
From Proposition \ref{eigenvalues_on_grading_components}, all eigenvalues of $A$ on $\lieg_{-1}^*$ are nonnegative, 
so $w_{1}^{[-1]}\in\widehat{\Bbb{W}}_{1}^{[-1]}\subset\Lambda^2\lieg_{-1}^*\otimes\lieg_{-1}$ implies that $w_{1}^{[-1]}$ takes values in $\lieg_{-1}^{[ss]}$.  
On the other hand, $Z$ acts trivially on $\lieg_{-1}^*$, so $Z w_{1}^{[-1]} = 0$ implies that $w_{1}^{[-1]}$ takes values in $C(Z)$, which by $(c)$ 
of Proposition \ref{eigenvalues_on_grading_components} equals $\lieg_{-1}^{[0]}$. 
Therefore $w_{1}^{[-1]}= 0$, and  
\begin{equation}\label{eq5}
\kappa_{1} (\tilde{\gamma}_X(s)) = s^2 w_{1}^{[-2]}\quad \textrm{ for all } s\in I.
\end{equation}

\textbf{2. Step}: \emph{$\kappa_{3} (b_0)=0$, and $w_{1}^{[-2]} = 0$ in the expression of 
$\kappa_1(\tilde{\gamma}_X(s))=\hat\kappa_1(\tilde{\gamma}_X(s))$ in Proposition 
\ref{harmonic_curvature_along_distinguished_curves1}.  Thus $\kappa_{1} \circ\tilde{\gamma}_X\equiv0$.}
\\From \eqref{eq5}, $D^2\kappa_1(b_0)(X,X)=2 w_{1}^{[-2]}$.
By Proposition \ref{derivatives_of_curvature_at_b0} $(c)$, 
$$-2(A+Id)(D\kappa_2(b_0)(X))=Z(D^2\kappa_1(b_0)(X,X))=2Zw_{1}^{[-2]}\in\Bbb W^{[0]}_2,$$ which implies that
\begin{equation}\label{eq6}
D\kappa_2(b_0)(X)\in-Zw_{1}^{[-2]}+\Bbb W^{[-1]}_2.
\end{equation}
By Proposition \ref{derivatives_of_curvature_at_b0} $(b)$, 
\begin{equation}\label{eq4}
Z(D\kappa_{2}(b_0) (X)) = - A \kappa_{3}(b_0). 
\end{equation}
Since $Z^2w_{1}^{[-2]}$ belongs to $\Bbb{W}^{[2]}_3$
and $Z\Bbb W^{[-1]}_2\subset \Bbb W^{[1]}_3$, we conclude from (\ref{eq6}) and (\ref{eq4})
that 
$$\kappa_3(b_0)\in- \frac{1}{2} Z^2w_{1}^{[-2]} +  \Bbb W^{[1]}_3 +\Bbb{W}^{[0]}_{3}.$$
By Proposition \ref{eigenvalues_on_grading_components} $(d)$, all eigenvalues of $A$ on $\lieg_1$ are nonnegative.  Then $(a)-(c)$ of Proposition \ref{eigenspaces_on_curvature} give
$$\kappa_{3}(b_0)\in \ker(\llcorner Y)\qquad \textrm{ for all } Y\in\lieg_{-1}^{[-2]}.$$
Varying $X\in T(Z)$ and applying Corollary \ref{intersection} gives $\kappa_3(b_0)=0$, as claimed. In particular,  
$Z^2w_{1}^{[-2]}= 0$.  Since all eigenvalues of $A$ on $\lieg_{-1}^*$ are nonnegative, 
the element $w_{1}^{[-2]}$ must have values in $\lieg_{-1}^{[-2]}$.
Because $Z$ acts trivially on $\lieg_{-1}^*$, the identity $Z^2w_{1}^{[-2]}= 0$ implies that the image of 
$w_{1}^{[-2]}$ in must be annihilated by $Z^2$. Hence, we conclude from
$(b)$ of Proposition \ref{eigenvalues_on_grading_components} that $w_{1}^{[-2]}=0$.

\textbf{3. Step} \emph{$\hat\kappa_2\circ\tilde\gamma_X\equiv 0$.}
\\Since $w_{1}^{[-2]}=0$, we see from (\ref{eq6}) that $D\kappa_2(b_0)(X)=v_{2}^{[-1]}$ equals some $v_{2}^{[-1]}\in\Bbb W^{[-1]}_2$. 
Recall that by Proposition \ref{harmonic_curvature_along_distinguished_curves1}, $\hat\kappa_2(\tilde\gamma_X(s))=sw_{2}^{[-1]}\in \widehat{\mathbb W}^{[-1]}_2\subset \Bbb W^{[-1]}_2$. Since the decomposition \eqref{ker_partial*} of $\ker(\partial^*)$ is invariant under the action of $A\in\lieg_0$, we have $v_{2}^{[-1]}=w_{2}^{[-1]}+\bar{w}_{2}^{[-1]}$, 
where $\bar{w}_{2}^{[-1]}\in\textrm{im}(\partial^*)$. 
By $(c)$ and $(e)$ of Proposition \ref{eigenvalues_on_grading_components}, the elements $w_{2}^{[-1]}$ and $\bar{w}_{2}^{[-1]}$ must have values 
in $\lieg_0^{[-1]}$. On the other hand, since $\kappa_3(b_0)=0$, the equation (\ref{eq4}) implies that $Zv_{2}^{[-1]}=0$ and hence, as above, that $Z$ annihilates the image of $v_2^{[-1]}$. Then by 
Proposition \ref{eigenvalues_on_grading_components} $(a)$, $v_{2}^{[-1]}=0$.  
Since $w_{2}^{[-1]}$ and $\bar{w}_{2}^{[-1]}$ are linearly independent, we must have in particular that $w_{2}^{[-1]}=0$, which completes the proof.
\end{proof}

\section{Rigidity results for irreducible parabolic geometries}
\label{sec.applications}
In this section we apply Theorem \ref{thm.harmonic.vanishing} to prove new rigidity results for various irreducible parabolic geometries admitting strongly essential flows.  We prove general curvature vanishing results for flows with smoothly isolated zeroes in Proposition \ref{Case_isolated_fixed_point}, and for flows with maximal strongly fixed sets in Proposition \ref{C(Z)_max_prop}.  These are applied to various specific geometries in Corollaries \ref{Cor_isolated_zero} and \ref{Cor_C(Z)_max}, respectively.  We also prove Theorem \ref{thm.almost.cproj} and the quaternionic counterpart, Theorem \ref{thm.almost.quat}.

\subsection{Classification of irreducible parabolic geometries}\label{classification}

The complete classification of parabolic subalgebras with abelian nilradical in simple Lie algebras can be found in \cite{cap.slovak.book.vol1}. In Sections \ref{typeA}--\ref{typeD} below, we describe the irreducible parabolic geometries modeled on $(\lieg, P)$ with $\lieg$ simple, neglecting only a few obscure examples.  We shall write $P$ for any parabolic subgroup with Lie algebra $\liep$, but we will classify geometric types in $\liep_+$ assuming that $P$ is maximal with Lie algebra $\liep$.  For isogenous $P' <P$, geometries modeled on $(\lieg,P')$ have some additional geometric data on the underlying geometric structure, such as an orientation.  Since our results concern the local geometry around a higher-order zero, finer choices of $P$ will not make a difference.
 
\subsubsection{Type $A_n$}\label{typeA}
Assume $n\geq2$. In $\lieg=\algsl(n+1,\bold{K})$ for $\bold{K}=\BR, \BC$ or $\bold{H}$ there exist up to conjugation $n$ parabolic subalgebras with abelian nilradical, which can be realized as the stabilizers $\liep(p)$ in $\lieg$ of $\bold{K}^p\subset \bold{K}^{n+1}$ for $1\leq p\leq n$. 
Fix $p\geq 1$ and set  $\liep=\liep(p)$ and $q=n+1-p$. Identifying the Levi factor $\lieg_0 \subset \liep$ with the subalgebra preserving the decomposition  $\bold{K}^p\oplus\bold{K}^q=\bold{K}^{n+1}$, 
$\lieg$ can be identified with a $|1|$-graded Lie algebra of the following form
\begin{equation}\label{Grassmannian_grading}
\lieg=\lieg_{-1}\oplus\lieg_0\oplus\lieg_1=\textrm{L}_{\bold{K}}(\bold{K}^p,\bold{K}^q)\oplus\mathfrak s(\alggl(p,\bold{K})\times\alggl(q,\bold{K}))
\oplus \textrm{L}_{\bold{K}}(\bold{K}^q,\bold{K}^p),
\end{equation}
where $\liep=\lieg_0\oplus\lieg_1$. 
The action of $(A,B)\in\lieg_0$ on $X\in\lieg_{-1}$ is given by $(A,B)X=BX-XA$; on $Z\in\lieg_1$, it is $(A,B)Z=AZ-ZB$.  The bracket $\lieg_1\times\lieg_{-1}\rightarrow\lieg_0$ is $[Z,X]=(ZX,-XZ)$.

We will assume below that $p\leq q$, since $q\leq p$ leads to isomorphic geometries. 
The following table lists the parabolic geometries corresponding to the pair $(\lieg,P(p))$ and the possible types of isotropies of non-trivial infinitesimal automorphisms at higher order zeroes.

\begin{table}[h]
\begin{tabular}{|p{18mm}|c|c|}
\hline
 & Geometric structure  & Number of types of isotropies\\
 &                                         & of strongly essential flows \\
\hline\hline
$p=1$ & projective, almost c-projective, & $1$\\
          & and almost quaternionic structures                   &         \\
\hline
$2\leq p\leq q$ & almost Grassmannian structures of type $(p,q)$ &  $p$, corresponding to $\bold{K}$-ranks \\
                            & and their complex and quaternionic analogues & of non-zero  maps in \\
                            &                                                                                        &$\lieg_1=\textrm{L}_{\bold{K}}(\bold{K}^q,\bold{K}^p)$ \\
\hline
\end{tabular}
\end{table}

Structures in the first row are infinitesimally modeled on ${\bf KP}^n$, and those in the second row on Grassmann varieties $\mbox{Gr}_{\bf K}(p,n+1)$.  For detailed descriptions of these geometries, with the exception of almost c-projective structures, we refer to Section 4.1 of \cite{cap.slovak.book.vol1}.  An \emph{almost c-projective structure} consists of an almost complex structure and a complex projective class of \emph{minimal} complex connections---for which the torsion is a multiple of the Nijenhuis tensor. If the complex structure is integrable, these are also called \emph{h-projective structures}; see \cite{matveev.rosemann}, \cite{Hrdina}, and \cite{cald.east.mat.neu}.

\subsubsection{Type $B_n$} \label{typeB}
Let $m\geq 1$ be an integer. Up to conjugation, there is only one parabolic subalgebra with abelian nilradical in $\mathfrak o(2m+3,\BC)$.  It is the stabilizer of the highest weight line in the standard representation $\BC^{2m+3}$. This parabolic subalgebra admits real forms in all the
real forms $\mathfrak o(p+1,q+1)$ of $\mathfrak o(2m+3,\BC)$, where $p+q=2m+1$, which we shall all denote by $\liep$. Choosing a realization of the Levi factor of such $\liep$ as a subalgebra of $\liep$ gives the following $|1|$-grading on $\lieg$:
\begin{equation}\label{conformal_grading}
\lieg=\lieg_{-1}\oplus\lieg_0\oplus\lieg_1=\BR^{p,q}\oplus \mathfrak c\mathfrak o(p,q)\oplus(\BR^{p,q})^*,
\end{equation}
where $\liep=\lieg_0\oplus\lieg_1$ and $\mathfrak c\mathfrak o(p,q)=\BR\oplus \mathfrak o(p,q)$ denotes the conformal Lie algebra of signature $(p,q)$.
An element $(a,A)\in\lieg_0$ acts on $X\in\lieg_{-1}$ by $(a,A)X=-aX+AX$.  The Lie bracket between $Z\in\lieg_1$ and $X\in\lieg_{-1}$ is  
 \begin{equation}\label{conf_bracket_1}
 [Z,X]=(ZX, -XZ+ \mathbb I(XZ)^t\mathbb I),\quad \textrm{where}\,\, \mathbb I=\textrm{Id}_p\oplus -\textrm{Id}_q.
 \end{equation}
 
 Parabolic geometries of type $(\mathfrak o(p+1,q+1), P)$ correspond to \emph{semi-Riemannian conformal structures of signature $(p,q)$ on manifolds of dimension $2m+1$}. The standard homogeneous model 
 is the space of null lines in $\BR^{2m+3}$ equipped with its standard conformal structure, which is called the M\"obius space of signature $(p,q)$.
 
 For $pq\neq 0$ there are three possible types of isotropies of higher-order zeroes of conformal Killing fields.  These correspond to timelike, spacelike, and null elements in $\lieg_1=(\BR^{p,q})^*$.  Lightlike isotropies are called \emph{isotropic}, while spacelike and timelike isotropies are referred to as \emph{non-isotropic}. If $pq=0$, then there is only one type of isotropy.
 
\subsubsection{Type $C_n$}\label{typeC}
Let $n\geq 2$ be an integer and assume $\bold{K}=\BR$ or $\BC$. Let $\bold{K}^{2n}$ be equipped with the standard ${\bf K}$-linear symplectic form. The symplectic Lie algebra $\lieg=\mathfrak s\liep(2n,\bold{K})$ has up to conjugation a unique parabolic 
subalgebra $\liep$ with abelian nilradical. It is the stabilizer in $\lieg$ of the 
isotropic subspace $\mbox{span}\{ {\bf e}_1, \ldots, {\bf e}_n \} \subset \bold{K}^{2n}$. Identifying the Levi factor $\lieg_0 \subset \liep$ with the stabilizer of the complementary isotropic subspace $\mbox{span} \{ {\bf e}_{n+1} , \ldots, {\bf e}_{2n} \} \subset \bold{K}^{2n}$ gives an identification of $\lieg$ with the following $|1|$-graded Lie algebra:
\begin{equation}\label{grading_Lagrange_str}
\lieg=\lieg_{-1}\oplus\lieg_0\oplus\lieg_1=S^2\bold{K}^{n}\oplus\alggl(n,\bold{K})\oplus S^2\bold{K}^{n*},
\end{equation}
where $\liep=\lieg_0\oplus\lieg_1$. The action of an element $A\in\lieg_0$ on $X\in\lieg_{-1}$ corresponds to the action
of $\alggl(n,\bold{K})$ on $S^2\bold{K}^{n}$. The Lie bracket between $Z\in\lieg_1$ and $X\in\lieg_{-1}$ is $[Z,X]=-ZX$, where the right-hand side is the product of symmetric matrices.

Parabolic geometries of type $(\mathfrak s\liep(2n,\bold{K}), P)$ correspond to \emph{almost Lagrangean structures} (and to their complex analogues when ${\bf K} = {\bf C}$).  The standard homogeneous model is the variety of Lagrangean subspaces of $\bold{K}^{2n}$.
For ${\bf K} = {\bf R}$, such a structure on $M^{n(n+1)/2}$ is given by an isomorphism $TM \cong S^2 E$, for an auxiliary rank-$n$ vector bundle $E$. For more details about almost Langrangean structures see Section 4.1.11 of 
\cite{cap.slovak.book.vol1}.

For $\bold{K}=\bold{R}$, the possible types of isotropies of higher-order fixed points of non-trivial strongly essential flows  
are parametrized by the possible signatures $(p,q,r)$ of non-zero quadratic forms on $\BR^n$ of rank $p+q \neq 0$. If $\bold{K}=\bold{C}$, 
the possible types of isotropies correspond to the possible ranks of nonzero complex-bilinear quadratic forms.

\subsubsection{Type $D_n$} \label{typeD}
Let $m\geq 2$ be an integer. Up to conjugation, there are three 
parabolic subalgebras with abelian nilradical in $\mathfrak o(2(m+1),\BC)$. The first can be realized as the stabilizer $\liep$ of the highest weight line in the standard representation of $\mathfrak o(2(m+1),\BC)$; the others are the stabilizers $\lieq_1$ and $\lieq_2$ of the highest weight lines in the half-spin representations. Since $\lieq_1$ and $\lieq_2$ are apparently isomorphic Lie algebras, we restrict to $\lieq=\lieq_1$.

The subalgebra $\liep$ admits real forms in $\mathfrak o(p+1,q+1)$ for each $p+q=2m$, all of which we shall denote again by $\liep$. Parabolic geometries of type $(\mathfrak o(p+1,q+1), P)$ correspond to \emph{semi-Riemannian conformal structures of type $(p,q)$ on manifolds of dimension $2m$}. The description of these structures and the possible types of strongly essential isotropies is completely analogous to the discussion in Section \ref{typeB}.

Under the isomorphism $D_3\cong A_3$, the algebras $\lieq_1$, $\liep$ and $\lieq_2$ correspond to $\liep(1)$, $\liep(2)$, and $\liep(3)$, respectively.  For the split real forms $\mathfrak{o}(3,3) \cong \mathfrak{sl}(4,\BR)$, this isomorphism in particular reflects the well-known relation between conformal geometry of signature $(2,2)$ and real $(2,2)$ almost Grassmannian structures. Because of the triality of the Dynkin diagram $D_4$, the class of geometries corresponding to $\liep$ and $\lieq$ in this rank are isomorphic; both give rise to conformal geometry in dimension $6$.    

Now assume $m = n-1 \geq 4$. Equip $\BR^{2n}$ with the standard inner product of signature $(n,n)$. 
The algebra $\lieq$ has a real form in $\mathfrak o(n,n)$, 
which we also denote by $\lieq$. To avoid the more complicated spinor description, realize $\lieq$ as the stabilizer in $\lieg$ of 
the highest weight line in the representation of self-dual $n$-forms on $(\BR^{n,n})^*$---that is, of the self-dual isotropic subspace 
$\mbox{span}\{ {\bf e}_1, \ldots, {\bf e}_n \} \subset \BR^{n,n}$.  Identifying the Levi factor $\lieg_0$ of $\lieq$ with the stabilizer of the complementary isotropic subspace $\mbox{span}\{ {\bf e}_{n+1}, \ldots, {\bf e}_{2n} \} \subset \bold{R}^{n,n}$ gives an identification of $\lieg$ with the $|1|$-graded Lie algebra
\begin{equation}\label{grading_spinorial_str}
\lieg=\lieg_{-1}\oplus\lieg_0\oplus\lieg_1=\Lambda^2\bold{R}^{n}\oplus\alggl(n,\bold{R})\oplus \Lambda^2\bold{R}^{n*},
\end{equation}
where $\lieq=\lieg_0\oplus\lieg_1$. The Lie bracket between elements in $\lieg_0$ and $\lieg_{-1}$ corresponds
to the action of $\alggl(n,\bold{R})$ on $\Lambda^2\bold{R}^{n}$. The bracket between elements $Z\in\lieg_1$ and $X\in\lieg_{-1}$ 
is $[Z,X]=-ZX$, where the right-hand side is the product of skew-symmetric matrices.

Parabolic geometries on $M^{n(n-1)/2}$ of type $(\mathfrak o(n,n), Q)$ are \emph{almost spinorial structures}. The standard homogeneous model is the variety of self-dual isotropic subspaces of $\BR^{n,n}$.  Such a geometry is given by an isomorphism $TM \cong \Lambda^2 E$, for $E$ an auxiliary rank-$n$ vector bundle. 
For more details about almost spinorial structures, see Section 4.1.12 of \cite{cap.slovak.book.vol1}.

The possible types of isotropies of higher-order fixed points of non-trivial strongly essential flows correspond to the
possible ranks $p=2\ell$ of nonzero skew-symmetric $n\times n$ matrices, $0 < \ell\leq \frac{n}{2}$; hence, there are precisely $\lfloor \frac{n}{2} \rfloor$ geometric types of isotropies of strongly essential flows.

\subsection{Smoothly isolated higher order zeroes}
\label{sec.isol_zeros}
Suppose $(B\stackrel{\pi}{\rightarrow} M,\omega)$ is a normal irreducible
parabolic geometry of type $(\lieg, P)$ admitting a nontrivial infinitesimal automorphism $\eta$
with a smoothly isolated higher-order zero at $x_0\in M$. Then its isotropy $\alpha\in T^*_{x_0}M$ 
satisfies $C(\alpha)=\{0\}$. Choose a point $b_0\in\pi^{-1}(x_0)$ to identify $\alpha$ 
with an element $Z\in\lieg_1$ and fix $X\in T(Z)$. By $(c)$ of Proposition \ref{eigenvalues_on_grading_components} 
the eigenspace decomposition of $\lieg_{-1}$ with respect to the action of 
$A=[Z,X]$ is of the form $\lieg_{-1}=\lieg_{-1}^{-2}\oplus\lieg_{-1}^{-1}$. Hence, $(d)$ and $(e)$ 
of Proposition \ref{eigenvalues_on_grading_components} imply that
the $\Bbb W^{[ss]}(A)=\{0\}$. Since $A \in \lieg_0$, we conclude that also $\widehat{\Bbb W}^{[ss]}(A)=\{0\}$.
If $\lieg$ is simple, Theorem \ref{thm.harmonic.vanishing} implies
that $\hat{\kappa}(x_0) = 0$ (in fact $\hat{\kappa}$ vanishes along the curves of $\mathcal{T}(\alpha)$).  Therefore we deduce from 
Corollary 2.14 of \cite{cap.me.parabolictrans} (compare \cite[Prop 2.2]{cap.me.parabolictrans}):

\begin{proposition}\label{Case_isolated_fixed_point} 
Suppose $(B\stackrel{\pi}{\rightarrow} M,\omega)$ is a normal irreducible 
parabolic geometry of type $(\lieg, P)$ with $\lieg$ simple.  Let $0 \neq \eta \in \mathfrak{inf}(M)$ 
with smoothly isolated higher-order zero at $x_0\in M$. Then there exists an open set $U\subset M$ such 
that $x_0\in \overline{U}$ on which the geometry is locally flat.  
\end{proposition}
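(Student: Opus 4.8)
The plan is to read off, from the smoothly isolated hypothesis, the purely algebraic fact that the curvature module carries no strongly stable part for the relevant $A$, then to apply Theorem \ref{thm.harmonic.vanishing} to normalize the harmonic curvature at $x_0$, and finally to promote this to vanishing of $\hat\kappa$ on an open attracting set of the flow by appealing to the dynamical analysis of \cite{cap.me.parabolictrans}. Since the full curvature is recovered from the harmonic curvature through the operator $S$, vanishing of $\hat\kappa$ on an open set is exactly local flatness there.

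First I would fix $b_0 \in \pi^{-1}(x_0)$, identify the isotropy $\alpha$ with $Z \in \lieg_1$, and choose $X \in T(Z)$ with $A = [Z,X] \in \lieg_0$. By \cite[Prop 2.5]{cap.me.parabolictrans} the smoothly isolated hypothesis is equivalent to $C(\alpha) = \{0\}$, hence $C(Z) = \{0\}$. By part $(c)$ of Proposition \ref{eigenvalues_on_grading_components} one has $C(Z) = \lieg_{-1}^{[0]}$, so the $A$-eigenspace decomposition of $\lieg_{-1}$ collapses to $\lieg_{-1} = \lieg_{-1}^{[-2]} \oplus \lieg_{-1}^{[-1]}$. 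Dualizing, $\lieg_{-1}^*$ carries only the eigenvalues $1$ and $2$, so $\Lambda^2 \lieg_{-1}^*$ has eigenvalues at least $2$. Combined with parts $(c)$--$(e)$ and Proposition \ref{eigenvalues}, which bound the $A$-eigenvalues on $\lieg$ below by $-2$, the grading \eqref{grading_on_W} of $\mathbb W = \Lambda^2 \lieg_{-1}^* \otimes \lieg$ shows that every $A$-eigenvalue on $\mathbb W$ is nonnegative; that is, $\mathbb W^{[ss]}(A) = \{0\}$. Since $A \in \lieg_0$ preserves the $G_0$-submodule $\widehat{\mathbb W} \subset \mathbb W$, we also get $\widehat{\mathbb W}^{[ss]}(A) = \{0\}$.

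Because $\lieg$ is simple, I would then apply Theorem \ref{thm.harmonic.vanishing} to obtain $\kappa(x_0) = 0$, and in particular $\hat\kappa(x_0) = 0$, together with vanishing of $\hat\kappa$ along the curves of $\mathcal{T}(\alpha)$. The remaining step is dynamical. By \eqref{action_flow_in_B} the flow $\{\varphi^t_\eta\}$ acts near $x_0$ as a reparametrization of distinguished curves composed with a holonomy path in $P$; since $C(\alpha) = \{0\}$ there are no nearby higher-order zeroes of the same geometric type, so the set of points attracted to $x_0$ under this flow is a nonempty open set $U$ with $x_0 \in \overline{U}$. For such points the holonomy path confines the invariant function $\hat\kappa$ to a stable subspace as in \eqref{tau_along_special_curves}, and the normalization $\hat\kappa(x_0) = 0$ forces its values into the strongly stable subspace $\widehat{\mathbb W}^{[ss]}(A)$, which we have just shown is trivial; hence $\hat\kappa \equiv 0$ on $U$. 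This is the content of Corollary 2.14 of \cite{cap.me.parabolictrans}, which I would cite rather than reproduce. Finally, since $\kappa = S(\hat\kappa)$ for normal irreducible parabolic geometries, $\hat\kappa \equiv 0$ on $U$ yields $\kappa \equiv 0$ on $U$, i.e.\ local flatness on $U$.

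I expect the genuine obstacle to be the dynamical step rather than the algebra: one must produce, for \emph{every} point of the attracting set $U$ and not merely for points on the algebraically distinguished curves $\mathcal{T}(\alpha)$, a holonomy path whose eigenvalue behavior confines $\hat\kappa$ to $\widehat{\mathbb W}^{[ss]}(A) = \{0\}$, and one must verify that this attracting set is genuinely open. This is precisely where the isolatedness hypothesis $C(\alpha) = \{0\}$ enters in an essential, dynamical way, and it is the portion of the argument I would delegate to Corollary 2.14 of \cite{cap.me.parabolictrans} rather than establish from scratch.
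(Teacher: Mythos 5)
Your proposal is correct and follows essentially the same route as the paper: it extracts $C(\alpha)=\{0\}$ from smooth isolatedness, uses the eigenvalue analysis of Proposition \ref{eigenvalues_on_grading_components} to conclude $\widehat{\mathbb W}^{[ss]}(A)=\{0\}$, invokes Theorem \ref{thm.harmonic.vanishing} (with $\lieg$ simple) to get $\hat\kappa(x_0)=0$, and delegates the dynamical open-set step to Corollary 2.14 of \cite{cap.me.parabolictrans}, exactly as the paper does. Your explicit dualization argument for $\mathbb W^{[ss]}(A)=\{0\}$ is just a spelled-out version of the paper's appeal to parts $(c)$--$(e)$ of Proposition \ref{eigenvalues_on_grading_components}, so there is no substantive difference.
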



Looking at the Lie brackets $\lieg_1\times\lieg_{-1}\rightarrow\lieg_0$ of the various geometries in Section
\ref{classification}, one can easily see which types of strongly essential zeroes are smoothly isolated. Then Proposition \ref{Case_isolated_fixed_point} yields the following Corollary, which in particular recovers some of the results on strongly essential flows on conformal structures in \cite{fm.champsconfs} 
(see also \cite{cap.me.proj.conf}) and Theorem 3.7 on almost quaternionic structures in \cite{cap.me.parabolictrans}.   

\begin{corollary} \label{Cor_isolated_zero}
Suppose $(B \stackrel{\pi}{\rightarrow} M,\omega)$ is a normal, irreducible, parabolic geometry. Let $0 \neq \eta\in\mathfrak{inf}(M)$ with a higher-order zero at $x_0\in M$.  Let $\alpha$ be the isotropy of $\eta$ at $x_0$. Then the higher-order zero $x_0$ is smoothly isolated, and the geometry locally flat on an open set $U\subset M$ with $x_0\in \overline{U}$, for the following model pairs $(\lieg,P)$ and geometric types of $\alpha$:
\begin{enumerate}
\item $A_n:$ $\lieg=\algsl(n+1,\bold{K})$ for $\bold{K}=\BR, \BC$ or $\bold{H}$, $n\geq 2$, and $\liep=\liep(1)$ as in Section \ref{typeA}
\item $A_n:$ $\lieg=\algsl(p+q,\bold{K})$ for $\bold{K}=\BR, \BC$ or $\bold{H}$, $2\leq p\leq q$, $\liep=\liep(p)$ as in Section \ref{typeA}, 
and $\mbox{rk}_{\bf K}(\alpha) = p$
\item $B_n/D_n:$ $\lieg=\mathfrak o(p+1,q+1)$ or $\mathfrak o(m+2,\BC)$, $m=p+q\geq 3$ and $pq=0$, and
$\liep$ as in Sections \ref{typeB} and \ref{typeD} 
\item $B_n/D_n:$ $\lieg=\mathfrak o(p+1,q+1)$ or $\mathfrak o(m+2,\BC)$, $m=p+q\geq 3$ and $pq\neq 0$, $\liep$ as 
in Sections \ref{typeB} and \ref{typeD}, and $\alpha$ non-isotropic
\item $C_n:$ $\lieg=\mathfrak s\mathfrak p(2n,\bold{K})$, $n\geq 3$, $\liep$ as in Section \ref{typeC}, and $\mbox{rk}_{\bf K}(\alpha) = n$.
\item $D_n:$ $\lieg=\mathfrak o(n,n)$ or $\mathfrak o(2n,\BC)$, $n\geq 5$, $\liep=\lieq$ as in Section \ref{typeD}, and 
$\mbox{rk}_{\bf K}(\alpha) = n$ for $n$ even, $\mbox{rk}_{\bf K}(\alpha) = n-1$ for $n$ odd.
\end{enumerate}
\end{corollary}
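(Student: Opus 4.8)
The statement is a case-by-case linear-algebra verification feeding into Proposition \ref{Case_isolated_fixed_point}, and the plan is to reduce it entirely to computing commutants. By the criterion recalled just before that proposition (\cite[Prop 2.5]{cap.me.parabolictrans}), a higher-order zero with isotropy $\alpha$ is smoothly isolated if and only if $C(\alpha)=\{0\}$; fixing $b_0\in\pi^{-1}(x_0)$ to represent $\alpha$ by $Z\in\lieg_1$, this is equivalent to triviality of the commutant $C(Z)=\{X\in\lieg_{-1}\,:\,[Z,X]=0\}$, and we have already noted that $C(\alpha)$ is independent of $b_0$. Since $\lieg$ is simple in every listed case, once $C(Z)=\{0\}$ is established, Proposition \ref{Case_isolated_fixed_point} immediately delivers the open set $U$ with $x_0\in\overline U$ on which the geometry is flat. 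Thus I would reduce the whole corollary to checking, for each model pair and each geometric type, that the explicit bracket $\lieg_1\times\lieg_{-1}\to\lieg_0$ from Section \ref{classification} forces $X=0$; the geometric types singled out are in each instance the non-degenerate (maximal-rank, or metrically non-null) ones, for which this is exactly what one expects.

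For the type $A_n$ cases (1) and (2), with $\lieg=\algsl(p+q,\bold K)$, $\lieg_{-1}=\textrm{L}_{\bold K}(\bold K^p,\bold K^q)$, $\lieg_1=\textrm{L}_{\bold K}(\bold K^q,\bold K^p)$ and $[Z,X]=(ZX,-XZ)$, the isotropy of type $\mathrm{rk}_{\bold K}(\alpha)=p$ is represented by a $Z$ of full rank $p$, i.e.\ a surjection $\bold K^q\to\bold K^p$ (when $p=1$ this is the only nonzero type). The $\lieg_0$-equation $XZ=0$ then forces $X$ to annihilate $\mathrm{im}(Z)=\bold K^p$, so $X=0$ and $C(Z)=\{0\}$. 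For the symplectic case (5), with $\lieg=\mathfrak s\liep(2n,\bold K)$, $\lieg_{-1}=S^2\bold K^n$, $\lieg_1=S^2\bold K^{n*}$ and $[Z,X]=-ZX$, the type $\mathrm{rk}_{\bold K}(\alpha)=n$ corresponds to an invertible symmetric $Z$, whence $ZX=0$ again gives $X=0$.

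For the conformal cases (3) and (4) I would use the bracket \eqref{conf_bracket_1}. Writing $Z=z^t\mathbb I$ for the metric dual $z$ of $\alpha$, vanishing of the $\mathfrak o(p,q)$-component of $[Z,X]$ unwinds to $Xz^t=zX^t$, forcing the rank-one matrix $Xz^t$ to be symmetric and hence $X$ proportional to $z$; the $\BR$-component $ZX=\langle z,X\rangle=0$ then gives $\lambda\langle z,z\rangle=0$ for $X=\lambda z$. Non-isotropic $\alpha$ means $\langle z,z\rangle\neq 0$, so $X=0$; and when $pq=0$ the form is definite, so every nonzero $\alpha$ is non-isotropic, covering case (3). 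For the spinorial case (6), with $[Z,X]=-ZX$ a product of skew-symmetric matrices, $ZX=0$ means $\mathrm{im}(X)\subseteq\ker Z$. The maximal rank of a skew matrix is $n$ for $n$ even and $n-1$ for $n$ odd, so $\ker Z$ has dimension $0$ or $1$; in the first case $X=0$, and in the second $\mathrm{rk}(X)\leq 1$ combined with the evenness of the rank of a skew-symmetric matrix again forces $X=0$.

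The computations are all elementary, and the main point to get right is the conformal case: the $\mathfrak o(p,q)$-component of \eqref{conf_bracket_1} must be manipulated carefully to extract the clean condition that $X$ be proportional to $z$, and one must correctly match the geometric label ``non-isotropic'' to $\langle z,z\rangle\neq 0$. The parity argument in the odd-dimensional spinorial case is the only other point of mild subtlety. Finally, the rank and rank-size restrictions in the hypotheses ($n\geq 3$ in (5), $n\geq 5$ in (6), $m\geq 3$ in (3)--(4), $2\leq p\leq q$ in (2)) serve only to place us outside the exceptional low-rank isomorphisms and the $D_4$ triality discussed in Section \ref{classification}, under which these structures coincide with conformal or Grassmannian ones already treated; they are irrelevant to the commutant computation itself.
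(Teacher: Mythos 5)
Your proposal is correct and takes essentially the same route as the paper: the paper's own (very terse) proof likewise reduces everything to the criterion that $x_0$ is smoothly isolated iff $C(\alpha)=\{0\}$, checks this case by case from the explicit brackets $\lieg_1\times\lieg_{-1}\rightarrow\lieg_0$ of Section \ref{classification}, and then applies Proposition \ref{Case_isolated_fixed_point}. The only difference is that the paper leaves the linear-algebra verifications as ``easy to see,'' whereas you carry them out explicitly (correctly, including the conformal computation $Xz^t=zX^t\Rightarrow X\in\BR z$ and the even-rank parity argument for odd $n$ in the spinorial case).
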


Note that $(1)$ of Corollary \ref{Cor_isolated_zero} includes projective, almost c-projective, and almost quaternionic structures. For projective structures, this curvature vanishing has been shown in \cite{nagano.ochiai.proj} to hold on a neighborhood of $x_0$.
We can prove the same improvement for almost c-projective structures and almost quaternionic structures.  

\subsubsection{Improvement for almost c-projective structures}
\label{sec.cproj.improvement}

Here we will appeal to some results and techniques of Section 2 of \cite{cap.me.parabolictrans}.
If $\lieg$ has a complex structure, then any semisimple 
element in $\lieg$ acts by $\BC$-linear maps, and has eigenspaces that are complex subspaces. Moreover, for any
$Z\in\lieg_1$ and any $X\in T(Z)$, the corresponding $\algsl_2$-triple $\{X, A, Z\}$  gives rise to a Lie algebra homomorphism
$\algsl(2,\BC)\rightarrow \lieg$, which integrates to a group homomorphism $\phi: SL(2,\BC)\rightarrow G$, for any choice of $G$ with Lie algebra 
$\lieg.$ For any $z,w\in\BC$ the following identity holds in  in $SL(2,\BC)$: 
\begin{equation*}
\left(\begin{array}{cc}
1& z\\
0&1
\end{array}\right)
\left(\begin{array}{cc}
1& 0\\
w&1
\end{array}\right)=
\end{equation*}
\begin{equation}\label{decomposition_SL2}
=\left(\begin{array}{cc}
1& 0\\
w(1+wz)^{-1}&1
\end{array}\right)\left(\begin{array}{cc}
1+wz& 0\\
0&(1+wz)^{-1}
\end{array}\right)\left(\begin{array}{cc}
1& z(1+wz)^{-1}\\
0&1
\end{array}\right).
\end{equation}

Fix $v \in \BC$.  For $s,t \in \BR$, set
\begin{equation*}
\hat{a}_t(s) = 
\left(
\begin{array}{cc}
1 + stv & 0 \\
0 & \frac{1}{1+stv}
\end{array}
\right)
\qquad
\hat{u}_t(s) = 
\left(
\begin{array}{cc}
1 & \frac{t}{1+stv} \\
0 & 1
\end{array}
\right).
\end{equation*}

These paths are smooth in $s$ for all $t \in \BR$ provided $v \notin \BR$, which we will assume below.  
Denote by $a_t(s)$ and $u_t(s)$ the images of these paths in $G_0$ and $P_+$ under the smooth embedding $\phi$, 
and set $p_t(s)= a_t(s) u_t(s)$.  
Setting $z=t$ and $w=sv$ the identity \eqref{decomposition_SL2} implies that the following product decomposition in $G$ holds
\begin{equation}\label{flow_in_model}
e^{tZ}e^{svX}=e^{c_t(s)X}a_t(s)u_t(s)\qquad s,t\in\BR, v\in\BC,
\end{equation}
where $c_t(s) = sv(1+stv)^{-1}$.  Note that $c_t(s) X$ traces a path in the abelian subalgebra $\BC X \subset \lieg_{-1}$.  
It follows that 
$$ \omega_G \left( \dds e^{c_t(s) X} \right) = c_t'(s) X  = \frac{v}{(1+stv)^2} X.$$
Differentiating equation (\ref{flow_in_model}) with respect to $s$ and evaluating with $\omega_G$ shows that
\begin{equation}
\label{eqn.vx.ode}
vX = \Ad p_t(s)^{-1} (c_t'(s) X) + \omega_G (p_t'(s)).
\end{equation}

Suppose now that $(B \stackrel{\pi}{\rightarrow} M,\omega)$ is a real, irreducible, normal, parabolic geometry of type $(\lieg, P)$ where $\lieg$ has a complex structure.
Assume also that $\eta\in\mathfrak{inf}(M)$ has a higher-order zero at $x_0\in M$. Choose $b_0\in\pi^{-1}(x_0)$ to identify the isotropy with $Z\in\lieg_1$. Then the proof of Proposition 2.1 of \cite{cap.me.parabolictrans} 
(see also \cite{fm.nilpconf}) can be modified in this setting as follows.  

The smooth $1$-form $\omega$ trivializes $TB \cong B \times \lieg$.  Consider the family of ODEs expressed in this trivialization as
\begin{eqnarray*}
E_t  \ : \ \beta_t'(s) & = & (\beta_t(s), c_t'(s) X) = \left( \beta_t(s), \frac{v}{(1+stv)^2} X \right) \\
\beta_t(0) & = & b_0
\end{eqnarray*}

Suppose $v\in\BC\setminus \BR$ has non-negative real part. Then on any compact interval $[0,\varepsilon]$ for $\varepsilon>0$, 
we have $|c_t'(s)|\leq |v|$ for all $s\in[0,\epsilon]$ and all $t\in [0,\infty)$. By the Picard--Lindel\"of Theorem (see \cite{teschl.ode.dyn.syst}), there exists $\varepsilon_0>0$ so that for any $t\geq 0$ the unique solution $\beta_t$ of $E_t$ is defined on 
$[0,\varepsilon_0]$. If $\mbox{Re } v <0$, just replace $t$ by $-t$.
Now Equation (\ref{eqn.vx.ode}) implies that in $B$,
\begin{equation}\label{flow_in_B}
\varphi_{\tilde\eta}^t\exp(b_0, svX)=\beta_t(s) a_t(s) u_t(s) \qquad \forall s \in [0,\epsilon_0]
\end{equation}

With respect to any positive definite inner product on $\lieg$, the norms $||c_t'(s) X || \rightarrow 0$ uniformly 
on compacts of $(0,\varepsilon_0]$ as $t \rightarrow \infty$.  In the corresponding smooth Riemannian metric on $TB \cong B \times \lieg$, 
the arc lengths $L(\beta_t) \rightarrow 0$. Therefore $\beta_t(s) \rightarrow b_0$ as $t \rightarrow \infty$, for any fixed $s \in [0,\varepsilon_0]$.
Then, as in Proposition 2.12 of \cite{cap.me.parabolictrans}, $a_t(s)$ is a holonomy path with attractor $b_0$ at $b(s)=\exp(b_0,svX)u_{\infty} (s)^{-1}$, where 
$$ u_{\infty}(s) = \lim_{t \rightarrow \infty} u_t(s) = \phi \left( \begin{array}{cc} 1 &   \frac{1}{sv} \\ 0 & 1 \end{array} \right)$$ 
The eigenspaces of $a_t(s)$ in $\widehat{\Bbb{W}}$ are independent of the choice of nonzero $v$, and so is the question of boundedness of $a_t(s)$ on these spaces.  Then as in Proposition 2.9 of \cite{cap.me.parabolictrans}, whenever $v\in\BC\setminus \BR$,
\begin{equation}
\hat{\kappa}(\exp(b_0, svX)) \in\widehat{\mathbb W}^{[st]}(A)\qquad \forall s.
\end{equation}
When $v \in \BR$, then replacing $s$ by $sv$ in Proposition 2.12 of \cite{cap.me.parabolictrans} and again applying Proposition 2.9 
gives the same stability along $\exp(b_0,sX)$, wherever the curve is defined.  We have proved:

\begin{proposition}\label{prop.complex.multiples}
Let $(M,B, \omega)$ be a real, irreducible, normal, parabolic Cartan geometry modeled on $(\lieg, P)$.  Suppose that $\lieg$ admits the structure of a complex Lie algebra.  
Let $0 \neq \eta \in \mathfrak{inf}(M)$ have higher order zero at $x_0$, and suppose the isotropy with respect to $b_0 \in \pi^{-1}(x_0)$ equals $Z$.  
Then, for any $X \in T(Z)$ and sufficiently small $s$, for $A = [Z,X]$,  
$$ \hat{\kappa}(\exp(b_0,svX)) \in \widehat{\mathbb W}^{[st]}(A)\quad \textrm{ for any } v\in \BC.$$ 
If $\hat{\kappa}(b_0) = 0$, then
$$\hat{\kappa}(\exp(b_0,svX )) \in \widehat{\mathbb W}^{[ss]}(A)\quad \textrm{ for any } v\in \BC$$
\end{proposition}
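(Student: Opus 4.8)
The plan is to deduce both statements from the holonomy-path stability criterion (Proposition 2.9 of \cite{cap.me.parabolictrans}), applied to the $\{\varphi^t_{\tilde\eta}\}$-invariant equivariant function $\hat\kappa$, once a suitable holonomy path with attractor $b_0$ is produced along each complex ray $s \mapsto \exp(b_0, svX)$. The point of taking $\lieg$ complex is that the single real $\algsl(2)$-triple $\{X,A,Z\}$ now generates an $\algsl(2,\BC)$-subalgebra, hence a homomorphism $\phi \colon SL(2,\BC) \to G$, and the matrix identity \eqref{decomposition_SL2} lets us run the comparison with the isotropy flow not only along $sX$ but along every complex multiple $svX$ simultaneously.

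First I would fix $v \in \BC \setminus \BR$ and construct the holonomy path. Setting $z = t$, $w = sv$ in \eqref{decomposition_SL2} and pushing forward by $\phi$ gives the product decomposition \eqref{flow_in_model} in $G$, with $c_t(s) = sv(1+stv)^{-1}$, $a_t(s) \in G_0$, and $u_t(s) \in P_+$; differentiating in $s$ and evaluating with $\omega_G$ yields the transport relation \eqref{eqn.vx.ode}. Transporting this to $B$ via the ODE family $E_t$ in the trivialization $TB \cong B \times \lieg$, I would invoke Picard--Lindel\"of to solve $E_t$ on a fixed interval $[0,\varepsilon_0]$ uniformly in $t \geq 0$, using the bound $|c_t'(s)| \leq |v|$ valid when $\Re v \geq 0$ (replacing $t$ by $-t$ when $\Re v < 0$). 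Equation \eqref{eqn.vx.ode} then upgrades to the flow equation \eqref{flow_in_B} in $B$, and the arc-length estimate $L(\beta_t) \to 0$ forces $\beta_t(s) \to b_0$, which identifies $a_t(s)$ as a holonomy path with attractor $b_0$ at $b(s) = \exp(b_0, svX)\, u_\infty(s)^{-1}$. For the remaining case $v \in \BR$ I would instead reduce directly to Proposition 2.12 of \cite{cap.me.parabolictrans} after replacing $s$ by $sv$.

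With the holonomy path in hand, the conclusion is a diagonalization argument. Because $A = [Z,X] \in \lieg_0$ acts diagonally with integer eigenvalues on the completely reducible $G_0$-module $\widehat{\mathbb W}$, the element $a_t(s) = \phi\,\mathrm{diag}(1+stv,(1+stv)^{-1})$ acts on $\widehat{\mathbb W}^{[\ell]}(A)$ by the scalar $(1+stv)^\ell$; crucially, these eigenspaces are independent of the nonzero choice of $v$. Boundedness of $a_t(s)\cdot\hat\kappa(b(s))$ as $t\to\infty$ then forces $\hat\kappa(\exp(b_0,svX)) \in \widehat{\mathbb W}^{[st]}(A)$, and if in addition $\hat\kappa(b_0)=0$ the attractor value vanishes, upgrading the conclusion to $\widehat{\mathbb W}^{[ss]}(A)$, exactly as in \eqref{tau_along_special_curves}.

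The main obstacle I anticipate is the uniform control of the transported ODE family: one needs the solutions $\beta_t$ to exist on a single interval $[0,\varepsilon_0]$ for all large $t$ and then to converge to $b_0$. Both depend delicately on the hypothesis $v \notin \BR$ (which keeps the paths $\hat a_t(s)$, $\hat u_t(s)$ smooth in $s$ for all real $t$, since $1+stv$ never vanishes) and on the sign of $\Re v$ (which selects the time direction so that $|c_t'(s)|$ stays bounded and the arc lengths decay). Once this uniform attractor behavior is secured, the stability conclusion is immediate from the eigenvalue computation.
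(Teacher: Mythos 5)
Your proposal follows essentially the same route as the paper's own proof: complexifying the $\algsl(2)$-triple to a homomorphism $\phi\colon SL(2,\BC)\to G$, using the identity \eqref{decomposition_SL2} with $z=t$, $w=sv$ to get \eqref{flow_in_model} and \eqref{eqn.vx.ode}, transporting to $B$ via the ODE family $E_t$ with the Picard--Lindel\"of bound $|c_t'(s)|\leq|v|$ for $\Re v\geq 0$, concluding $\beta_t(s)\to b_0$ from the arc-length decay so that $a_t(s)$ is a holonomy path with attractor $b_0$, handling $v\in\BR$ by rescaling in Proposition 2.12 of \cite{cap.me.parabolictrans}, and finishing with the stability criterion of Proposition 2.9 there. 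This matches the paper's argument step for step, including the observation that the eigenspaces of $a_t(s)$ on $\widehat{\mathbb W}$ and the boundedness question are independent of the nonzero $v$.
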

Whenever $\lieg$ is simple, then $\hat{\kappa}(b_0) = 0$ by Theorem \ref{thm.harmonic.vanishing}. 
From Proposition \ref{prop.complex.multiples} we now deduce Theorem \ref{thm.almost.cproj} about almost c-projective structures:

\begin{proof}[Proof of Theorem \ref{thm.almost.cproj}] 
Almost c-projective structures on $M^{2n}, n \geq 2$, are equivalent to real normal parabolic geometries modeled on $(\algsl(n+1,\BC),P)$, where $P$ is the stabilizer of complex line in $\BC^{n+1}$.  The Lie algebra $\lieg=\algsl(n+1,\BC)$ has a complex structure, and there is only one geometric type of strongly essential isotropy (see Subsection \ref{typeA}). By $(1)$ of Corollary \ref{Cor_isolated_zero}, any higher-order zero is necessarily smoothly isolated. 

Suppose $\eta\in\mathfrak{inf}(M)$ and $x_0M$ are as in the theorem, and let $Z$ be the isotropy with respect to $b_0 \in \pi^{-1}(x_0)$.  Then $C(Z)=\{0\}$, which means $\widehat{\mathbb W}^{[ss]}(A)=\{0\}$, as in the discussion at the beginning of Section \ref{sec.isol_zeros}, for any $X \in T(Z)$. Since $\lieg$ is a real simple Lie algebra, $\hat{\kappa}(x_0) = 0$ by Theorem 
\ref{thm.harmonic.vanishing}. So Proposition \ref{prop.complex.multiples} implies that $\hat{\kappa}$ vanishes along all curves of
the form $\pi(\exp(b_0, svX))$ for any $X\in T(Z)$ and $v\in \BC$. 

It remains only to show that these curves fill a dense subset of a 
neighborhood of $x_0$. For any $X\in T(Z)$ the eigenspace decomposition of $\lieg_{-1}$ with respect to $A$ is given by
$$\lieg_{-1}=\lieg_{-1}^{[-2]}\oplus\lieg_{-1}^{[-1]}=\BC X\oplus \ker(Z)$$
and $T(Z)=\{X+Y: Y\in\ker(Z)\}$. Since the set of nonzero complex multiples of $T(Z)$ equals $\lieg_{-1}\setminus~\ker(Z)$, 
it forms a dense open subset of $\lieg_{-1}$. Hence, the curves $\pi(\exp(b_0, svX))$ fill up a dense open subset of a neighborhood of $x_0$,
which implies the claim.
\end{proof}

\subsubsection{Improvement for almost quaternionic structures}

Almost quaternionic structures on $M^{4n}, n \geq 2,$ are reductions of structure group to $G_0=GL(n,\bold{H})\times _{\bold{Z}_2} Sp(1)$, comprising prequaternionic vector space isomorphisms $\bold{H}^n\cong\BR^{4n}$. They correspond to normal parabolic geometries modeled on $(\algsl(n+1,{\bf H}),P)$, where $P$ is the stabilizer of a quaternionic line in ${\bf H}^{n+1}$.  Our arguments are slightly more subtle, since the real Lie algebra $\lieg=\algsl(n+1,\bold{H})$ has no quaternionic structure.  From Section \ref{typeA}, there is only one geometric type of isotropy $\alpha$ of strongly essential flows on these geometries, all with $C(\alpha) = 0$ by $(1)$ of Corollary \ref{Cor_isolated_zero}.  

Suppose now that $(B \stackrel{\pi}{\rightarrow} M,\omega)$ is an irreducible, normal, parabolic geometry of type $(\algsl(n+1,\bold{H}), P)$, and assume $\eta\in\mathfrak{inf}(M)$ 
has a higher-order zero at $x_0\in M$. For a suitable choice of $b_0\in\pi^{-1}(x_0)$, the isotropy of $\eta$ will be the dual $Z$ of the first standard 
basis vector $\textbf{e}_1$ of $\bold{H}^{n}\cong\lieg_{-1}$. From \eqref{Grassmannian_grading}, $T(Z)=\textbf{e}_1+\ker(Z)$.  For any $X\in T(Z)$, the eigenspace decomposition of the action of $A=[Z, X]$ on $\lieg_{-1}$ is
\begin{equation}\label{eigenspace_decom_quatern}
\lieg_{-1}=\lieg_{-1}^{[-2]}\oplus\lieg_{-1}^{[-1]}= X\bold{H}\oplus \ker(Z).
\end{equation}
This shows that although elements of $\lieg_0$ in general do not 
act by ${\bf H}$-linear maps on $\lieg_{-1}$, the semisimple element $A\in\lieg_0$ 
does. The right $\bold{H}$-module spanned by $\{X, A, Z\}$ is thus contained in $\lieg$, and so is the 
$\algsl(2,\bold{H})$ subalgebra they generate. Hence, for any $X \in T(Z)$, the triple $\{X, A, Z\}$ induces a Lie algebra homomorphism
$\algsl(2,\bold{H})\rightarrow \lieg$, which locally integrates to a group homomorphism $\phi: SL(2,\bold{H})\rightarrow G$. 

For $z\in\BR$ and $w\in\bold{H}$ 
the identity \eqref{decomposition_SL2} holds in $SL(2,\bold{H})$. Hence the equality \eqref{flow_in_model} is still valid in $G=PGL(n+1,\bold{H})$ for $v\in \bold{H}$ and $s,t\in\BR$, and reads as
\begin{equation}\label{flow_in_model2}
e^{tZ}e^{Xsv}=e^{Xc_t(s)}a_t(s)u_t(s)\qquad s,t\in\BR, v\in\bold{H},
\end{equation}
where $a_t(s)$ and $u_t(s)$ are defined analogously as in the almost c-projective case.
As before, $X{\bf H}\subset \lieg_{-1}$ is an abelian subalgebra of $\lieg$, so we compute
$$\omega_G \left( \dds e^{Xc_t(s)} \right) =X c_t'(s)  =  X(v(1+st)^{-1} - sv (1+stv)^{-2} tv) = X v(1+stv)^{-2} $$
Define the curves $\beta_t(s)$ in $B$ as before, and note that the corresponding functions in the ODE are given by
$$F_t(y,s) = (y, X v(1+stv)^{-2} ).$$
As for almost c-projective structures, one deduces that for any $v\in\bold{H} \backslash \BR$ there exists $\varepsilon_0>0$ such that the family of curves 
$\beta_t$ is defined on $[0,\varepsilon_0]$ (As above, we restrict to $t \geq 0$ when $\mbox{Re } v > 0$ and $t \leq 0$ when $\mbox{Re } v < 0$).  In $B$,
\begin{equation}\label{flow_in_B_quaternionic}
\varphi_{\tilde\eta}^t\exp(b_0, Xsv)=\beta_t(s) a_t(s) u_t(s)\quad \forall s  \in [0,\varepsilon_0].
\end{equation}
The norms $||X c_t'(s) ||$ in $\lieg$ still tend uniformly to $0$, and we conclude that 
\begin{equation}\label{curvature_along_quaternoinic_curves}
\hat{\kappa}(\exp(b_0,Xsv)) \in \widehat{\Bbb W}^{[st]}(A)
\end{equation}
 for any $v \in {\bf H}$ (since it was previously known for $v \in \BR$ by \cite[Props 2.12, 2.9 ]{cap.me.parabolictrans}).  Theorem \ref{thm.harmonic.vanishing} gives ${\kappa}(b_0) = 0$, which, together with $\widehat{\Bbb{W}}^{[ss]}(A) = 0$ gives vanishing of $\hat{\kappa}$ along  all curves of the form $\pi(\exp(b_0,Xsv))$ for $X\in T(Z)$ and $v\in \bold{H}$. Nonzero quaternionic multiples of $T(Z)$ form an open dense subset 
 of $\lieg_{-1}$ by \eqref{eigenspace_decom_quatern}, so the following theorem is proved:
 
\begin{theorem}\label{thm.almost.quat}
Let $M^{4n}, n \geq 2,$ be a smooth almost quaternionic manifold, and let $0 \neq \eta\in\mathfrak{inf}(M)$ have a higher-order zero at $x_0$. Then there exists a neighborhood $U$ of $x_0$ on which the geometry is locally flat---that is, locally isomorphic to $\bold{PH}^n$ with its standard quaternionic structure. 
\end{theorem}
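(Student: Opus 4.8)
The plan is to follow the same two-stage strategy used for Theorem~\ref{thm.almost.cproj}: first show that the harmonic curvature $\hat\kappa$ vanishes on a dense open subset of a neighborhood of $x_0$, and then promote this to vanishing of the full Cartan curvature $\kappa$ on that neighborhood, which is precisely local flatness. By Section~\ref{typeA}, almost quaternionic structures on $M^{4n}$ are the normal parabolic geometries modeled on $(\algsl(n+1,\bold{H}),P)$ with $P$ the stabilizer of a quaternionic line, and there is a single geometric type of strongly essential isotropy, for which $C(\alpha)=\{0\}$ by $(1)$ of Corollary~\ref{Cor_isolated_zero}. I would fix $b_0\in\pi^{-1}(x_0)$ so that the isotropy is $Z\in\lieg_1$; the smoothly isolated condition then yields, for each $X\in T(Z)$ with $A=[Z,X]$, the eigenspace decomposition $\lieg_{-1}=\lieg_{-1}^{[-2]}\oplus\lieg_{-1}^{[-1]}$, whence $\widehat{\mathbb W}^{[ss]}(A)=\{0\}$ exactly as at the start of Section~\ref{sec.isol_zeros}.

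The crux is to upgrade the stability of $\hat\kappa$ along the real curves $s\mapsto\exp(b_0,sX)$ to stability along the curves $s\mapsto\exp(b_0,Xsv)$ for every quaternionic scalar $v$, mirroring the use of complex scalars in the c-projective argument. The main obstacle is that $\lieg=\algsl(n+1,\bold{H})$ carries no quaternionic structure, so the step that made the c-projective proof work, namely that \emph{every} semisimple element of $\lieg_0$ acts $\BC$-linearly, is unavailable here. The key observation that rescues the argument is the decomposition \eqref{eigenspace_decom_quatern}: although a general element of $\lieg_0$ is not $\bold{H}$-linear, the specific element $A$ is, so the right $\bold{H}$-span of $\{X,A,Z\}$ is a genuine $\algsl(2,\bold{H})$ subalgebra, which integrates to a homomorphism $\phi\colon SL(2,\bold{H})\to G$. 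I would then note that the matrix identity \eqref{decomposition_SL2} still holds over $\bold{H}$, producing the group-level reparametrization \eqref{flow_in_model2}; running the Picard--Lindel\"of and attractor argument on the defining ODEs for the curves $\beta_t(s)$ gives the flow identity \eqref{flow_in_B_quaternionic} in $B$, where the $a_t(s)$ are holonomy paths whose arc lengths shrink to $0$. This establishes \eqref{curvature_along_quaternoinic_curves}, that $\hat\kappa(\exp(b_0,Xsv))\in\widehat{\mathbb W}^{[st]}(A)$ for all $v\in\bold{H}$. Combining this with $\kappa(b_0)=0$ from Theorem~\ref{thm.harmonic.vanishing} and with $\widehat{\mathbb W}^{[ss]}(A)=\{0\}$ forces $\hat\kappa$ to vanish identically along every curve $\pi(\exp(b_0,Xsv))$ for $X\in T(Z)$ and $v\in\bold{H}$.

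To conclude, I would use that $T(Z)=\mathbf{e}_1+\ker(Z)$, so the nonzero quaternionic multiples of $T(Z)$ sweep out $\lieg_{-1}\setminus\ker(Z)$, an open dense subset of $\lieg_{-1}$. Since $\exp(b_0,\cdot)$ is a diffeomorphism from a neighborhood of $0$ in $\lieg_{-1}$ onto a neighborhood of $x_0$, these curves fill a dense open subset of a neighborhood $U$ of $x_0$; smoothness of $\hat\kappa$ then forces $\hat\kappa\equiv 0$ on $U$. For normal irreducible parabolic geometries one has $S(\hat\kappa)=\kappa$ (Section~\ref{sec.def.note}), so $\kappa\equiv 0$ on $U$, and vanishing of the Cartan curvature means $U$ is locally isomorphic to the flat homogeneous model, i.e.\ to $\bold{PH}^n$ with its standard quaternionic structure. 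I expect the only genuinely delicate point to remain the $\bold{H}$-linearity of $A$ and the attendant integration to $SL(2,\bold{H})$; once that is in place the density argument and the passage from $\hat\kappa$ to $\kappa$ are routine.
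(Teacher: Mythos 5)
Your proposal is correct and follows essentially the same route as the paper's own proof: the key observation that the semisimple element $A=[Z,X]$ acts $\bold{H}$-linearly on $\lieg_{-1}$ (via the decomposition \eqref{eigenspace_decom_quatern}), the resulting $\algsl(2,\bold{H})$ subalgebra integrating to $\phi\colon SL(2,\bold{H})\to G$, the quaternionic version of \eqref{decomposition_SL2} feeding into the Picard--Lindel\"of/holonomy argument to obtain \eqref{curvature_along_quaternoinic_curves}, and finally the density of quaternionic multiples of $T(Z)$ in $\lieg_{-1}$ combined with Theorem \ref{thm.harmonic.vanishing} and $\widehat{\mathbb W}^{[ss]}(A)=\{0\}$. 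The only cosmetic difference is that you spell out the final passage from $\hat\kappa\equiv 0$ to $\kappa\equiv 0$ via the operator $S$, which the paper leaves implicit.
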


\subsection{Higher order zeros with maximal strongly fixed component}
\label{sec.max_CZ}
This section concerns the opposite extreme from isolated zeroes, namely,
higher-order zeroes for which the isotropy $\alpha$ has $C(\alpha)$ of maximal possible dimension. 

Let $(B \stackrel{\pi}{\rightarrow} M, \omega)$ be modeled on one of the geometries of Section \ref{classification}.  Suppose $\eta\in\mathfrak{inf}(M)$ has higher-order zero at $x_0 \in M$ with isotropy $Z$ with respect to $b_0 \in \pi^{-1}(x_0)$.  The dimension of $C(Z)$ is maximal if and only if, for $X \in T(Z)$ and $A = [Z,X]$, the dimension of $\lieg_{-1}^{[-2]}$ is minimal. Indeed, recall that $C(Z)\subset\lieg_{-1}$ equals the kernel of $\textrm{ad}(Z):\lieg_{-1}\rightarrow\lieg_0$. For almost all the geometries of Section \ref{classification}, elements of $\lieg_0$ can be identified with pairs of matrices $(A,B)$ such that the bracket $\lieg_1\times\lieg_{-1}\rightarrow\lieg_0$ is given by $[Z,X]=(ZX, -XZ)$.  The exception is in the conformal case, where the second matrix is $- XZ + \Bbb{I} (XZ)^t \Bbb{I}$.  For almost Lagrangean and almost spinorial structures, there is only the second term, $-XZ$. Then for $0 \neq Z\in\lieg_1$, the map $\textrm{ad}(Z):\lieg_{-1}\rightarrow\lieg_0$ has maximal kernel if and only if $Z$ is maximally degenerate---that is, for type $A_n$ or $C_n$, the ${\bf K}$-rank of $Z$ is $1$; for almost spinorial structures, $Z$ has rank $2$; and for pseudo-Riemannian conformal structures, $Z$ is null. (For Riemannian conformal structures, $C(Z)=\{0\}$ for any $Z\in\lieg_1$). From the descriptions of the gradings in Section \ref{classification}, it is easy to see that  
$T(Z)$ comprises the matrices $X\in\lieg_{-1}$ such that $ZX$ is the identity on $\textrm{im}(Z)$ and $XZ$ is the identity on $\textrm{im}(X)$, except in the conformal case 
where the description is a bit different and can be found in \cite{cap.me.proj.conf}. Then for all these geometries, all $X \in T(Z)$ have the same degree of degeneracy as $Z$. Moreover, when $Z\in\lieg_1$ is maximally degenerate, so $C(Z)$ has maximal dimension, then $\lieg_{-1}^{[-2]}$ is $1$-dimensional over $\bold{K}$, for any $A = [Z,X], X \in T(Z)$.  Again, Riemannian conformal structures are an exception; here $T(Z)$ consists of a single element and $A$ is twice the grading element, so $\lieg_{-1}=\lieg_{-1}^{[-2]}$ (see also \cite{cap.me.proj.conf}). Conversely, for all the geometries in Section \ref{classification}, the elements $Z\in\lieg_1$ yielding $\lieg_{-1}^{[-2]}$ of minimal dimension are the maximally degenerate isotropy classes, for which $C(Z)$ is of maximal dimension.

\begin{proposition}\label{C(Z)_max_prop} 
Suppose $(B \stackrel{\pi}{\rightarrow} M, \omega)$ is a normal, irreducible, parabolic geometry modeled on $(\lieg, P)$ with $\lieg$ simple. Let $0 \neq \eta\in\mathfrak{inf}(M)$ with higher-order zero at $x_0 \in M$.  Let $Z$ be the isotropy of $\eta$ with respect to $b_0\in\pi^{-1}(x_0)$.  Assume that $\lieg_{-1}^{[-2]}=\BR X$ for some $X\in T(Z)$. Then there exists an open set $U\subset M$ with $x_0\in\overline U$ 
on which the geometry is locally flat.
\end{proposition}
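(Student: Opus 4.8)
The plan is to produce the open flat set $U$ by transporting the curves of $\mathcal{T}(\alpha)$ along the strongly fixed component of $x_0$. Recall that for a normal irreducible parabolic geometry vanishing of $\hat\kappa$ on an open set forces vanishing of the full curvature $\kappa$ there, so it suffices to exhibit an open $U$ with $x_0\in\overline U$ on which $\hat\kappa\equiv 0$. First I would use that $C(Z)=\lieg_{-1}^{[0]}$ by Proposition \ref{eigenvalues_on_grading_components}(c): the frames $b(W):=\exp(b_0,W)$, for $W$ near $0$ in $C(Z)$, project to a submanifold $N\subset M$ through $x_0$ of dimension $\dim C(Z)$, every point of which is a higher-order zero of $\eta$ with isotropy equal to $Z$ with respect to $b(W)$, by \cite[Prop 2.5]{cap.me.parabolictrans}. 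Thus Theorem \ref{thm.harmonic.vanishing} applies at each $\pi(b(W))$, and $\hat\kappa$ vanishes along every curve $s\mapsto\pi(\exp(b(W),sX'))$ with $X'\in T(Z)$.

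I would then study the smooth map
$$\Phi:C(Z)\times T(Z)\times\BR\to M,\qquad \Phi(W,X',s)=\pi(\exp(b(W),sX')),$$
defined near $(0,X,0)$, whose image is by construction a union of curves along which $\hat\kappa$ vanishes. The goal is to show that $\Phi$ is a local diffeomorphism at the points $(0,X,s)$ for all small $s>0$. I would compute $d\Phi_{(0,X,s)}$ in the eigenspace splitting $\lieg_{-1}=\lieg_{-1}^{[0]}\oplus\lieg_{-1}^{[-1]}\oplus\lieg_{-1}^{[-2]}$ of $T_{x_0}M$ (read off through the frame). To leading order in $s$: differentiating in $W$ returns the inclusion $C(Z)=\lieg_{-1}^{[0]}\hookrightarrow TM$ tangent to $N$; differentiating in $s$ returns a vector close to $X\in\lieg_{-1}^{[-2]}$; and differentiating in $X'$, using $T_XT(Z)=[\lieg_0^{[1]},X]=\lieg_{-1}^{[-1]}$ from Propositions \ref{eigenvalues_on_grading_components}(a) and \ref{description_of_T(Z)}, gives a map that is $O(s)$ but of full rank onto $\lieg_{-1}^{[-1]}$.

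Here the hypothesis $\lieg_{-1}^{[-2]}=\BR X$ is what makes everything fit. Since the three leading blocks land in the complementary summands $\lieg_{-1}^{[0]}$, $\lieg_{-1}^{[-1]}$, $\lieg_{-1}^{[-2]}$, the Jacobian determinant is $\sim c\,s^{\dim\lieg_{-1}^{[-1]}}$ with $c\neq0$, so $d\Phi_{(0,X,s)}$ is an isomorphism for all small $s>0$; note the domain and target have equal dimension $\dim\lieg_{-1}^{[0]}+\dim\lieg_{-1}^{[-1]}+1=\dim\lieg_{-1}$ precisely because $\lieg_{-1}^{[-2]}$ is one-dimensional. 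Hence $\Phi$ is a local diffeomorphism near each point of the arc $\{(0,X,s):0<s<\epsilon\}$, so the image $U$ of a neighbourhood of this arc is open and carries $\hat\kappa\equiv 0$; since $\Phi(0,X,s)=\gamma_X(s)\to x_0$ as $s\to 0^+$ we get $x_0\in\overline U$. Vanishing of $\hat\kappa$ on $U$ then gives $\kappa\equiv 0$, i.e. local flatness.

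The hard part will be the rank computation at $s>0$: at $s=0$ the three parameter families collapse onto the single submanifold $N$ (the $X'$-direction contributes nothing there), so one cannot work at the zero itself and must instead track the $s$-dependence of each block. It is exactly the one-dimensionality of $\lieg_{-1}^{[-2]}$ that allows the lone parameter $s$ to recover the final missing tangent direction while keeping ``$s>0$'' an open, codimension-zero condition; without it the rays $s\mapsto sX$ would only sweep a proper cone and the image need not be open.
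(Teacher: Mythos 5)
Your proposal is correct and follows essentially the same route as the paper: the same map $(Y,X',s)\mapsto\pi(\exp(\exp(b_0,Y),sX'))$, vanishing of $\hat\kappa$ on its image via Theorem \ref{thm.harmonic.vanishing} applied along the strongly fixed set, and surjectivity of the differential at points $(0,X,s)$ for small $s>0$ using Propositions \ref{eigenvalues_on_grading_components} and \ref{description_of_T(Z)} together with the inverse function theorem. The only cosmetic difference is in handling the degeneration at $s=0$: the paper parametrizes the last two factors by the cone $\BR\, T(Z)$, whose tangent space $T_X\BR\, T(Z)=\BR X\oplus\mathrm{ad}(X)\lieg_0^{[1]}$ is constant along rays, whereas you track the $O(s)$ Jacobian block in the $X'$-direction directly; both yield the same conclusion.
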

\begin{proof}
By Proposition 3.5(2a) of \cite{cap.me.parabolictrans}, for a sufficiently small neighborhood $U$ of $0$ in $C(Z)$ the set $N = \pi(\exp(b_0, U))$ 
defines a submanifold of $M$ that consists of higher-order zeroes of $\eta$; moreover, the isotropy of $\eta$ with respect to points of $\exp(b_0,U) \subset \pi^{-1}(N)$ equals $Z$. By Theorem \ref{thm.harmonic.vanishing}, $\hat{\kappa}$ vanishes on $N$ and along the family of curves $\mathcal{T}(\alpha)$ associated to each point in $N$. More precisely, let, where it is defined,
\begin{eqnarray*}
 \widetilde{\psi} & : & C(Z) \times T(Z) \times \BR \rightarrow B \\
\widetilde{\psi} & : & (Y,X,t) \mapsto \exp(\exp(b_0,Y), t X)
\end{eqnarray*}
and let $\psi = \pi \circ \widetilde{\psi}$.  Then Theorem \ref{thm.harmonic.vanishing} implies that  $\hat{\kappa}$ vanishes on the image of $\psi$.  

We wish to show that for any $X\in T(Z)$, there are a neighborhood $U \subseteq C(Z)$ of the origin and $\epsilon^* > 0$ such that $D_{(Y,X,\epsilon)} \psi$ 
is onto $T_{\psi(Y,X, \epsilon)} M$ for any $Y \in U$ and $0 < \epsilon < \epsilon^*$.  Then by the Inverse Function Theorem, the image of $\psi$ contains a 
neighborhood of $\psi(Y,X,\epsilon)$; varying $0 < \epsilon < \epsilon^*$ yields an open set in the image of $\psi$ containing $x_0$ in its closure.  To show surjectivity of $D_{(Y,X,\epsilon)} \psi$, it suffices to show that the image of $\omega \circ D_{(Y,X,\epsilon)} \widetilde{\psi}$ projects onto $\lieg / \liep$.  

First note that, for any $X \in T(Z)$, 
$$\omega \circ D_{(0,X,0)} \widetilde{\psi} (C(Z))  =  \omega \circ D_0(\exp_{b_0})(C(Z)) = C(Z).$$
Therefore, by making $Y$ and $\epsilon$ sufficiently small, we can make $\omega \circ D_{(Y,X,\epsilon)} \widetilde{\psi} (C(Z))$ arbitrarily close to $C(Z)$.  Moreover, 
$$ \omega \circ D_{(0,X,\epsilon)} \widetilde{\psi}(T_{\epsilon X} \BR T(Z)) = D_{\epsilon X}( \log_{\exp(b_0,\epsilon X)} \circ \exp_{b_0} ) (T_{\epsilon X} \BR T(Z)).$$
As $\epsilon \rightarrow 0$, the expression on the right approaches $T_X \BR T(Z)$, since the tangent space to the cone $\BR T(Z)$ is the same along the line $\BR X$.  
Thus for $Y$ and $\epsilon$ sufficiently small, we can make  $\omega \circ D_{(Y,X,\epsilon)} \widetilde{\psi}(T_{\epsilon X} \BR T(Z))$ arbitrarily close to $T_X \BR T(Z)$.   

By the characterization of $C(Z)$ in Proposition \ref{eigenvalues_on_grading_components} (c), we know that 
\begin{eqnarray*}
\lieg_{-1} & = & \BR X \oplus \lieg_{-1}^{[-1]} \oplus \lieg_{-1}^{[0]} \\
& = & \BR X \oplus \lieg_{-1}^{[-1]} \oplus C(Z)
\end{eqnarray*}
for any $X\in T(Z)$. Proposition \ref{eigenvalues_on_grading_components} (a) implies that this direct sum in turn equals
$$ \BR X \oplus \lieg_{-1}^{[-1]} \oplus C(Z) = \BR X \oplus \mbox{ad}(X) \lieg_0^{[1]} \oplus C(Z).$$
Recall from Proposition \ref{description_of_T(Z)} that $\mbox{Ad}(G_0^{[1]}) X = T(Z)$.  Therefore, 
$$T_X T(Z) = \mbox{ad}(\lieg_0^{[1]})X = \mbox{ad}(X)(\lieg_0^{[1]})$$
Now $T_X \BR T(Z) = \BR X \oplus T_X T(Z)$, so we conclude
$$ \lieg_{-1} = T_X \BR T(Z) \oplus C(Z),$$ which implies that $D_{(Y,X,\epsilon)} \psi$ is surjective.
\end{proof}

As a consequence of Proposition \ref{C(Z)_max_prop} and the discussion at the beginning of this subsection, we obtain the following Corollary, 
which in particular recovers some results of \cite{fm.champsconfs}
(see also \cite{cap.me.proj.conf}) on strongly essential Killing fields of pseudo-Riemannian 
conformal structures with null isotropy:

\begin{corollary} \label{Cor_C(Z)_max}
Suppose $(B \stackrel{\pi}{\rightarrow} M,\omega)$ is a normal irreducible
parabolic geometry of type $(\lieg, P)$ with $\lieg$ simple.  Let $0 \neq \eta\in\mathfrak{inf}(M)$ with higher-order zero at $x_0 \in M$, 
with isotropy $Z$ with respect to $b_0\in\pi^{-1}(x_0)$. Then $\lieg_{-1}^{[-2]}=\BR X$ for any $X\in T(Z)$, and the geometry is locally flat on an open set $U\subset M$ such 
that $x_0\in \overline{U}$, in the following cases:
\begin{enumerate}
\item $A_n:$ $\lieg=\algsl(p+q,\bold{R})$, $2\leq p\leq q$, $\liep=\liep(p)$ defined as in Section \ref{typeA} 
and $\mbox{rk}(\alpha) = 1$
\item $B_n$: $\lieg=\mathfrak o(p+1,q+1)$, where $m=p+q\geq 3$ and $pq\neq0$, $\liep$ as in Sections \ref{typeB} and \ref{typeD}, 
and $\alpha$ isotropic
\item $C_n$: $\lieg=\mathfrak s \mathfrak p(2n,\BR)$ for $n\geq 3$, $\liep$ as in Section \ref{typeC} and $\mbox{rk}(\alpha) = 1$.
\item $D_n$: $\lieg=\mathfrak o(n,n)$, $n\geq 5$, $\liep=\lieq$ defined as in Section \ref{typeD} and $\mbox{rk}(\alpha) = 2$. 
\end{enumerate}
\end{corollary}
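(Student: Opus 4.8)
The plan is to reduce the corollary entirely to Proposition \ref{C(Z)_max_prop}, whose single hypothesis is that $\lieg_{-1}^{[-2]}=\BR X$ for some, and hence by Proposition \ref{description_of_T(Z)} any, $X\in T(Z)$. Thus the whole task is to verify, case by case in the explicit matrix models of Section \ref{classification}, that the four listed isotropy classes are exactly those for which the $(-2)$-eigenspace of $A=[Z,X]$ on $\lieg_{-1}$ is one-dimensional \emph{over $\BR$}. Once that is in hand, Proposition \ref{C(Z)_max_prop} directly produces the open set $U$ with $x_0\in\overline U$ on which the geometry is locally flat, which is the remaining assertion of the corollary.

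First I would record the general bookkeeping from Proposition \ref{eigenvalues_on_grading_components}: since $\{X,A,Z\}$ is an $\algsl(2)$-triple with $Z$ of nilpotency degree $3$, the weight $-2$ occurs only in $\lieg_{-1}$ and the weight $+2$ only in $\lieg_1$, and $\textrm{ad}^2(Z)$ identifies $\lieg_{-1}^{[-2]}$ with $\lieg_1^{[2]}$. Hence $\dim_\BR\lieg_{-1}^{[-2]}$ equals the number of $\mathbb V(2)$-summands of $\lieg$ as a module over the $\algsl(2)$ generated by the triple, a number that is minimized precisely when $Z$ is maximally degenerate. The discussion opening Section \ref{sec.max_CZ} already identifies these maximally degenerate classes via the explicit brackets: $\bold{K}$-rank $1$ in types $A_n$ and $C_n$, rank $2$ for almost spinorial ($D_n$) structures, and null vectors for pseudo-Riemannian conformal ($B_n$) structures with $pq\neq 0$. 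These are exactly the four families appearing in the corollary, so in each case $\lieg_{-1}^{[-2]}$ attains its minimal dimension, namely $\dim_{\bold{K}}\lieg_{-1}^{[-2]}=1$.

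The crux is upgrading $\dim_{\bold{K}}\lieg_{-1}^{[-2]}=1$ to $\lieg_{-1}^{[-2]}=\BR X$, and this is precisely where the restriction to $\bold{K}=\BR$ (cases $(1)$, $(3)$, $(4)$) or to real conformal structures (case $(2)$) enters: over $\BR$ a one-dimensional $\bold{K}$-space is a one-dimensional real space, so, since the nonzero $X$ lies in $\lieg_{-1}^{[-2]}$, the eigenspace is forced to be $\BR X$. For type $A_n$ I would carry this out directly from the action $A\cdot Y=-XZY-YZX$ on $Y\in\lieg_{-1}$: combining the eigenvalue equation $A\cdot Y=-2Y$, i.e.\ $XZY+YZX=2Y$, with the defining relation $XZX=X$ for $X\in T(Z)$ and $\textrm{rk}_{\bold{K}}Z=1$ pins $Y$ down to a $\bold{K}$-multiple of $X$, hence a real multiple once $\bold{K}=\BR$. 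Type $C_n$ is identical in spirit using the symmetric-matrix bracket $[Z,X]=-ZX$, and type $D_n$ the same with skew matrices and minimal rank $2$. The conformal case $(2)$ instead requires the twisted bracket \eqref{conf_bracket_1} together with the null condition on $Z$; this is the one computation I expect to be the main technical obstacle, and it can alternatively be extracted from the explicit description of $T(Z)$ for null isotropies in \cite{cap.me.proj.conf}. With $\lieg_{-1}^{[-2]}=\BR X$ established in each case, Proposition \ref{C(Z)_max_prop} finishes the argument. Finally I would remark that the restriction to $\bold{K}=\BR$ is genuine and not cosmetic: over $\BC$ or $\bold{H}$ one only obtains $\lieg_{-1}^{[-2]}=\bold{K}X$, of real dimension $2$ or $4$, so Proposition \ref{C(Z)_max_prop} does not apply, and these cases instead demand the complex- and quaternionic-multiple arguments of Proposition \ref{prop.complex.multiples} and of the analysis preceding Theorem \ref{thm.almost.quat}.
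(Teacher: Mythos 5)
Your proposal is correct and follows essentially the same route as the paper: the paper proves this corollary precisely by combining Proposition \ref{C(Z)_max_prop} with the discussion opening Section \ref{sec.max_CZ}, which identifies the maximally degenerate isotropy classes (rank $1$ for $A_n$/$C_n$, null for conformal, rank $2$ for spinorial) as exactly those for which $\lieg_{-1}^{[-2]}$ is one-dimensional over $\bold{K}$, hence equal to $\BR X$ when $\bold{K}=\BR$. Your explicit eigenvalue computation for type $A_n$ and your closing remark on why $\bold{K}=\BC,\bold{H}$ are excluded merely fill in details the paper leaves as "easy to see," so there is no substantive difference in approach.
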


Putting Corollary \ref{Cor_isolated_zero} and \ref{Cor_C(Z)_max} together yields Theorem \ref{thm.almost.grassmannian} for almost Grassmannian structures of type $(2,n)$, which provides an improvement of Theorem 3.1 of \cite{cap.me.parabolictrans}: 
\begin{proof}[Proof of Theorem \ref{thm.almost.grassmannian}]
Almost Grassmannian structures are equivalent to normal irreducible parabolic geometries of type $(\algsl(n+2,\BR), P(2))$. 
There are only two types of isotropies $\alpha$ of strongly essential infinitesimal automorphisms in this case, namely $\alpha$ 
having rank $1$ or $2$. Hence, the statement of Theorem  \ref{thm.almost.grassmannian}  follows directly from the Corollaries \ref{Cor_isolated_zero} and \ref{Cor_C(Z)_max}. 
\end{proof}

\section{Outlook: Irreducible and higher-graded parabolic geometries}
\label{sec.counterexs}

\subsection{Questions about automorphisms of irreducible parabolic geometries}

An obvious question stemming from our results is:

\begin{question}
For any irreducible parabolic geometry admitting a flow by strongly essential automorphisms, must the curvature vanish on an open set containing the fixed point in its closure?   
\end{question}

We conjecture that the answer is yes.  The shortfall between our techniques and this result is, roughly speaking, the difference between $T_X\BR T(Z)$ for $X \in T(Z)$ and $\lieg_{-1}^{[ss]}$---that is, for a fixed $X \in T(Z)$, there are in general other directions in $\lieg_{-1}^{[-2]}$ along which we are not able to show curvature vanishing.  These curves are, however, contracted under the flow.  A proof of this conjecture will likely require adapting the precise result of Proposition \ref{prop.s.to.the.k} and the ensuing proof of Theorem \ref{thm.harmonic.vanishing} to a coarser dynamical argument.

If the question above has a positive answer, this says that for any irreducible parabolic geometry, on any open set where the curvature does not vanish, any $\eta \in \mathfrak{inf}(M)$ is determined by its 1-jet.  Such a statement is the first step towards finding normal forms for arbitrary flows by automorphisms of these geometries, in the neighborhood of a fixed point (as in \cite{fm.champsconfs}).  A more ambitious question is thus:

\begin{question}
On an irreducible parabolic geometry in which the set of curvature vanishing has empty interior, are there flows by local automorphisms that are not linearizable?
\end{question}

Here we suppose the answer is yes. 

Lastly, we remark that the geometric structures underlying parabolic geometries can be defined with lower regularity: $C^0$ or $C^1$ semi-Riemannian conformal structures, for example, are well known.  These do not, however, determine a Cartan geometry, so our results do not apply to them.  With A. {\v C}ap, we have constructed a $C^1$ deformation of the flat $(2,n)$ Grassmannian model space, which still admits a strongly essential flow, with isotropy a rank one element in $\liep_+ \cong L(\BR^n, \BR^2)$.  We believe we can show this geometry is not locally flat on any nonempty open set, so it should be a counterexample to Theorem \ref{thm.almost.grassmannian} for low regularity $(2,n)$ almost Grassmannian geometries.

 \subsection{Submaximal path geometry of Kruglikov and The}
\label{sec.submaximal.example}

There are nowhere flat higher-graded parabolic geometries admitting a flow by strongly essential automorphisms.  Examples are given by the \emph{non-prolongation rigid} submaximal geometries constructed in \cite{kruglikov.the.submax}.  We briefly describe one family of such examples, due to Casey, Dunajski and Tod \cite{cdt.pair.odes} in the lowest dimension five, and proved submaximal by Kruglikov and The in higher odd dimensions.  The reader is referred to Section 5.3 of \cite{kruglikov.the.submax} for more details and references.  

The underlying manifold $M^{2m+1}$ carries a Cartan geometry modeled on the homogeneous space $\SL(m+2,\BR)/P_{1,2}$, where $P_{1,2}$ is the stabilizer of a flag consisting of a line contained in a plane in $\BR^{m+2}$.  This geometry is 2-graded with $\lieg_0 \cong \BR^2 \oplus \mathfrak{sl}(m,\BR)$.  Note that the flat model fibers over both ${\bf RP}^{m+1} = \SL(m+2,\BR)/P_1$ and the Grassmannian ${\bf Gr_{\BR}}(2,m+1) = \SL(m+2,\BR)/P_2$ of 2-planes in $\BR^{m+2}$.  

For $m \geq 3$, the geometric structure corresponding to these Cartan geometries encodes a system of second-order ODEs with $m$ dependent variables, $\{ x_1(t), \ldots, x_m(t) \}$, modulo point transformations, which are roughly changes of the coordinates $(t, x_1, \ldots, x_m)$.  For $m=2$, there is the same description, provided the homogeneity $1$ component of the harmonic curvature vanishes.  The flat model corresponds to the system $\ddot{x}_i = 0, i = 1, \ldots, m$, which, after compactifying in projective space, has solutions comprising the projective lines.  The model $\SL(m+2,\BR)/P_{1,2}$ is the projectivized tangent bundle of ${\bf RP}^{m+1}$.  

The results of \cite{cdt.pair.odes} and \cite{kruglikov.the.submax} establish the bound $m^2 + 5$ on the dimension of the symmetry algebra of a non-flat $(\SL(m+2,\BR),P_{1,2})$-geometry, and they exhibit a model for which this bound is achieved.  It corresponds to the system of ODEs $\ddot{x}_1 = \cdots = \ddot{x}_{m-1} = 0,$ $\ddot{x}_m = (\dot{x}_1)^3$. The full symmetry algebra acts transitively, with stabilizer isomorphic to $\lieq_1 \oplus \lieq_m$, where $\lieq_1$ and $\lieq_m$ are parabolic subalgebras of $\mathfrak{sl}(2,\BR)$ and $\mathfrak{sl}(m,\BR)$, respectively.  The intersection with $\lieg_0$ is $\BR \oplus \lieq_m$, leaving a 1-dimensional subspace of the stabilizer lying in $\lieg_1$.  We note that this geometry fibers over a $(2,m)$-almost-Grassmannian manifold, where this strongly essential flow becomes inessential.

\subsection{Questions on automorphisms of higher-graded parabolic geometries}

There are rigidity theorems for strongly essential automorphisms of some higher graded parabolic geometries.  For $C^\omega$, integrable CR structures of hypersurface type, Beloshapka \cite{beloshapka.str.ess.cr} and Loboda \cite{loboda.str.ess.cr} proved that strongly essential automorphisms can occur only on locally flat geometries.  Their proof involves rather elaborate calculations with Moser's normal forms, which are Taylor series expansions of automorphisms.  We wonder whether our approach could lead to a different proof of this result, perhaps also valid for $C^\infty$ structures.  Note that for strictly pseudo-convex CR structures---that is, those modeled on $\partial {\bf CH}^n$---there are theorems that any automorphism with a fixed point of a non-flat geometry is linearizable (see \cite{vitushkin.local.schoen}, \cite{kruzhilin.local.schoen}, \cite{cap.me.parabolictrans}).  

In light of the example in Section \ref{sec.submaximal.example} above, it seems that existence of strongly essential flows may not be the appropriate criterion for rigidity theorems in general.  A stronger hypothesis on a flow, which agrees with strongly essential for irreducible parabolic geometries, is having trivial 1-jet---that is, $\varphi^t(x_0) = x_0$ and $D_{x_0}\varphi^t = \mbox{Id}$ for all $t$.  This condition corresponds to the isotropy of the corresponding vector field $\eta$ lying in $\lieg^k \subset \liep_+$, the smallest subspace in the filtration defining $\liep$. 

\begin{question} On a parabolic geometry in which the set of curvature vanishing has empty interior, is any infinitesimal automorphism determined by its 1-jet at a point?
\end{question}

\bibliographystyle{amsplain}
\bibliography{karinsrefs}

\providecommand{\bysame}{\leavevmode\hbox to3em{\hrulefill}\thinspace}
\providecommand{\MR}{\relax\ifhmode\unskip\space\fi MR }
\providecommand{\MRhref}[2]{%
  \href{http://www.ams.org/mathscinet-getitem?mr=#1}{#2}
}
\providecommand{\href}[2]{#2}
\begin{thebibliography}{10}

\bibitem{beloshapka.str.ess.cr}
V.~K. Beloshapka, \emph{On the dimension of the group of automorphisms of an
  analytic hypersurface}, Izv. Akad. Nauk. SSSR Ser. Math. \textbf{14} (1980),
  223--245.

\bibitem{calderbank.diemer}
D.M.J. Calderbank and T.~Diemer, \emph{Differential invariants and curved
  {B}ernstein--{G}elfand--{G}elfand sequences}, J. Reine Angew. Math.
  \textbf{537} (2001), 67--103.

\bibitem{calderbank.diemer.soucek.ricci}
D.M.J. Calderbank, T.~Diemer, and V.~Sou\v{c}ek, \emph{Ricci-corrected
  derivatives and invariant differential operators}, Diff. Geom. Appl.
  \textbf{23} (2005), 149--175.

\bibitem{cald.east.mat.neu}
D.M.J. Calderbank, M.G. Eastwood, V.S. Matveev, and K.~Neusser,
  \emph{C-projective geometry}, in preparation.

\bibitem{cap.me.parabolictrans}
A.~{\v C}ap and K.~Melnick, \emph{Essential {K}illing fields of parabolic
  geometries}, Indiana Univ. Math. J. \textbf{62} (2013), no.~6, 1917--1953.

\bibitem{cap.me.proj.conf}
\bysame, \emph{Essential {K}illing fields of parabolic geometries: projective
  and conformal structures}, Cen. Eur. J. Math. \textbf{11} (2013), no.~12,
  2053--2061.

\bibitem{cap.schichl.equiv}
A.~{\v C}ap and H.~Schichl, \emph{Parabolic geometries and canonical {C}artan
  connections}, Hokkaido Math. J. \textbf{29} (2000), no.~3, 453--505.

\bibitem{cap.slovak.book.vol1}
A.~{\v C}ap and J.~Slov\'ak, \emph{Parabolic geometries {I}}, Mathematical
  Surveys and Monographs, vol. 154, American Mathematical Society, Providence,
  RI, 2009.

\bibitem{cdt.pair.odes}
S.~Casey, M.~Dunajski, and P.~Tod, \emph{Twistor geometry of a pair of second
  order {ODE}s}, Comm. Math. Phys. \textbf{321} (2013), 681--701.

\bibitem{fm.nilpconf}
C.~Frances and K.~Melnick, \emph{Nilpotent groups of conformal flows on compact
  pseudo-{R}iemannian manifolds}, Duke Math. J. \textbf{153} (2010), no.~3,
  511--550.

\bibitem{fm.champsconfs}
\bysame, \emph{Formes normales pour les champs conformes pseudo-riemanniens},
  Bulletin de la Soci\'et\'e Math\'ematique de France \textbf{141} (2013),
  no.~3, 377--421.

\bibitem{Grothendieck.parab.subalgebra}
A.~Grothendieck, \emph{Sur la classification des fibr\'es holomorphes sur la
  sph\`ere de {R}iemann}, Amer. J. of Math. \textbf{79} (1957), no.~1,
  121--138.

\bibitem{Hrdina}
J.~Hrdina, \emph{Almost complex projective structures and their morphisms},
  Arch. Math. \textbf{45} (2009), 255--264.

\bibitem{kostant.sl2.triples}
B.~Kostant, \emph{The principal three-dimensional subgroup and the {B}etti
  numbers of a complex semisimple {L}ie group}, Amer. J. Math. \textbf{81}
  (1959), no.~4, 973--1032.

\bibitem{kostant.harmonic.curv}
\bysame, \emph{Lie algebra cohomology and the generalized {B}orel--{W}eil
  theorem}, Ann. Math. \textbf{74} (1961), 329--287.

\bibitem{kruglikov.the.submax}
B.~Kruglikov and D.~The, \emph{The gap phenomenon in parabolic geometries}, J.
  Reine Angew. Math., doi: 10.1515/crelle-2014-0072.

\bibitem{kruzhilin.local.schoen}
N.~G. Kruzhilin, \emph{Local automorphisms and mappings of smooth strictly
  pseudoconvex hypersurfaces}, Math. USSR Izv. \textbf{26} (1986), no.~3,
  531--552.

\bibitem{loboda.str.ess.cr}
A.~V. Loboda, \emph{On local automorphisms of real-analytic hypersurfaces},
  Izv. Akad. Nauk. SSSR Ser. Math. \textbf{18} (1982), 537--559.

\bibitem{matveev.rosemann}
V.S. Matveev and S.~Rosemann, \emph{Proof of the {Y}ano--{O}bata conjecture for
  holomorph-projective transformations}, J. Diff. Geom. \textbf{92} (2012),
  221--261.

\bibitem{me.frobenius}
K.~Melnick, \emph{A {F}robenius theorem for {C}artan geometries, with
  applications}, L'Ens. Math. S\'er. II \textbf{57} (2011), no.~1-2, 57--89.

\bibitem{me.neusser.parabolic.arxiv}
K.~Melnick and K.~Neusser, \emph{Strongly essential flows on irreducible
  parabolic geometries}, arxiv.org/abs/1410.4647 (version 1).

\bibitem{morimoto}
T.~Morimoto, \emph{Geometric structures on filtered manifolds}, Hokkaido Math.
  J. \textbf{22} (1993), 263--347.

\bibitem{nagano.ochiai.proj}
T.~Nagano and T.~Ochiai, \emph{On compact {R}iemannian manifolds admitting
  essential projective transformations}, J. Fac. Sci. Univ. Tokyo Sect. IA
  Math. \textbf{33} (1986), no.~2, 233--246.

\bibitem{sharpe}
R.W. Sharpe, \emph{Differential geometry : {C}artan's generalization of
  {K}lein's {E}rlangen program}, Springer, New York, 1996.

\bibitem{tanaka.parabolic}
N.~Tanaka, \emph{On the equivalence problem associated with simple graded {L}ie
  algebras}, Hokkaido Math. J. \textbf{8} (1979), 23--84.

\bibitem{teschl.ode.dyn.syst}
G.~Teschl, \emph{Ordinary differential equations and dynamical systems},
  Graduate {S}tudies in {M}athematics, vol. 140, American Mathematical Society,
  Providence, RI, 2012.

\bibitem{vitushkin.local.schoen}
A.~G. Vitushkin, \emph{Holomorphic extension of mappings of compact
  hypersurfaces}, Izv. Akad. Nauk SSSR Ser. Mat. \textbf{20} (1983), 27--33.

\end{thebibliography}

\begin{tabular}{lll}
Karin Melnick  & \quad\qquad & Katharina Neusser \\
Department of Mathematics & \quad\qquad & Mathematical Sciences Institute  \\
University of Maryland & \quad\qquad & Australian National University \\
College Park, MD 20742 &\quad \qquad & Canberra, ACT 0200  \\
USA &\quad \qquad & Australia \\
karin@math.umd.edu &\quad \qquad & katharina.neusser@anu.edu.au\\
\end{tabular}

\end{document}